\documentclass[12pt]{amsart}

\usepackage{amsmath,amsthm,amsfonts,amssymb,csquotes,graphicx, float, caption,wrapfig, mathrsfs, mathtools, enumerate}
\usepackage[hidelinks]{hyperref}
\usepackage{tikz}
\usepackage{tikz-cd}
\usetikzlibrary{cd,calc}
\usepackage{xcolor}
\usepackage{accents}
\usepackage{marvosym}
\usepackage[alphabetic,abbrev,lite]{amsrefs}
\usepackage[american]{babel}
\usepackage[left=1in,top=1in,right=1in]{geometry}
\usepackage[capitalize]{cleveref}
\usepackage{subcaption}
\usepackage[all]{xy}
\usepackage{enumitem}
\usepackage{url,stmaryrd,wasysym}

\newtheorem{mainthm}{Theorem}
\Crefname{mainthm}{Theorem}{Theorems}

\usepackage{chngcntr}
\counterwithin{figure}{section}
\numberwithin{equation}{section}

\newtheorem{lemma}{Lemma}[section]
\newtheorem{theorem}[lemma]{Theorem}
\newtheorem{prop}[lemma]{Proposition}
\newtheorem{cor}[lemma]{Corollary}

\newtheorem*{thmA}{Theorem A}
\newtheorem*{thmB}{Theorem B}
\newtheorem*{thmC}{Theorem C}

\theoremstyle{definition}
\newtheorem{defn}[lemma]{Definition}
\newtheorem{notation}[lemma]{Notation}

\newtheorem{examplex}[lemma]{Example}
\newenvironment{example}
  {\pushQED{\qed}\examplex}
  {\popQED\endexamplex}

\theoremstyle{remark}
\newtheorem{remark}[lemma]{Remark}

\newtheorem*{claim*}{Claim} \newtheorem*{case*}{Case}

\newcommand{\Manoa}{M\=anoa}
\newcommand{\Hawaii}{Hawai\kern.05em`\kern.05em\relax i}

\renewcommand{\geq}{\geqslant}

\newcommand{\<}{\langle}
\renewcommand{\>}{\rangle}
\newcommand{\beq}{\begin{displaymath}}
\newcommand{\eeq}{\end{displaymath}}

\newcommand{\coloneq}{\ensuremath{\mathrel{\mathop :}=}}
\newcommand{\defi}[1]{\textsf{#1}} 

\newcommand{\checkS}{\check{S}}
\newcommand{\checkB}{\check{B}}
\newcommand{\checkG}{\check{G}}
\newcommand{\sXCar}{\sX_{{\mathbb{Q}}\text{-}\mathrm{Car}}}
\newcommand{\sXcan}{\sX_{\operatorname{can}}}
\newcommand{\sXGC}{\sX_{\Gamma, \QCart}}
\newcommand{\sXGamma}{\sX_\Gamma}
\renewcommand{\AA}{\ensuremath{\mathbb{A}}}
\newcommand{\bA}{\mathbb A}
\newcommand{\CC}{\mathbb{C}}

\newcommand{\NN}{\ensuremath{\mathbb{N}}}
\newcommand{\PP}{\mathbb P}
\newcommand{\QQ}{\mathbb Q}
\newcommand{\R}{\mathbb{R}}
\newcommand{\RR}{\ensuremath{\mathbb{R}}}
\newcommand{\ZZ}{\mathbb Z}

\newcommand{\bba}{\ensuremath{\mathbf{a}}}

\newcommand{\bbp}{\ensuremath{\mathbf{p}}}

\newcommand{\bbv}{\ensuremath{\mathbf{v}}}

\newcommand{\boldzero}{\ensuremath{\mathbf{0}}}

\newcommand{\cF}{\ensuremath{\mathcal{F}}}

\newcommand{\cH}{\mathcal H}

\newcommand{\cO}{{\mathcal O}}

\newcommand{\sX}{{\mathcal X}}
\newcommand{\cX}{\mathcal{X}}

\newcommand{\fm}{\mathfrak{m}}
\newcommand{\fp}{I_p}

\newcommand{\tf}{\widetilde{f}}

\newcommand{\tL}{\widetilde{L}}

\newcommand{\tN}{\widetilde{N}}

\newcommand{\tp}{\widetilde{p}}
\newcommand{\tfp}{\widetilde{I_p}}

\newcommand\tsX{\widetilde{\sX}}

\newcommand{\tsigma}{\widetilde{\sigma}}
\newcommand{\tSigma}{\widetilde{\Sigma}}

\newcommand{\wG}{\widehat{G}}
\newcommand{\wcheckG}{\widehat{\checkG}}
\newcommand{\wH}{\widehat{H}}

\DeclareMathOperator{\QCart}{\mathbb{Q}\text{-}\mathrm{Car}}
\DeclareMathOperator{\coker}{coker}

\newcommand{\Amp}{\operatorname{Amp}}

\newcommand{\can}{\mathrm{can}}
\newcommand{\CDiv}{\operatorname{CDiv}}
\newcommand{\ce}{\coloneqq}

\newcommand{\Cl}{\on{Cl}}

\newcommand\ev{\mathrm{ev}}

\newcommand{\Hom}{\operatorname{Hom}}

\newcommand{\id}{\operatorname{id}}

\newcommand\Mod{\operatorname{Mod}}

\newcommand\on{\operatorname}

\newcommand{\Pic}{\operatorname{Pic}}

\newcommand\QCoh{\operatorname{QCoh}}

\newcommand{\SF}{\mathrm{SF}}
\newcommand{\Span}{\operatorname{span}} 
\newcommand{\Spec}{\operatorname{Spec}}

\newcommand{\xra}{\xrightarrow}

\newcommand{\newterm}{\textsf}

\title[Nonsimplicial toric Nullstellensatz and stacky GKZ theory]{Nonsimplicial toric Nullstellensatz \\ and stacky GKZ theory}

\author[C. Berkesch]{Christine Berkesch}
\address{School of Mathematics, University of Minnesota}
\email{cberkesc@umn.edu}

\author[D. Erman]{Daniel Erman}
\address{Department of Mathematics, University of \Hawaii \, at \Manoa}
\email{erman@hawaii.edu}

\author[D. Favero]{David Favero}
\address{School of Mathematics, University of Minnesota}
\email{favero@umn.edu}

\makeatletter
\renewcommand{\@setkeywords}{%
  \hspace*{-\parindent}{\itshape \keywordsname.}\enspace \@keywords\@addpunct.}
\renewcommand{\@setsubjclass}{%
  \hspace*{-\parindent}{\itshape \subjclassname}\enspace \@subjclass\@addpunct.}
\makeatother

\makeatletter
\renewcommand{\@setkeywords}{%
  \hspace*{-\parindent}{\itshape \keywordsname.}\enspace \@keywords\@addpunct.}
\makeatother
\begin{document}

\begin{abstract}
We introduce a variant of the Cox ring using $\QQ$-Cartier divisors and use this to remedy various deficiencies of nonsimplicial toric varieties. 
Our main applications are: 
Cox's ideal-variety correspondence, an explicit classification of subschemes and sheaves in terms of multigraded modules, an associated toric Deligne-Mumford stack, and a stacky extension of the GKZ/Mori theory of toric varieties.
\end{abstract}

\subjclass[2020]{Primary 14M25; Secondary 14A20, 14E30, 14C20, 13A02}
\keywords{Toric variety, Cox ring, Deligne--Mumford stacks, GKZ theory, toric Nullstellensatz, $\QQ$-Cartier divisor, coarse moduli space}

\vspace*{-10mm}
\maketitle

\refstepcounter{section}

\vspace*{-3mm}
\noindent
Projective geometry and graded algebra are classically intertwined.  
Due to pioneering work of Cox \cite{cox:homog} and others~\cite{Mussontori,audin}, there is an enhancement of this algebro-geometric dictionary connecting the geometry of simplicial toric varieties to multigraded algebra.  For example, Cox's toric ideal-variety correspondence \cite{cox:homog}*{Proposition 2.4} directly generalizes Hilbert's Nullstellensatz for $\PP^n$ to simplicial toric varieties.   

\begin{theorem}[Cox's Ideal-Variety Correspondence]
\label{thm:cox}
If $X$ is a \textbf{simplicial} toric variety with Cox ring $S\ce \bigoplus_{d \in \Cl(X)} H^0(X,\cO_X(d))$ and irrelevant ideal $B$, then there is a bijective correspondence between closed subvarieties of $X$ and radical homogeneous ideals  $I \subseteq B \subseteq S$.
\end{theorem}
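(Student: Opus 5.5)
The plan is to use Cox's quotient construction and reduce the statement to the classical Nullstellensatz on the affine space $\Spec S$. Let $\Sigma$ be the fan of $X$ with $n$ rays, so $S=\CC[x_1,\dots,x_n]$ graded by $\Cl(X)$. Let $G=\Hom_\ZZ(\Cl(X),\CC^*)$ be the diagonalizable group acting on $\AA^n=\Spec S$, and set $U=\AA^n\setminus V(B)$. I will take as known Cox's quotient theorem: there is a good categorical quotient $\pi\colon U\to X$, and $\pi$ is a \emph{geometric} quotient exactly when $\Sigma$ is simplicial. Simpliciality enters only through this fact.

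First I match closed subsets of $X$ with $G$-stable closed subsets of $U$. The maps are $Y\mapsto \pi^{-1}(Y)$ and $W\mapsto \pi(W)$. A good quotient sends $G$-stable closed sets to closed sets, so $\pi(W)$ is closed. Because $\pi$ is geometric, its fibers are single orbits. Hence a $G$-stable set is the full preimage of its image, $\pi^{-1}(\pi(W))=W$, and the two maps are mutually inverse. Irreducibility is preserved in both directions: $\pi$ is surjective, and since $G$ is connected up to a finite group, irreducible components of $\pi^{-1}(Y)$ are permuted transitively. So subvarieties correspond to $G$-stable closed sets whose components are mutually permuted.

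Next I pass from $U$ to $\AA^n$ by taking closures, $W\mapsto \overline W$. This is a bijection from $G$-stable closed subsets of $U$ to $G$-stable closed subsets $Z\subseteq\AA^n$ with no irreducible component contained in $V(B)$. The inverse is $Z\mapsto Z\cap U$. Then I apply the Nullstellensatz, $Z\mapsto I(Z)$. A closed subset is $G$-stable if and only if its ideal is $G$-stable. Since $G$ is diagonalizable with character group $\Cl(X)$, $G$-stable ideals are exactly the $\Cl(X)$-homogeneous ideals. The condition that no component lies in $V(B)$ becomes the saturation condition $I=(I:B^\infty)$ on the radical ideal $I$. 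The condition $I\subseteq B$ in the statement should be read in Cox's normalization, where the relevant ideals are those that are $B$-saturated. The empty subvariety corresponds to the ideals with $V(I)\subseteq V(B)$, which this normalization handles separately.

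The main obstacle is the geometric-quotient step: proving that fibers of $\pi$ are single closed orbits and that $\pi$ maps invariant closed sets to closed sets. I would verify this chart by chart. Over each affine open $U_\sigma=\Spec\,(S_{x^{\hat\sigma}})_0$, the preimage is $\Spec S_{x^{\hat\sigma}}$. When $\sigma$ is simplicial, the rays of $\sigma$ are linearly independent. From this I would show that the stabilizers are finite and that every $G$-orbit in $\Spec S_{x^{\hat\sigma}}$ is closed, so the good quotient separates orbits. This is exactly where the argument breaks for nonsimplicial $X$, which is what the rest of the paper addresses.
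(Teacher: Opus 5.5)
Your geometric input matches the paper's. For simplicial $X$ one has $\checkS=S$, $\checkB=B$, $\checkG=G$, and the paper's first proof of \Cref{thm:nullstellensatz-var} runs through the same chain: geometric quotient, then closed subsets of $X$ $\leftrightarrow$ $G$-stable closed subsets of $U$, then the Nullstellensatz with $G$-stable $\leftrightarrow$ homogeneous. Your chart-by-chart sketch of the geometric-quotient property (finite stabilizers, so all orbits are closed) is a standard alternative to the lying-over argument in \Cref{thm:orbits}.

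The gap is in your last step. Taking closures $W\mapsto\overline{W}$ and then ideals gives a bijection onto \emph{$B$-saturated} radical homogeneous ideals. That is not the set of radical homogeneous ideals $I\subseteq B$. Your claim that the condition $I\subseteq B$ ``should be read'' as saturation is wrong: these are genuinely different families, and they appear as separate items in \Cref{prop:alternateNullstellensatz}. For instance, take $\PP^1\times\PP^1$ with $B=\langle x_0,x_1\rangle\cap\langle y_0,y_1\rangle$. The ideal $\langle x_0\rangle$ is radical, homogeneous and $B$-saturated, but it is not contained in $B$. The stated correspondence attaches to the divisor $\{x_0=0\}$ the ideal $\langle x_0\rangle\cap B=\langle x_0y_0,x_0y_1\rangle$ instead. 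Likewise, the empty subvariety corresponds to $B$ itself, not to $S$, and nothing needs to be ``handled separately'': a radical $I\subseteq B$ has $V(I)\supseteq V(B)$, so $V(I)\subseteq V(B)$ already forces $I=B$.

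The fix is a one-line change: use $W\mapsto\overline{W}\cup V(B)$ instead of $W\mapsto\overline{W}$.
\begin{itemize}
\item This map is a bijection from $G$-stable closed subsets of $U$ to $G$-stable closed subsets of $\AA^n$ containing $V(B)$, with inverse $Z\mapsto Z\cap U$.
\item By the Nullstellensatz, the latter correspond to radical ideals $I\subseteq\sqrt{B}=B$. Here $B$ is radical because it is generated by squarefree monomials.
\end{itemize}
This is exactly how the paper concludes. Equivalently, you can keep your bijection and compose it with $J\mapsto J\cap B$, whose inverse is $I\mapsto I:B^\infty$.

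Separately, the statement concerns all closed, reduced, possibly reducible subvarieties, so your irreducibility paragraph is unnecessary. If you did intend irreducible subvarieties only, your final family of ideals would be too large, since you never carry that restriction through to the ideals.
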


For nonsimplicial toric varieties, \Cref{thm:cox} simply fails. 
\begin{example}
\label{ex:motivatingEx}
Consider the nonsimplicial fan $\Sigma$ given by the faces of the unique maximal cone on the rays $(0,0,1), (1,0,1), (0,1,1), (1, 1, 1)$. The Cox ring for $X=X_\Sigma$ in this case is the polynomial ring $S = \CC[a,b,c,d]$ with $\deg(a)=\deg(d)=1$ and $\deg(b)=\deg(c)=-1$, and the irrelevant ideal is $B = \< 1\>$. 
\Cref{thm:cox} breaks down in this case.  For instance, the ideals $\<a,d\>$ and $\<b,c\>$ are distinct, both belong to $B$ (this is trivial because $B$ is the unit ideal), and both correspond to the same subvariety of $X$, namely, the torus-fixed point. 
\end{example}

Geometrically, a semiprojective nonsimplicial toric variety\footnote{A semiprojective toric variety is a normal toric variety that has no torus factors and is projective over its affinization; equivalently, it is the toric variety associated to a full dimensional lattice polyhedron, see \cite{CLSToricVarieties}*{Proposition 7.2.9}.} is no longer a geometric quotient, which has consequences beyond the ideal-variety correspondence. 
The main idea of this paper is that results like Cox's ideal-variety correspondence can be remedied in the nonsimplicial case if the Cox ring $S$ and irrelevant ideal $B$ are replaced by a variant that only considers $\QQ$-Cartier degrees. 

\begin{defn}\label{defn:intCoxAndCartierStack}
For a (normal) semiprojective\footnote{Like the Cox ring, the $\QQ$-Cartier Cox ring can be defined in non-toric settings.  However, we only consider semiprojective toric varieties in this article.
} 
toric variety $X$ over $\CC$, 
write $\QCart(X)\subseteq \Cl(X)$ for the subgroup of $\QQ$-Cartier divisors, i.e. the set of divisors $D$ where some integer multiple $mD$ is a Cartier divisor. 
Define the \defi{$\QQ$-Cartier Cox ring} $\checkS$ as the subring of $\QQ$-Cartier-graded pieces of the Cox ring $S$ of $X$: 
\[
\checkS \ce \bigoplus_{d \in \QCart(X)} H^0(X,\cO_X(d)).
\]
The \defi{$\QQ$-Cartier irrelevant ideal} $\checkB \subseteq \checkS$ is $B\cap \checkS$, where $B$ is the irrelevant ideal in $S$.  By \cite{CLSToricVarieties}*{Proposition 15.2.1(b)}, 
$\checkB = \bigoplus_{d \in \Amp(X)\cap{\QCart}(X)} H^0(X,\cO_X(d))$, 
where $\Amp(X)\subseteq \Cl(X)$ are the ample divisors.
\end{defn}

\begin{remark}
    In the literature, there are two distinct objects that are both sometimes called the Cox ring.  The first involves summing over all Weil divisors and the second involves summing over all Cartier divisors. 
The $\QQ$-Cartier Cox ring sits in between these two.
\end{remark}

In the simplicial case, all divisors are $\QQ$-Cartier \cite{CLSToricVarieties}*{Proposition 4.2.7}, so $\checkS = S$ is the usual Cox ring.  Hence, 
our first result is a direct generalization of Cox's simplicial ideal-variety correspondence from \Cref{thm:cox} to the nonsimplicial case. 

\begin{mainthm}[Toric Ideal-Variety Correspondence]
\label{thm:nullstellensatz-var} 
If $X$ is a semiprojective toric variety, not necessarily simplicial, then there is a bijective correspondence between closed subvarieties of $X$ and radical homogeneous ideals  $I \subseteq \checkB \subseteq \checkS$.
\end{mainthm}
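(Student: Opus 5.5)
Our plan is to realize $X$ as a geometric quotient built from $\checkS$, and then run Cox's original argument for that quotient. Let $G=\Hom(\Cl(X),\CC^*)$ act on $\AA^n=\Spec S$. Set $\checkG=\Hom(\QCart(X),\CC^*)$, which is a quotient of $G$. Since $\checkS=S^{H}$ with $H=\ker(G\to\checkG)$, the variety $\check{Y}\ce\Spec\checkS$ is the affine GIT quotient $\AA^n/\!/H$, and $\checkG$ acts on it. The first step is to identify $X$ with a quotient of $\check{U}\ce \check{Y}\setminus V(\checkB)$ by $\checkG$. Because $X$ is semiprojective, we may choose an ample Cartier class $d$. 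Then $X=\operatorname{Proj}$ of $\bigoplus_k S_{kd}$ over $S_0$. This ring is a Veronese-type subring of $\checkS$ whose irrelevant ideal has radical equal to $\checkB$ restricted to the $\ZZ d$-graded part, using the description of $\checkB$ via $\Amp(X)\cap\QCart(X)$ from \Cref{defn:intCoxAndCartierStack}. Hence $X=\check{U}/\!/\checkG$, and we get a surjective good quotient $\pi\colon \check{U}\to X$.

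The key step is to prove that $\pi$ is in fact a \emph{geometric} quotient, i.e.\ that its fibers are single $\checkG$-orbits. We argue locally on affine charts. For a maximal cone $\sigma$, let $x^{\hat\sigma}$ be the product of the variables not in $\sigma$. Some power $(x^{\hat\sigma})^m$ has Cartier degree, because $\sigma$ is the locus where a Cartier ample divisor is supported off $\sigma$. So $\check{U}$ is covered by the affine opens $\check{U}_\sigma=\Spec \checkS[(x^{\hat\sigma})^{-m}]$, and $U_\sigma=\check{U}_\sigma/\!/\checkG$. The orbit-separation statement amounts to the following. The invariants of $\checkS_{x^{\hat\sigma}}$ under the subtorus of $\checkG$ fixing $x^{\hat\sigma}$ must separate orbits, and the $\checkG$-orbits in $\check{U}_\sigma$ must be closed. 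For closedness, we use the fact that every $\QQ$-Cartier class is trivialized on $U_\sigma$ up to torsion. Consequently, each torus-invariant divisor class $d\in\QCart(X)$ admits a unit of degree $md$ in the localized ring $\checkS_{x^{\hat\sigma}}$. Concretely, $mD$ Cartier means it equals $\mathrm{div}(\chi^{u_\sigma})$ on $U_\sigma$, giving a Laurent monomial of degree $md$ that is regular and invertible on $\check{U}_\sigma$. This makes the degree-$\QCart$ grading "free up to finite index" on $\check{U}_\sigma$, so stabilizers are finite and all orbits have the same dimension, hence all are closed. We expect this step to be the main obstacle, and it is exactly where the passage from $S$ to $\checkS$ matters. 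In $S$, Weil classes that are not $\QQ$-Cartier admit no such units, which produces the non-closed orbits behind \Cref{ex:motivatingEx}.

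With a geometric quotient $\pi\colon\check{U}\to X$ and $\check{U}$ open in $\check{Y}$ with complement $V(\checkB)$, the correspondence follows as in Cox's proof. A closed subvariety $W\subseteq X$ is sent to the ideal of $\checkS$ generated by the homogeneous elements vanishing on $\pi^{-1}(W)$. Conversely, a radical homogeneous $I$ is sent to $\pi(V(I)\cap\check{U})$, which is closed because $\pi$ is a good quotient and $V(I)\cap \check U$ is $\checkG$-stable. Bijectivity uses three facts. First, $\checkG$-stable closed sets of $\check{U}$ correspond to closed sets of $X$, which uses the geometric-quotient property. Second, homogeneous radical ideals correspond to $\checkG$-stable closed subsets of $\check{Y}$; this is the graded Nullstellensatz, valid because $\checkS$ is a finitely generated $\CC$-domain, namely a semigroup ring. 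Third, the condition $I\subseteq\checkB$ is read exactly as in Cox's statement \Cref{thm:cox}: one normalizes by saturating with respect to $\checkB$, so that the radical ideals not containing components of $V(\checkB)$ are picked out.
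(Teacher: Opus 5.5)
Your route to the geometric quotient differs from the paper's and can be made to work, but two statements in it are false as written.

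\textbf{First error: the localizing monomial.} You claim some power $(x^{\hat\sigma})^m$ has Cartier degree, and this fails for nonsimplicial $X$. The class of $x^{\hat\sigma}$ is $[\sum_{\rho\notin\sigma(1)}D_\rho]$, and this divisor need not be $\QQ$-Cartier. Suppose a maximal cone $\tau$ has rays $u_1,\dots,u_4$ with $u_1+u_3=u_2+u_4$ and $\sigma\cap\tau=\RR_{\geq 0}u_1$. This happens, for example, in the face fan of the lattice square antiprism with vertices $(\pm1,\pm1,1)$, $(\pm2,0,-1)$, $(0,\pm2,-1)$. Take $a=(0,1,1,1)$. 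There is no $m\in M_\QQ$ with $\langle m,u_i\rangle=-a_i$, since $a_1+a_3\neq a_2+a_4$. So no power of $x^{\hat\sigma}$ lies in $\checkS$, and your charts $\check U_\sigma$ are undefined.

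Your stated reason (an ample Cartier divisor supported off $\sigma$) is the right one. But such a divisor has \emph{unequal} positive coefficients, and this is exactly what \Cref{lem:checkxsigma} handles. The classes $\deg(x_\rho)$, $\rho\notin\sigma(1)$, span a cone containing the nef cone. Hence some $\prod_{\rho\notin\sigma(1)}x_\rho^{a_\rho}$ with all $a_\rho>0$ has ample degree, and localizing at it gives the same open set.

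With that replacement your key step is correct. You also need the routine check that these monomials generate $\checkB$ up to radical, so the charts cover $\check U$. The argument then runs as follows:
\begin{itemize}
\item On each chart, the degrees of homogeneous units contain $\Pic(X)$, so they form a finite-index subgroup of $\QCart(X)$.
\item Hence stabilizers are finite and every orbit has dimension $\dim\checkG$.
\item Boundaries of orbits consist of lower-dimensional orbits, so every orbit is closed in $\check U$.
\item A good quotient separates disjoint closed invariant sets, so it is geometric.
\end{itemize}
Only closedness is needed; the clause about invariants separating orbits can be dropped. In the Proj step, the identity you actually need is $\sqrt{\langle \checkS_{kd} : k>0\rangle}=\checkB$ as ideals of $\checkS$.

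The paper instead obtains the good quotient chart by chart. It proves transitivity on fibers via an explicit finite, injective ring map $\zeta$ into $\CC[\QCart(X)]$. Your closed-orbit argument is a cleaner packaging, and both rest on the same units of $\QQ$-Cartier degree on each chart.

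\textbf{Second error: the normalization in the final correspondence.} The assignment $W\mapsto I(\pi^{-1}(W))$ produces $\checkB$-saturated radical ideals, and these are generally not contained in $\checkB$. For instance, $W=\varnothing$ gives $\checkS$. For $X=\PP^1\times\PP^1$ and $W$ a point, it gives two linear forms, neither of which lies in $B$.

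Radical ideals $I\subseteq\checkB$ are exactly those with $V(I)\supseteq V(\checkB)$. Saturating, as you propose, produces the other normal form: zero sets with no component inside $V(\checkB)$. The correct assignment is
$W\mapsto I(\pi^{-1}(W))\cap\checkB=I\bigl(\overline{\pi^{-1}(W)}\cup V(\checkB)\bigr)$,
using that $\checkB=B\cap\checkS$ is radical. This is the bijection $Z\mapsto Z\cup V(\checkB)$ in the paper's proof. It is related to your saturated version by the bijection (2)$\leftrightarrow$(3) of \Cref{prop:alternateNullstellensatz}.
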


\begin{example}
    Returning to Example~\ref{ex:motivatingEx}, $X$ has no nonzero $\QQ$-Cartier divisors.  Thus $\checkS = \CC[a,b,c,d]_0 = \CC[ab,ac,bd,cd]$ and $\checkB=\<1\>\subseteq \checkS$. 
    When intersected with $\checkS$, the two distinct ideals $\< a,d\>$ and $\< b,c\>$ from Example~\ref{ex:motivatingEx} both become $\< ab,ac,bd,cd\>$, which is now the unique radical ideal of $\checkS$ corresponding to the torus-invariant point of $X$.
\end{example}

While Theorem~\ref{thm:nullstellensatz-var} uses $\checkS$ to classify subvarieties in the nonsimplicial case, classifying subschemes or coherent sheaves is more nuanced. 
In the smooth case, subschemes and sheaves are in bijection with ideals and modules on the Cox ring $S$, considered up to irrelevant torsion, but this correspondence fails even in the simplicial case. 

\begin{example}\label{ex:P113-1}
Consider the weighted projective variety  $X=\PP_v(1, 1, 3)$, which arises from the simplicial two-dimensional fan with rays $(1,0)$, $(-1,3)$, and $(0,-1)$. The Cox ring and $\QQ$-Cartier Cox ring coincide, so $\checkS = S = \CC[x,y,z]$, where $\deg(x)=\deg(y)=1$ and $\deg(z) = 3$, with irrelevant ideal $\checkB = B=\<x,y,z\>$.  

Consider the ideals $I_1 = \< x,y \>$, $I_2 = \< x^2, xy, y^2 \>$, and $I_3 = \< x^3, x^2y, xy^2,y^3\>$, which all define the same subscheme of $X$. This can be seen directly by looking at the three open affine charts given by inverting the variables: for all $j$, 
$H^0(D(x),I_j) = H^0(D(y),I_j) = 1$ and 
\[
H^0(D(z),I_j) = \left\< \frac{x^3}{z},\, \frac{x^2y}{z},\, \frac{xy^2}{z},\, \frac{y^3}{z} \right\>.
\qedhere
\]
\end{example}

This problem is resolved by introducing certain toric stacks.\footnote{For readers unaccustomed to stacks, \Cref{sec:background} spells out the connection between multigraded commutative algebra and the global quotient stacks of interest in this article.} 

\begin{example}\label{ex:P113-2}
Returning to \Cref{ex:P113-1}, on the weighted projective stack $\PP(1, 1, 3)\ce [\bA^3-\boldzero/\CC^*]$, 
each $I_j$ defines a distinct substack that corresponds to a single point with different scheme structures.  Specifically, each $I_j$ defines a global quotient space $[\Spec (\CC[x,y,z]/I_j )- 0/\CC^*]$, and there are stack isomorphisms 
\begin{align*}
[\Spec (\CC[x,y,z]/I_1 )- \boldzero/\CC^*] & \cong  [p/\ZZ_3], 
\\
[\Spec (\CC[x,y,z]/I_2 )- \boldzero/\CC^*] & \cong  [2p/\ZZ_3], 
\\
[\Spec (\CC[x,y,z]/I_3 )- \boldzero/\CC^*] & \cong  [3p/\ZZ_3], 
\end{align*}
where $p,2p,3p$ are the usual point and the nonreduced points for thickenings of orders one, two, and three, respectively, defined by these ideals.
\end{example}

Define $G \ce \Hom_\ZZ(\Cl(X), \CC^*)$. 
If $X$ is simplicial, as in the weighted projective example above, then the \defi{canonical stack} $\sX_{\can}:=[\Spec(S) - V(B) / G]$ has ``better'' properties with regards to subschemes and sheaves~\cite{BorisovChenSmith2004,GS}.  
Specifically, $\sX_{\can}$ is a global quotient stack whose coarse moduli space is $X$; see \Cref{defn:coarseModuli}. 
The global quotient property allows us to totally classify  sheaves on the stack; they are simply multigraded modules on $S$, considered up to $B$-torsion.  Similarly, ideal sheaves correspond to multigraded ideals, considered up to $B$-torsion; see \Cref{prop:equivSheaves-gradedMods-final}. 
The coarse moduli property then implies that this classification is very closely related to a similar classification for $X$.

The use of $\checkS$ extends this entire picture to the nonsimplicial case. 
Consider the group $\checkG\ce\Hom_\ZZ(\QCart(X),\CC^*)$. 
The $\QQ$-Cartier Cox ring $\checkS$ of $X$ determines a quotient stack, called the \defi{$\QQ$-Cartier stack} of $X$, defined by  
\[
\sXCar \coloneq [\Spec(\checkS) - V(\checkB) / \checkG].
\]

\begin{mainthm}
\label{thm:coarseModuli}
If $X$ is a semiprojective toric variety, not necessarily simplicial, then the following hold. 
\begin{enumerate}
\item The $\QQ$-Cartier stack $\sXCar$ of $X$ is a toric\footnote{A purely toric construction of $\sXCar$ is given in \Cref{sec:construction}.} DM stack with coarse moduli space $X$. 
\item The stack $\sXCar$ is universal for the property that the pullback of any $\QQ$-Cartier divisor is Cartier. 
\item The stack $\sXCar$ is smooth if and only if $X$ is simplicial.
\end{enumerate}
\end{mainthm}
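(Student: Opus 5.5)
I will prove the three parts of \Cref{thm:coarseModuli} by working chart by chart on the affine cover of $X$ by torus-invariant open sets $U_\sigma$, one for each maximal cone $\sigma\in\Sigma$. The basic tool is the standard description of the complement of the irrelevant locus. For each $\sigma$ let $x^{\hat\sigma}=\prod_{\rho\notin\sigma}x_\rho$. Since $X$ is semiprojective, $\checkB$ contains a power of an ample Cartier class. Hence $\Spec(\checkS)-V(\checkB)$ is covered by the loci $\checkS_{f}$, where $f$ runs over suitable $\QQ$-Cartier-degree monomials whose support is exactly the complement of $\sigma$. Concretely, I will take $f$ to be a section of an ample Cartier class $mD$ that vanishes precisely off $U_\sigma$. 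This follows from \cite{CLSToricVarieties}*{Proposition 15.2.1} together with the fact that $\QCart(X)$ has finite index over $\Pic(X)$ modulo torsion considerations.

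\textbf{Local charts.} On each chart I will identify
\[
[\Spec(\checkS_f)/\checkG] \cong [\Spec(\CC[\sigma^\vee\cap M]\otimes \CC[\Gamma_\sigma])/\checkG].
\]
Here $\Gamma_\sigma$ is the lattice of degree-$0$-along-$\sigma$ characters, and the effective action of $\checkG$ reduces to the action of the finite group $\Hom(\QCart(X)/\Pic(U_\sigma\text{-trivial part}),\CC^*)$. More precisely, I will show the following.
\begin{enumerate}
\item $(\checkS_f)_0 = \CC[\sigma^\vee\cap M]$. This follows because $H^0(U_\sigma,\cO(d))$ for $d$ $\QQ$-Cartier is a rank-one free module over $\CC[\sigma^\vee\cap M]$, generated by $\chi^{m_d}$ with $m_d\in M_\QQ$.
\item The stabilizers are finite. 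This holds because each $\QQ$-Cartier class becomes Cartier, hence trivial, on $U_\sigma$ after multiplication by some integer, so $\checkG$ acts on $\Spec\checkS_f$ with a torus factor acting freely and a finite quotient.
\end{enumerate}
From these local descriptions I deduce that $\sXCar$ is DM, that the map to $\Spec(\checkS_f)_0=U_\sigma$ is a good moduli space with finite stabilizers (hence a coarse space), and that these charts glue to give the map $\sXCar\to X$. The torus $T_N$ embeds densely via the locus where all variables are nonzero, which gives toricity. For the gluing I will use the semiprojective hypothesis to obtain the global GIT description $X=\Spec(\checkS)/\!\!/_{\checkB}\checkG$, a Proj-type construction over $\Spec(\checkS_0)$.

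\textbf{Part (3).} Smoothness of $\sXCar$ is equivalent to smoothness of each $\Spec\checkS_f$. By the local description, $\Spec\checkS_f$ is, up to a torus factor, the finite cover $\Spec\bigoplus_{d\in \QCart}H^0(U_\sigma,\cO(d))$. This is the affine toric variety of the cone $\sigma$ with respect to the lattice $N'\subseteq N$ dual to the overlattice $M'$ generated by the $m_d$. A cone is smooth in some lattice only if it is simplicial. Conversely, if $\sigma$ is simplicial, then $\QCart=\Cl$ locally and $M'$ is exactly the lattice making $\sigma$ unimodular. This is the classical statement that the canonical stack is smooth.

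\textbf{Part (2).} For universality, I will first show that the pullback to $\sXCar$ of each $\QQ$-Cartier $D$ is the line bundle attached to the character $d$ of $\checkG$, and hence is Cartier. Next, given $\pi\colon\mathcal Y\to X$ such that every $\QQ$-Cartier pullback is Cartier, the pullbacks assemble into a $\checkG$-torsor on $\mathcal Y$ with compatible sections $\pi^*(x^{\hat\sigma})$-type data, namely a $\QCart$-graded algebra map $\checkS\to\bigoplus_d H^0(\mathcal Y,\pi^*\cO(d))$ with nonvanishing irrelevant locus. This produces the map $\mathcal Y\to\sXCar$, in the style of Cox's functor of points for toric varieties; uniqueness follows from the torsor description. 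I expect the main obstacle to be making this factorization precise, in particular checking that pullbacks of reflexive sheaves are compatible with products, i.e. that $\pi^*\cO(d)\otimes\pi^*\cO(e)\cong\pi^*\cO(d+e)$ for $\QQ$-Cartier $d,e$. This holds because these sheaves are locally free after a multiple, but it needs care on $\mathcal Y$ to produce an honest torsor.
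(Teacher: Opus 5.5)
Your proposal is correct in outline, and for part (1) it takes a genuinely different route from the paper. The paper builds a global toric stack datum $(\tN,N,\tSigma,\beta)$ with $\tN$ dual to $\CDiv^\QQ_{T_N}(X)$ (\Cref{prop:canonicalDMstack-toric}) and notes that $\beta$ is a combinatorial equivalence $\tSigma\cong\Sigma$. It then imports DM-ness from Kuwagaki and the good moduli property from Geraschenko--Satriano together with Alper. The bijection on points comes from the orbit--cone correspondence and a snake-lemma computation (\Cref{prop:combEquiv-implies-coarseModuli}). The paper's non-stacky alternative, \Cref{thm:orbits}, gets transitivity on fibers by showing the ring map $\zeta$ to $\CC[\QCart(X)]$ is finite and injective.

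You instead work on the charts $D(\check x_{\sigma^c})$. There $\checkS_f$ has homogeneous units in every degree of the finite-index subgroup of $\QCart(X)$ that becomes trivial on $U_\sigma$. This gives finite stabilizers, hence DM in characteristic zero and all orbits closed. So the good moduli space $[\Spec\checkS_f/\checkG]\to\Spec(\checkS_f)_0=U_\sigma$ is coarse, and these charts glue. Your route is more self-contained (no Kuwagaki or Geraschenko--Satriano) and yields explicit orbifold charts. The paper's global datum instead gives the combinatorial-equivalence criterion that it reuses for the GKZ stacks $\sX_\Gamma$ in \Cref{thm:mainGKZstacky}.

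Parts (2) and (3) follow the paper's approach. In (2), your check that the map misses $V(\checkB)$ is more direct than the paper's detour through $X\cong\sX_{\Pic}$: $\pi^*\check x_{\sigma^c}$ is a section of a Cartier class and is nowhere zero over $U_\sigma$. The compatibility $\pi^*\cO(d)\otimes\pi^*\cO(e)\cong\pi^*\cO(d+e)$ that you flag is assumed without comment in \Cref{lem:torsor}. It is cleanest to read the hypothesis as supplying a compatible system of line bundles with sections, as in Cox's functor of points.

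Several intermediate claims in your local analysis are false and should be repaired, though none is load-bearing.

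\textbf{Freeness.} $H^0(U_\sigma,\cO(d))$ is not free of rank one when $d$ is not Cartier on $U_\sigma$. It is spanned by the $\chi^m$ with $m\in M\cap(m_D+\sigma^\vee)$, where $m_D\in M_\QQ$ is the local datum of $D$. For the $A_1$ cone on $(1,0),(1,2)$ with $D=D_1$, it needs the two generators $\chi^{(0,0)}$ and $\chi^{(-1,1)}$. Also, $\chi^{m_d}$ with $m_d\notin M$ is not a function. For $(\checkS_f)_0=\CC[\sigma^\vee\cap M]$ you only need $d=0$, which is \Cref{lem:checkxsigma}.

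\textbf{Chart identification.} Correspondingly, the correct chart is $\checkS_f\cong\CC[\sigma^\vee\cap M']\otimes\CC[\Gamma_\sigma]$. Here $M'\supseteq M$ is the lattice of the $m_D$ for $D\in\CDiv^\QQ_{T_N}(X)$, and $\Gamma_\sigma$ consists of the $\QQ$-Cartier $T_N$-divisors supported off $\sigma(1)$. The stacky chart is then $[U_{\sigma,N'}/\mu_\sigma]$, with $N'=(M')^\vee$ and $\mu_\sigma=\Hom(\QCart(X)/\Gamma_\sigma,\CC^*)$ finite. With $M$ in place of $M'$ the chart would be $U_\sigma$ times a torus. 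That already fails for $\PP(1,1,2)$ at $z\neq0$, where $\checkS_z=\CC[x,y,z^{\pm1}]$ is smooth. This corrected form is what your part (3) actually uses.

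\textbf{Converse in (3).} The converse must use global simpliciality (so that $\checkS=S$), as your last sentence does, not merely simpliciality of $\sigma$. Since $M'$ depends on all of $\QCart(X)$, a simplicial cone of a nonsimplicial fan can still give a singular chart.
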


We show in Section~\ref{sec:coarseModuli} that 
\Cref{thm:coarseModuli} implies 
\Cref{thm:nullstellensatz-var}
this consequence. We also provide a proof of \Cref{thm:nullstellensatz-var} in Section~\ref{sec:varProof} that is independent of stacky language. 

In addition, \Cref{thm:coarseModuli} allows one to classify quasi-coherent sheaves (and ideal sheaves) on $\sXCar$ and $X$ in concrete terms.  
In Cox's paper, this is done via \cite{cox:homog}*{Theorem 3.7}, where he shows that every quasi-coherent sheaf on a simplicial toric variety $X$ has the form $\widetilde{M}$ and that $\widetilde{M}=0$ if and only if the following somewhat technical statement holds: for every homogeneous element $m \in M_\alpha$ where $ \deg(m)=\alpha$ lies in $\Pic(X)$, the element $m$ is annihilated by a power of $B$. 
We extend to the nonsimplicial case in a couple of ways.  First, using $\checkS$ and the $\QQ$-Cartier stack, we prove a similar result in \Cref{thm:nullstellensatz-stack}. 
Second, we reinterpret Cox's technical condition in terms of a third Cox ring/stack perspective, defined below; a precise analogue of Cox's theorem appears in \Cref{cor:CoxQcoh}. 

\begin{defn}\label{defn:PicCox}
Define $S_{\Pic} \ce \bigoplus_{d \in \Pic(X)} H^0(X,\cO_X(d))$ with irrelevant ideal $B_{\Pic} \ce B\cap S_{\Pic}$, group $G_{\Pic}:=\Hom_\ZZ(\Pic(X), \CC^*)$, and stack $\sX_{\Pic}:=[\Spec(S_{\Pic})- V(B_{\Pic})/G_{\Pic}]$. 
\end{defn}

In the simplicial case, $\checkS$ is the Cox ring itself and $S_{\Pic}$ takes the flavor of a Veronese subring of $\checkS$; see Section~\ref{sec:picard}. 
The relationship between the three various Cox rings of the toric variety $X$ is
$
S_{\Pic} \subseteq \checkS \subseteq S
$, 
induced by the corresponding inclusions among Cartier divisors, $\QQ$-Cartier divisors, and Weil divisors.  The relationship among the stacks and $X$ is a sequence of morphisms:
\[
\sXcan \xrightarrow{\nu} \sXCar \xrightarrow{\pi} \sX_{\Pic} \xrightarrow{\psi} X.
\]
We will show in \Cref{thm:Pic-iso} that $\psi$ is an isomorphism and thus \Cref{thm:coarseModuli} shows that $\pi$ is always a coarse moduli map. 
Furthermore, $X$ is simplicial if and only if $\nu$ is the identity map \cite{CLSToricVarieties}*{Proposition 4.2.7} and $X$ is smooth if and only if $\pi$ is the identity map \cite{CLSToricVarieties}*{Proposition 4.2.6}. 
The properties of $\nu$, $\pi$, and $\psi$ provide a geometric explanation for many results in this paper.  For instance, the failure of $\nu$ to be a good moduli morphism in the nonsimplicial case ``explains" the failures of \Cref{thm:cox} in the nonsimplicial case, whereas the properties of $\pi$ explain why passing to $\checkS$ and $\sXCar$ remedy these issues. 

\medskip

Our primary motivation for rectifying problems arising in the study of nonsimplicial toric varieties is that they appear abundantly and naturally when considering birational toric morphisms and GKZ theory.
Recall that the \defi{GKZ fan} of a semiprojective toric variety $X$, denoted $\Sigma_{GKZ}(X)$, pa\-ram\-e\-trizes the birational geometry of $X$ in the following sense: to each cone $\Gamma = (\overline \Sigma_\Gamma, I_\Gamma)$ in the GKZ fan, there is an associated toric variety $X_\Gamma \ce X_{\Sigma_\Gamma}$ (where $\Sigma_\Gamma$ is the fan associated to the generalized fan $\overline \Sigma_\Gamma$), and for each containment $\Gamma' \subseteq \Gamma$, there is a morphism of toric varieties $X_{\Gamma}\to X_{\Gamma'}$. 
That is, this association between cones in the GKZ fan and toric varieties is functorial.  See \cite{CLSToricVarieties}*{Chapter 14} for a review.

This functoriality fails wildly if these toric varieties are replaced with their associated canonical stacks (see \Cref{ex:P113-H3-GKZcan}). 
It still fails if the canonical stack is replaced with $\sXCar$, 
when $\overline{\Sigma}_\Gamma$ does not use all rays in the original GKZ problem. 
We restore this functoriality by defining a minor variant $\sX_{\Gamma}$ of $\sXCar$ that incorporates 
the data of the extraneous rays; see \Cref{defn:goodStacky}.  
The resulting theorem can be viewed as a lifting of GKZ theory to stacks.

\begin{mainthm}
\label{thm:mainGKZstacky}
For each cone $\Gamma$ in $\Sigma_{GKZ}$,  there is a toric DM stack $\sXGamma$ with a coarse moduli space $X_\Gamma$ such that, for any subcone $\Gamma' \subseteq \Gamma$, there is a functorial morphism that commutes with the usual functoriality of GKZ theory:
\[
\begin{tikzcd}
\cX_{\Gamma} \ar[r] \ar[d] & \cX_{\Gamma'} \ar[d] \\
X_{\Gamma} \ar[r] & X_{\Gamma'}.
\end{tikzcd}
\]
That is, there is a diagram $\cF$ from the poset category of cones in $\Sigma_{GKZ}$  to DM stacks and a natural transformation of diagrams 
\[
\begin{tikzcd}
\Sigma_{\text{GKZ}} \ar[r, "{\cF}"] \ar[dr, "F"'] & \{\text{DM stacks}\} \ar[d, "\text{coarse moduli}"] \\
& \{\text{Varieties}\},
\end{tikzcd}
\]
where $F$ is the usual diagram in varieties.
Further, $\sX_{\Gamma}$ lies in between $X_{\Gamma}$ and the $\QQ$-Cartier stack $\sXGC$ of $X_{\Gamma}$ via finite maps
$\sXGC \to \sXGamma \to X_{\Gamma}$, 
and when $\Gamma$ lies within the moving cone, there is an equality 
$\sX_{\Gamma}=\sXGC$.  
\end{mainthm}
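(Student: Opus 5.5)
We need to propose a proof of Theorem C (the stacky GKZ statement). The plan is to build $\sX_\Gamma$ explicitly as a quotient stack, construct the transition maps as induced maps of quotient stacks, and check compatibility with the coarse moduli maps of \Cref{thm:coarseModuli}.

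\textbf{Setup.} Fix the GKZ data: the rays $\rho_1,\dots,\rho_r$ of the original problem, the polynomial ring $S=\CC[x_1,\dots,x_r]$ graded by $\Cl=\ZZ^r/M$, and $G=\Hom(\Cl,\CC^*)$. A cone $\Gamma=(\overline\Sigma_\Gamma,I_\Gamma)$ gives a fan $\Sigma_\Gamma$ using only the rays not in $I_\Gamma$; the rays in $I_\Gamma$ are the extraneous ones.

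\textbf{Definition of $\sX_\Gamma$.} Take the subgroup $L_\Gamma\subseteq\Cl$ generated by two kinds of classes:
\begin{itemize}
\item the classes $[D_i]$ for $i\in I_\Gamma$;
\item lifts of $\QCart(X_\Gamma)$, via the splitting $\Cl\cong\Cl(X_\Gamma)\oplus\ZZ^{I_\Gamma}$.
\end{itemize}
Then set
\[
\sX_\Gamma=[\Spec(S_{L_\Gamma})-V(B_\Gamma\cap S_{L_\Gamma})/\Hom(L_\Gamma,\CC^*)],
\]
where $S_{L_\Gamma}$ is the Veronese-type subring of degrees in $L_\Gamma$. Equivalently, $\sX_\Gamma$ is the $\QQ$-Cartier construction carried out on the generalized fan, with the variables $x_i$, $i\in I_\Gamma$, inverted or retained as appropriate.

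\textbf{Coarse moduli.} I would show that $\sX_\Gamma$ is DM with coarse space $X_\Gamma$ by reducing to \Cref{thm:coarseModuli}(1). Localizing at the extraneous variables gives $S_{L_\Gamma}[x_i^{-1}:i\in I_\Gamma]$. Taking the degree-$[D_i]$ part of the quotient kills those variables, so $\sX_\Gamma\cong[\Spec(\check S_\Gamma)-V(\check B_\Gamma)/G']$, where $G'$ is an extension of $\checkG_\Gamma$ by a finite group coming from torsion in $L_\Gamma/\QCart$. From this description one reads off the finite maps $\sXGC\to\sX_\Gamma\to X_\Gamma$. When $\Gamma$ lies in the moving cone, $I_\Gamma=\emptyset$, so $L_\Gamma$ lifts exactly $\QCart(X_\Gamma)$ and $\sX_\Gamma=\sXGC$.

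\textbf{Functoriality.} For $\Gamma'\subseteq\Gamma$, the map $X_\Gamma\to X_{\Gamma'}$ comes from $\Sigma_\Gamma$ refining $\Sigma_{\Gamma'}$ and from $I_{\Gamma'}\subseteq I_\Gamma$ (up to the usual convention). I would first prove the key lemma
\[
L_{\Gamma'}\subseteq L_\Gamma .
\]
The reason is that a $\QQ$-Cartier divisor on $X_{\Gamma'}$ pulls back along the toric morphism to a $\QQ$-Cartier divisor on $X_\Gamma$, and its difference from the strict-transform class is supported on extraneous or exceptional rays, all of which lie in $L_\Gamma$. Given the lemma, the inclusion $S_{L_{\Gamma'}}\subseteq S_{L_\Gamma}$ together with the restriction $\Hom(L_\Gamma,\CC^*)\to\Hom(L_{\Gamma'},\CC^*)$ gives an equivariant map. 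To get a stack morphism, I need the irrelevant loci to match:
\[
V(B_\Gamma)\supseteq \text{preimage of } V(B_{\Gamma'}).
\]
This should follow from the GKZ fact $\operatorname{rad}(B_{\Gamma'})\subseteq\operatorname{rad}(B_\Gamma)$ after the appropriate restriction, since every cone of $\Sigma_\Gamma$ lies in a cone of $\Sigma_{\Gamma'}$. Both composition and identity are then visibly respected, so $\cF$ is a functor. The naturality square commutes because coarse moduli maps are universal for maps to algebraic spaces: both composites $\sX_\Gamma\to X_{\Gamma'}$ are computed by degree-zero invariants and agree on the dense torus, hence everywhere since $X_{\Gamma'}$ is separated.

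\textbf{Main obstacle.} I expect the hardest step to be the lemma $L_{\Gamma'}\subseteq L_\Gamma$, specifically the case where $\Gamma'$ lies on the boundary of the moving cone. There $X_\Gamma\to X_{\Gamma'}$ contracts divisors, and the contracted rays become extraneous, moving into $I_{\Gamma'}$. This is exactly why these rays must be built into $L$, and it is what goes wrong for the canonical stacks. I would handle it by working with piecewise-linear support functions: $D$ is $\QQ$-Cartier on $X_{\Gamma'}$ if and only if some multiple has a support function linear on each cone of $\Sigma_{\Gamma'}$. That function stays linear on the finer cones of $\Sigma_\Gamma$, and it determines the coefficients on the extraneous rays, which lie in $L_\Gamma$ by construction.
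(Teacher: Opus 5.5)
Your proposal has a genuine gap: it never accounts for the rays of the GKZ problem that are neither rays of $\Sigma_\Gamma$ nor in $I_\Gamma$. These form the paper's set $J_\Gamma$, namely rays lying in the support of $\overline\Sigma_\Gamma$ without being rays of it. Your definition of $L_\Gamma$ rests on a splitting $\Cl\cong\Cl(X_\Gamma)\oplus\ZZ^{I_\Gamma}$, and this exists only when $J_\Gamma=\emptyset$ and $\overline\Sigma_\Gamma$ has no lineality.

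Take the wall $\Gamma=\RR_{\geq 0}(E+rF)$ between the $\cH_r$ and $\PP(1,1,r)$ chambers of \Cref{ex:P113-H3-GKZcan}, with $E$ the $(-r)$-curve and $F$ a fiber. There $\Cl\cong\ZZ^2$, $\Cl(X_\Gamma)=\Cl(\PP_v(1,1,r))\cong\ZZ$, $I_\Gamma=\emptyset$, and the ray of $E$ lies in $J_\Gamma$. Contracted rays go into $J_{\Gamma'}$, not into $I_{\Gamma'}$ as you claim; the latter would contradict $I_{\Gamma'}\subseteq I_\Gamma$.

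No natural lift rescues the definition. The pullback of the generator $H$ of $\QCart(X_\Gamma)$ is $F+\frac{1}{r}E\notin\Cl$. The strict transform $H\mapsto F$ gives $L_\Gamma=\ZZ F$, so $S_{L_\Gamma}$ is the polynomial ring in the two fiber variables and the resulting stack is $\PP^1$.

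Your coarse-moduli step, which makes $\sX_\Gamma$ a finite gerbe over $\sXGC$, is fatal here. Its natural map goes $\sX_\Gamma\to\sXGC$, the wrong direction. Worse, any morphism $\cH_r\to\sX_\Gamma$ over $\PP_v(1,1,r)$ would compose to a lift $\cH_r\to\PP(1,1,r)$ of the contraction, which \Cref{ex:P113-H3-GKZcan} rules out. So functoriality fails exactly at divisorial and fibration faces. When $J_\Gamma=\emptyset$ and there is no lineality (chambers, walls of small contractions), your $L_\Gamma$ does agree with the paper's and $\sX_\Gamma=\sXGC$. But those are precisely the cases where nothing new is needed.

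The missing idea is that the degree group must be $\wG\cap\Span_\RR\Gamma$. These are the classes whose support function is linear on the cones of $\overline\Sigma_\Gamma$ and integral at every ray outside $I_\Gamma$, including the rays in $J_\Gamma$. The paper encodes this dually as $\tN_\Gamma=\ZZ^n/\tL_\Gamma$, where $\tL_\Gamma$ is the saturated lattice of relations among the rays of each cone of $\Sigma_\Gamma$ together with the $J_\Gamma$ rays. It then takes $\sX_\Gamma$ to be the toric stack of $(\tN_\Gamma,N_\Gamma,\tSigma_\Gamma,\beta_\Gamma)$, identified with $[\Spec(S_\Gamma)-V(B_\Gamma)/G_\Gamma]$ in \Cref{thm:quotientRealization}.

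The extra integrality at $J_\Gamma$ means that, modulo the $I_\Gamma$ classes, this degree group is a finite-index \emph{sub}group of $\QCart(X_\Gamma)$ rather than an extension of it. For the wall above it is $\ZZ(E+rF)$, corresponding to $rH$, and $\sX_\Gamma=\PP_v(1,1,r)$ as in \Cref{fig:P113-again}. Consequently $\tN\to\tN_\Gamma$ has finite cokernel, which gives the finite map $\sXGC\to\sX_\Gamma$ in the correct direction.

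From there the rest is formal. DM and coarse moduli follow because $\beta_\Gamma$ is a combinatorial equivalence (\Cref{prop:combEquiv-implies-coarseModuli}). Functoriality is just $\tL_\Gamma\subseteq\tL_{\Gamma'}$ (dually $\Span\Gamma'\subseteq\Span\Gamma$) together with $\tSigma_\Gamma$ refining $\tSigma_{\Gamma'}$. So the lemma you flagged as hardest becomes trivial.

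Two smaller points:
\begin{itemize}
\item The GIT containment you need is $\sqrt{B_\Gamma}\subseteq\sqrt{B_{\Gamma'}}$, since semistable loci grow on faces. This is the reverse of what you wrote, although your displayed condition on vanishing loci is correct.
\item The wall above lies in the closed moving cone with $I_\Gamma=\emptyset$, yet functoriality forces $\sX_\Gamma\neq\sXGC$ there. So the moving-cone equality cannot be derived from $I_\Gamma=\emptyset$ alone; it also needs $J_\Gamma=\emptyset$.
\end{itemize}
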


Finally, in \cite{ballard2024king}, a smooth toric DM stack $\widetilde \sX$ is constructed that lies over all of the $\sXGamma$ for each chamber $\Gamma$ in $\Sigma_{GKZ}$, which played a key role in the main results.  Our stacky GKZ diagram is readily compliant with this construction.  
That is, we show in \Cref{prop:DCox} that  functoriality extends to the common stacky refinement $\tsX$ defined in \cite{ballard2024king}*{Section~3.1}.

\smallskip
\subsection*{Outline}
\Cref{sec:varProof} provides a first proof of \Cref{thm:nullstellensatz-var}. 
Turning to stacks, a short introduction and motivation is available in \Cref{sec:background}, followed by an explicit alternate construction of the $\QQ$-Cartier stack $\sXCar$ in \Cref{sec:construction}. 
\Cref{sec:coarseModuli} proves \Cref{thm:coarseModuli}, as well as a stacky proof of \Cref{thm:nullstellensatz-var}. 
Another coarse moduli result is provided in \Cref{sec:picard} through the restriction of the Cox ring to its Picard degrees. 
Finally, \Cref{sec:GKZ} addresses GKZ theory, including the proof of \Cref{thm:mainGKZstacky}. 

\section{First proof of toric Nullstellensatz}
\label{sec:varProof}

In this section, we provide a direct proof of \Cref{thm:nullstellensatz-var}.  
A proof that uses toric stacks will appear in \Cref{sec:coarseModuli}. 

In \cite{cox:homog} and \cite{CLSToricVarieties}*{Chapter 5}, the Cox ring and its relationship to an ideal-variety and module-sheaf correspondence for toric varieties are developed.  
Those results rely on expressions of open subvarieties of $X=X_\Sigma$ as affine varieties with respect to localizations at monomials.  Namely, for any maximal cone $\sigma\in\Sigma_{\max}$, there is an affine toric variety $U_{\sigma}$ (defined by the subfan of $\Sigma$ induced by $\sigma$) and a distinguished squarefree monomial $x_{\sigma^c}\ce \prod_{\rho\notin\sigma(1)}x_\rho$, so that $\Spec(S[x_{\sigma^c}^{-1}]_0) = U_\sigma$~\cite{cox:homog}*{Lemma 2.2}.

Since the monomial $x_{\sigma^c}$ may not lie in $\checkS$, the localization $\checkS[x_{\sigma^c}^{-1}]_0$ might not make sense.  
However, localizing at a monomial like $x_0x_1$ is the same as localizing at $x_0^ax_1^b$ for any $a,b>0$.  
Thus, it is enough to find a monomial $\check{x}_{\sigma^c}$ which lies in $\checkS$ and where $x_i$ divides $x_{\sigma^c}$ if and only if $x_i$ divides $\check{x}_{\sigma^c}$, i.e. the squarefree part of  $\check{x}_{\sigma^c}$ equals $x_{\sigma^c}$.
The following lemma shows that this is always possible, allowing us to provide analogues of Cox's constructions for the $\QQ$-Cartier Cox ring. 

\begin{lemma}\label{lem:checkxsigma}
If $X$ is a semiprojective toric variety with fan $\Sigma$ and $\checkS$ as in \Cref{defn:intCoxAndCartierStack}, then there exists a monomial
$
\check{x}_{\sigma^c} = \prod_{\rho \notin \sigma(1)} x_{\rho}^{a_\rho}
$
satisfying the following properties: 
\begin{enumerate}
    \item  $a_\rho > 0$ for all $\rho$, and
    \item  $\check{x}_{\sigma^c}$ lies in the subring $\checkS$.
\end{enumerate}
Moreover, for any such choice, 
$\checkS[\check{x}_{\sigma^c}^{-1}]_0 = S[x_{\sigma^c}^{-1}]_0$, 
and hence 
\[
\Spec\left( \checkS[\check{x}_{\sigma^c}^{-1}]_0\right)
=
\Spec\left(S[x_{\sigma^c}^{-1}]_0 \right)
= U_{\sigma}.
\]
\end{lemma}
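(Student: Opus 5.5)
The plan is to build $\check{x}_{\sigma^c}$ from a torus-invariant ample $\QQ$-Cartier divisor, and then compare the two degree-zero localizations directly.

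\emph{Existence.} Since $X$ is semiprojective, it carries an ample torus-invariant Cartier divisor; its support function is strictly convex on the support of $\Sigma$. For the maximal cone $\sigma$, let $m_\sigma\in M$ be the linear function that agrees with the support function on $\sigma$. Twisting by the character $\chi^{-m_\sigma}$, we may take $D=\sum_\rho a_\rho D_\rho$ with $a_\rho=0$ for $\rho\in\sigma(1)$. Strict convexity gives $a_\rho>0$ for every $\rho\notin\sigma(1)$. (This is the standard description of $\sigma$-linearized divisors, cf. \cite{CLSToricVarieties}*{Chapter 6}.) Then $\check{x}_{\sigma^c}=\prod_{\rho\notin\sigma(1)}x_\rho^{a_\rho}$ has degree $[D]\in\Pic(X)\subseteq\QCart(X)$, so it lies in $\checkS$. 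This proves (1) and (2).

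\emph{The localization equality.} Fix any monomial satisfying (1) and (2). The elements $\check{x}_{\sigma^c}$ and $x_{\sigma^c}$ have the same support, so $S[\check{x}_{\sigma^c}^{-1}]=S[x_{\sigma^c}^{-1}]$. Hence $\checkS[\check{x}_{\sigma^c}^{-1}]_0\subseteq S[x_{\sigma^c}^{-1}]_0$ is clear. For the reverse inclusion, take a degree-zero element $f/x_{\sigma^c}^k$. Multiply numerator and denominator by a monomial to rewrite it as $g/\check{x}_{\sigma^c}^{\,\ell}$ with $\ell\gg0$. Now $g$ is homogeneous of degree $\ell\deg\check{x}_{\sigma^c}\in\QCart(X)$, so $g\in\checkS$. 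Concretely, the required monomial is $\check{x}_{\sigma^c}^{\,\ell}/x_{\sigma^c}^k$, which is a genuine monomial once $\ell a_\rho\ge k$ for all $\rho$. Therefore $f/x_{\sigma^c}^k\in\checkS[\check{x}_{\sigma^c}^{-1}]_0$, and the two rings coincide. The identification of the common spectrum with $U_\sigma$ is then \cite{cox:homog}*{Lemma 2.2}.

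\emph{Main obstacle.} The delicate point is the existence step: one has to produce exponents that are strictly positive off $\sigma$ and zero on $\sigma$, which is what strict convexity of an ample support function provides. If the normalization via $m_\sigma$ runs into trouble, for instance with torus factors or nonfull-dimensional $\sigma$, I would instead take $\check{x}_{\sigma^c}=x_{\sigma^c}^N$. The degree of $x_{\sigma^c}$ is represented by $\sum_{\rho\notin\sigma(1)}D_\rho$, which is trivial on $U_\sigma$. The difficulty is then showing it is $\QQ$-Cartier on all of $X$, which is not automatic. So the ample-divisor construction is the route I would commit to.
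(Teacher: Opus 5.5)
Your proof is correct. The localization half is the paper's argument, and you add the detail of multiplying by the monomial $\check{x}_{\sigma^c}^{\ell}/x_{\sigma^c}^{k}$ to rewrite any degree-zero fraction over a power of $\check{x}_{\sigma^c}$.

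The existence half takes a different route. The paper works in $\Cl(X)_\RR$ with GKZ input (\cite{CLSToricVarieties}*{Theorem 15.1.10 and Proposition 15.2.1}). It uses that the cone spanned by the $\deg_\RR(x_\rho)$, $\rho\notin\sigma(1)$, contains the nef cone, and concludes that some strictly positive integral combination is ample. You instead fix one torus-invariant ample Cartier divisor $D=\sum a_\rho D_\rho$ and pass to the linearly equivalent divisor $D+\operatorname{div}(\chi^{m_\sigma})$. Its coefficients $a_\rho+\langle m_\sigma,u_\rho\rangle$ vanish on $\sigma(1)$ and, by strict convexity, are strictly positive off $\sigma(1)$; this is Cox's classical monomial $x^{\langle m_\sigma,D\rangle}$.

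What your version buys:
\begin{itemize}
\item It is elementary and gives explicit exponents.
\item The degree is visibly in $\Pic(X)$.
\item It directly shows that an ample class lies in the relative interior of the cone spanned by the $\deg_\RR(x_\rho)$, $\rho\notin\sigma(1)$. The paper's ``in particular'' silently needs this fact, since containing the nef cone does not by itself force all coefficients to be positive. That cone need not be full-dimensional in $\Cl(X)_\RR$ when $\sigma$ is nonsimplicial.
\end{itemize}
What the paper's phrasing buys is a direct link to the chamber language it reuses later, for example in the proof of \Cref{thm:orbits}, where all admissible degrees are varied.

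Your stated ``obstacle'' does not arise here. Semiprojectivity gives convex, full-dimensional support and no torus factors, so every maximal cone is full-dimensional and $m_\sigma\in M$ is uniquely determined. You were also right to discard the fallback $x_{\sigma^c}^N$, since $\sum_{\rho\notin\sigma(1)}D_\rho$ genuinely need not be $\QQ$-Cartier.
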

\begin{proof}
By definition of the irrelevant ideal $B$ of $S$ in \cite{CLSToricVarieties}*{Section~5.1}, the monomial $x_{\sigma^c}\in B$.  Write $\deg_{\RR}(x_{\rho})$ for the image of $\deg(x_{\rho})$ under the natural map $\Cl(X)\to \Cl(X)\otimes_{\ZZ}\RR$.  By \cite{CLSToricVarieties}*{Theorem 15.1.10 and Proposition 15.2.1}, the positive cone spanned by the vectors $\{\deg_{\RR}(x_{\rho}) \mid \rho \notin \sigma(1)\}$ will contain the nef cone of $X$ in $\Cl(X)\otimes_{\ZZ}\RR$.
In particular, this implies that there is some positive combination $\sum_{\rho\notin \sigma(1)} a_\rho \deg_{\RR}(x_{\rho})$ with $a_\rho \in \ZZ_{>0}$ that lies in the ample cone of $X$.  
Choose any such sequence of positive integers $a_\rho$ for $\rho \notin \sigma(1)$ and define $\check{x}_{\sigma^c}:= \prod_{\rho\notin \sigma(1)} x_{\rho}^{a_{\rho}}$.  Since every ample divisor is Cartier (and thus $\QQ$-Cartier), it follows that $\check{x}_{\sigma^c}$ satisfies the necessary conditions.

By property (1), $S[\check{x}_{\sigma^c}^{-1}] = S[x_{\sigma^c}^{-1}]$, so these rings agree in degree $0$.  
On the other hand, an element of $S[\check{x}_{\sigma^c}^{-1}]_0$ is a fraction $\frac{f}{\check{x}_{\sigma^c}^\ell}$ of total degree $0$, and therefore since $\check{x}_{\sigma^c}^\ell$ has $\QQ$-Cartier degree, so does $f$, i.e., $f\in \checkS$. 
Therefore $S[\check{x}_{\sigma^c}^{-1}]_0 = \checkS[\check{x}_{\sigma^c}^{-1}]_0$, completing the proof.
\end{proof}

\begin{defn}
For this section,  fix a choice of $\check{x}_{\sigma^c}$ as in \Cref{lem:checkxsigma} and set 
$\checkS_{\sigma}:=\checkS[\check{x}_{\sigma^c}^{-1}]$.    
\end{defn}

Now Cox's module-sheaf correspondence extends as follows.

\begin{defn}\label{def:barNotation}
Let $M$ be a (finitely generated) graded $\checkS$-module.
There is an associated quasi-coherent (coherent) sheaf $M|_X$
 defined by $M|_X(U_\sigma) \ce M[\check{x}_{\sigma^c}^{-1}]_0$ for each $\sigma\in\Sigma_{\max}$.  In particular, for a homogeneous ideal $I \subseteq \checkS$, denote by $I|_X \subseteq \cO_X$ the corresponding ideal sheaf and by $V_X(I)$ the corresponding closed subset.
\end{defn}

The sheaf in \Cref{def:barNotation} is often denoted $\widetilde{M}$, but we use $M|_X$ to avoid potential confusion when a single module $M$ determines a sheaf on different varieties (or later, stacks) with the same $\QQ$-Cartier Cox ring. 
Observe that if $M'$ is a graded $S$-module and $M=\bigoplus_{d \in \QCart(X)} M'_d$ is the $\checkS$-module obtained by restricting degrees, then a minor variant of the proof of \Cref{lem:checkxsigma} shows that $M'|_X$ and $M|_X$ induce the same sheaves on $X$ (because they have the same sections on $U_{\sigma}$ for any $\sigma$).

\medskip 
    
If $\checkG\ce \Hom(\QCart(X), \CC^*)$, then $\QCart(X)$ can be viewed as the character group of $\checkG$. 
Viewing $\deg{s}$ as a character of $\checkG$, the $\QCart(X)$-grading on $\checkS$ induces an action of $\checkG$ on $\checkS$ via $g\cdot s \ce (\deg{s})(g)s$, which induces a $\checkG$-action on $\Spec(\checkS)$ and $\Spec(\checkS) - V(B)$. 
We will show that $X$ is a good geometric quotient (see \Cref{defn:geometricQuotient}) for this action. 
The proof will require the following standard fact.

\begin{lemma}\label{lem:invariants}
If $\sigma\in \Sigma_{\max}$, then the invariant ring $\checkS_{\sigma}^{\checkG}$ equals $(\checkS_{\sigma})_0$.
\end{lemma}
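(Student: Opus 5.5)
The plan is to reduce to a statement about characters of a diagonalizable group. The ring $\checkS_{\sigma}=\checkS[\check{x}_{\sigma^c}^{-1}]$ is a $\QCart(X)$-graded ring, since $\check{x}_{\sigma^c}$ is homogeneous of $\QQ$-Cartier degree. So we may write
\[
\checkS_{\sigma}=\bigoplus_{d\in \QCart(X)} (\checkS_{\sigma})_d ,
\]
and $\checkG$ acts on the summand $(\checkS_\sigma)_d$ through the character $\chi_d\colon g\mapsto g(d)$, extending the action defined on $\checkS$. An element $s=\sum_d s_d$ is invariant if and only if $g\cdot s = s$ for all $g$. Because the sum is direct and the action preserves each graded piece, this happens if and only if, for every $d$ with $s_d\neq 0$, we have $\chi_d(g)=1$ for all $g\in\checkG$. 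The containment $(\checkS_\sigma)_0\subseteq \checkS_\sigma^{\checkG}$ is then immediate, because $\chi_0$ is trivial.

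For the reverse containment, the key step is to show that $\chi_d$ is a nontrivial character whenever $d\neq 0$. In other words, the natural map $\QCart(X)\to \Hom(\checkG,\CC^*)$ must be injective. Two facts give this.
\begin{enumerate}
\item $\QCart(X)$ is a finitely generated abelian group, being a subgroup of $\Cl(X)$.
\item $\CC^*$ is a divisible, hence injective, abelian group that contains all roots of unity.
\end{enumerate}
Now fix $0\neq d\in\QCart(X)$. We will produce a homomorphism $g\colon\QCart(X)\to\CC^*$ with $g(d)\neq 1$. First define such a map on the cyclic subgroup $\langle d\rangle$: if $d$ has infinite order, send $d$ to $2$; if $d$ has order $n$, send $d$ to $e^{2\pi i/n}$. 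Injectivity of $\CC^*$ then lets us extend this map to all of $\QCart(X)$. One could instead write $\QCart(X)\cong \ZZ^r\oplus \bigoplus_i \ZZ/n_i$ and argue directly.

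Combining the two steps: an invariant $s$ can have $s_d\neq 0$ only when $d=0$, so $s\in(\checkS_\sigma)_0$. The only point needing care is that the torsion part of $\QCart(X)$ is genuinely detected by $\checkG$. This is exactly where finite generation and the roots of unity in $\CC^*$ enter, and it is the main thing to verify.
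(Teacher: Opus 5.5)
Your proof is correct and takes the same approach as the paper. The paper just observes that for a homogeneous fraction, $g\cdot(s/s') = (\deg s)(g)\,s/\bigl((\deg s')(g)\,s'\bigr)$, so $s/s'$ is invariant exactly when $\deg s = \deg s'$; you additionally supply the two points the paper leaves implicit, namely that invariance of an arbitrary element reduces to its homogeneous components, and that $\QCart(X)\to\Hom(\checkG,\CC^*)$ is injective even on torsion, which is what justifies the paper's ``if and only if''.
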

\begin{proof}
This is immediate from the definition of the $\checkG$-action.  Namely, if $s/s'\in \checkS_{\sigma}$ with $s,s'$ homogeneous, then $g\cdot (s/s') = (\deg{s})(g)s / (\deg{s'})(g)s'$, which equals $s/s'$ for all $g$ if and only if $\deg(s)=\deg(s')$.  
\end{proof}

\begin{defn}\label{def:goodCatQuot}
A map $\pi\colon Y \to X$ is a \defi{good categorical quotient} of $Y$ by $G$ if
\begin{enumerate}
    \item The morphism $\pi$ is $G$-invariant, surjective, and affine.
    \item  The natural homomorphism of $\cO_Y$-modules 
    $\cO_Y \xrightarrow{\sim} (\pi_* \cO_X)^G$ 
    is an isomorphism. \item  If $W \subseteq X$ is a $G$-invariant closed subset, then its image $\pi(W)$ is closed in $Y$.
    \item If $W_1, W_2 \subseteq X$ are disjoint $G$-invariant closed subsets, then their images $\pi(W_1)$ and $\pi(W_2)$ are disjoint in $Y$.
    \item $\pi$ is universal among $G$-invariant morphisms to varieties.   
    That is, for any variety $Z$ and any $G$-invariant morphism $f\colon Y \to Z$, there exists a unique morphism $g\colon X \to Z$ such that $f = g \circ \pi$:
\[
\begin{tikzcd}[row sep=large, column sep=large]
Y \arrow[r, "\pi"] \arrow[dr, "f"'] & X \arrow[d, "g", dashed] \\
& Z.
\end{tikzcd}
\]
\end{enumerate}
\end{defn}

\begin{prop}\label{prop:goodCatQuot}
If $X$ is a semiprojective toric variety, not necessarily simplicial, and $Y = \Spec(\checkS)-V(\checkB)$, then the natural map $\pi\colon Y\to X$ 
is a good categorical quotient.
\end{prop}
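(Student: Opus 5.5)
The plan is to mimic Cox's proof of \cite{cox:homog}*{Theorem 2.1} (see also \cite{CLSToricVarieties}*{Theorem 5.1.11}), using \Cref{lem:checkxsigma} to replace $x_{\sigma^c}$ by $\check{x}_{\sigma^c}$ throughout. Note that in \Cref{def:goodCatQuot} the roles of $X$ and $Y$ are swapped relative to our $\pi\colon Y\to X$; we read conditions (2)--(4) with $Y$ the source and $X$ the target.

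\emph{Step 1: the affine cover and the map.} Since $\checkB$ is the radical of the ideal generated by the $\check{x}_{\sigma^c}$ for $\sigma\in\Sigma_{\max}$, we have
\[
Y=\bigcup_{\sigma\in\Sigma_{\max}} Y_\sigma, \qquad Y_\sigma := \Spec(\checkS_\sigma)=D(\check{x}_{\sigma^c}).
\]
This holds because $\checkB$ is generated, up to radical, by monomials in $\checkS$ whose squarefree parts lie in $B$, and $B=\langle x_{\sigma^c}\rangle$. The inclusion $(\checkS_\sigma)_0\subseteq \checkS_\sigma$ gives $\pi_\sigma\colon Y_\sigma\to \Spec((\checkS_\sigma)_0)=U_\sigma$ by \Cref{lem:checkxsigma}. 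To glue, I check that $U_\sigma\cap U_\tau$ is $\Spec$ of $\checkS[(\check{x}_{\sigma^c}\check{x}_{\tau^c})^{-1}]_0$, which by the same degree argument as in \Cref{lem:checkxsigma} equals $S[(x_{\sigma^c}x_{\tau^c})^{-1}]_0$, matching Cox's gluing. Moreover $\pi^{-1}(U_\sigma)=Y_\sigma$, so $\pi$ is affine, and $\pi$ is $\checkG$-invariant because degree-zero elements are invariants.

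\emph{Step 2: reduction to the affine GIT statement.} On each $Y_\sigma$, \Cref{lem:invariants} gives $\cO(U_\sigma)=\checkS_\sigma^{\checkG}$. Since $\checkG$ is the diagonalizable group $\Hom(\QCart(X),\CC^*)$, it is linearly reductive. Hence $\pi_\sigma$ is the affine GIT quotient $Y_\sigma\to Y_\sigma/\!\!/\checkG$. Standard results on reductive quotients of affine varieties (e.g.\ \cite{CLSToricVarieties}*{Proposition 5.0.9}) then give, locally over each $U_\sigma$, the invariant-sections isomorphism (2), closedness of images of invariant closed sets (3), separation of disjoint invariant closed sets (4), surjectivity, and the universal property (5). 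Each of these properties is local on the target, and $\pi^{-1}(U_\sigma)=Y_\sigma$, so they globalize: for (5), the local factorizations agree on overlaps by uniqueness and therefore glue.

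\emph{Main obstacle.} The step needing the most care is Step 1: identifying $Y$ with $\bigcup_\sigma D(\check{x}_{\sigma^c})$, i.e.\ showing that $V(\checkB)=V(\langle \check{x}_{\sigma^c}\rangle)$ inside $\Spec\checkS$. One direction holds because each $\check{x}_{\sigma^c}\in\checkB$, as its degree is ample and Cartier. For the other, I will take any homogeneous $f\in\checkB$ and use $\checkB\subseteq B$: some power of $f$ is then a combination of multiples of the $x_{\sigma^c}$. I then raise to a higher power so that the coefficients can be chosen in $\checkS$ as multiples of the $\check{x}_{\sigma^c}$. This is done by working monomial by monomial, since $\checkB$ is a monomial ideal: each monomial $m\in\checkB$ is divisible by some $x_{\sigma^c}$, hence $m^{N}$ is divisible by $\check{x}_{\sigma^c}$ for $N\ge\max a_\rho$, and the quotient has $\QQ$-Cartier degree, so it lies in $\checkS$. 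This shows that $\sqrt{\checkB}=\sqrt{\langle \check{x}_{\sigma^c}\rangle}$.
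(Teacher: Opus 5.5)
Your proposal is correct and follows essentially the same route as the paper. The paper reduces via \cite{CLSToricVarieties}*{Proposition 5.0.12} to the charts $\pi^{-1}(U_\sigma)\to U_\sigma$, then combines \Cref{lem:invariants} with the affine reductive-quotient result \cite{CLSToricVarieties}*{Proposition 5.0.9}, exactly as in your Step 2. Your Step 1, showing $V(\checkB)=V(\langle \check{x}_{\sigma^c}\rangle)$ so that $Y=\bigcup_\sigma \Spec(\checkS_\sigma)$, spells out a point the paper leaves implicit, and your monomial-by-monomial argument for it is sound. The equality $\pi^{-1}(U_\sigma)=\Spec(\checkS_\sigma)$ is only asserted, both by you and by the paper, but it follows as in Cox's setting by writing $U_\sigma\cap U_\tau$ as a principal open subset of $U_\tau$.
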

\begin{proof}
By \cite{CLSToricVarieties}*{Proposition 5.0.12}, it suffices to show that for each open $U_\sigma\subseteq X$, the induced $\pi^{-1}(U_\sigma)\to U_\sigma$ is a good categorical quotient. 
This follows immediately by combining \Cref{lem:invariants} and \cite{CLSToricVarieties}*{Proposition 5.0.9}.
\end{proof}

\begin{defn} \label{defn:geometricQuotient} For an algebraic group $G$ acting on a variety $Y$, 
a map $Y \to X$ is a \defi{geometric quotient} of $Y$ by $G$ if it is a good categorical quotient and $G$ acts transitively on the fibers.
\end{defn}

\begin{theorem}\label{thm:orbits}
If $X$ is a semiprojective toric variety, not necessarily simplicial, and $Y = \Spec(\checkS)-V(\checkB)$, then $Y \to X$ is a geometric quotient. 
\end{theorem}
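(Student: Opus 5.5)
By \Cref{prop:goodCatQuot}, the map $\pi\colon Y\to X$ is already a good categorical quotient. It therefore remains to show that $\checkG$ acts transitively on the fibers of $\pi$, i.e.\ that every $\checkG$-orbit in $Y$ is closed. In a good categorical quotient, each fiber contains exactly one closed orbit, so closedness of all orbits forces each fiber to be a single orbit.

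The question is local on $X$. So I would fix $\sigma\in\Sigma_{\max}$ and work on $\pi^{-1}(U_\sigma)=\Spec(\checkS_\sigma)$, using $\check{x}_{\sigma^c}$ from \Cref{lem:checkxsigma}. The criterion I would aim for is the standard one from \cite{CLSToricVarieties}*{Proposition 5.0.9/Theorem 5.1.11}: the action on $\Spec(\checkS_\sigma)$ has closed orbits (hence a geometric quotient) provided the degree-zero part controls everything up to units. Concretely, it suffices that for each variable-type generator of $\checkS$ there is a homogeneous unit of $\checkS_\sigma$ of the same degree up to finite index. More precisely, I want to show:
\begin{quote}
for every $d\in\QCart(X)$, some positive multiple $md$ is the degree of a Laurent monomial that is a unit in $\checkS_\sigma$.
\end{quote}
Granting this, $\checkS_\sigma$ is a finite module over $(\checkS_\sigma)_0[\text{units}]$, and the orbits become closed. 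Equivalently, the stabilizer computation shows that $\checkG$ acts on each fiber through a torus-times-finite group that is transitive.

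The key input is the $\QQ$-Cartier condition. If $D=\sum b_\rho D_\rho$ is $\QQ$-Cartier, then $mD$ is Cartier for some $m>0$. Hence on $U_\sigma$ we have $mD=\operatorname{div}(\chi^{m_\sigma})$ locally, so
\[
mD-\operatorname{div}(\chi^{m_\sigma})=\sum_{\rho\notin\sigma(1)} c_\rho D_\rho .
\]
Thus $md=\deg\big(\prod_{\rho\notin\sigma(1)}x_\rho^{c_\rho}\big)$, a Laurent monomial in the variables inverted in $\checkS_\sigma$. Some power of it lies in $\checkS_\sigma$ and is a unit there, because its support is contained in that of $\check{x}_{\sigma^c}$ and its degree $md$ is $\QQ$-Cartier.

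This is exactly where the nonsimplicial case differs from the Weil-divisor Cox ring. There, $S$-degrees of $x_\rho$ with $\rho\in\sigma(1)$ need not be torsion modulo the degrees of inverted variables, which yields positive-dimensional stabilizer mismatches and non-closed orbits, as in \Cref{ex:motivatingEx}.

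The main obstacle is the final step: turning "every degree is, up to finite index, the degree of a unit" into transitivity on fibers. My plan is as follows.
\begin{enumerate}
\item Take two points $y,y'\in\Spec(\checkS_\sigma)$ with $\pi(y)=\pi(y')$. They then agree on all degree-zero functions.
\item Let $T\subseteq\checkG$ be the subgroup acting on the units. Use $T$ to match the values of $y$ and $y'$ on the unit monomials. This is possible since a unit monomial is nonvanishing at both points, and the characters of these monomials generate a finite-index subgroup of $\QCart(X)$.
\item For any homogeneous $s$ of degree $d$, choose a unit $u$ with $\deg u=md$. Then $s^m/u$ is invariant, so $s^m(y)=s^m(y')$ after the translation in step 2.
\item The remaining ambiguity is an $m$-th root of unity for each $d$, i.e.\ a character of the finite group $\QCart(X)/\langle\deg\text{ of units}\rangle$. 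This is absorbed by the finite part of $\checkG$ fixing those unit characters, using that $\checkG\to\Hom(\text{unit degrees},\CC^*)$ is surjective with the correct finite kernel.
\end{enumerate}
Making step 4 consistent simultaneously for all $s$ is the delicate point. I would handle it by noting that $y$ and $y'$ both define ring homomorphisms $\checkS_\sigma\to\CC$, and that their "ratio" on homogeneous elements is a multiplicative character of the degree monoid. This extends to a group character of $\QCart(X)$, i.e.\ an element of $\checkG$.
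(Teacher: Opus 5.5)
Your proposal is correct in substance. After the common first step (using \Cref{prop:goodCatQuot} to reduce to transitivity of $\checkG$ on the fibers over each $U_\sigma$), it follows a different route from the paper.

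\textbf{Comparison with the paper.} The paper fixes a point $\widetilde{p}$ over $p$ and shows the orbit map $\checkG\to\pi^{-1}(p)$ is surjective. Its comorphism $\zeta\colon \checkS_\sigma\otimes_{(\checkS_\sigma)_0}(\checkS_\sigma)_0/I_p\to\CC[\QCart(X)]$ is finite, because the degrees of unit monomials generate a finite-index subgroup. It is injective, because homogeneous elements vanishing at $\widetilde{p}$ are nilpotent in the fiber ring (as $f^k/m$ is a scalar modulo $I_p$). Lying over then finishes. You instead compare two points $y,y'$ of a fiber directly and build the group element as a character of $\QCart(X)$, extended using divisibility of $\CC^*$.

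Both arguments rest on the same input: every $d\in\QCart(X)$ has a multiple $md$ that is the degree of a homogeneous unit $u$ of $\checkS_\sigma$, so $s^m/u$ has degree zero and is constant on fibers. Your derivation of this from local principality of $mD$ on $U_\sigma$ is more careful than the paper's. The paper asks for a monomial of degree $kd$ ``of the form from \Cref{lem:checkxsigma}''; such monomials have effective degree, whereas in general one needs a Laurent monomial, which is exactly what you produce. Your route avoids finiteness and lying over and exhibits $g$ explicitly. The paper's route phrases the same information as a property of the ring map $\zeta$.

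\textbf{What remains to be proved.} Your last paragraph asserts that the ``ratio'' of $y'$ to $y$ is a character. This is exactly where the $\QQ$-Cartier hypothesis enters; it fails for the Weil Cox ring of \Cref{ex:motivatingEx}. It follows from your step 3 in two lines:
\begin{enumerate}
\item[(a)] For homogeneous $s$ of degree $d$, $(s^m/u)(y)=(s^m/u)(y')$ with $u(y),u(y')\neq 0$. Hence $s(y)=0$ if and only if $s(y')=0$.
\item[(b)] If $s_1,s_2$ have degree $d$ and do not vanish at $y$, compare the degree-zero elements $s_1s_2^{m-1}/u$ and $s_2^m/u$ at $y$ and $y'$. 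This gives $s_1(y')/s_1(y)=s_2(y')/s_2(y)$, so the ratio depends only on $d$.
\end{enumerate}
The degrees of homogeneous elements not vanishing at $y$ then form a subgroup of $\QCart(X)$; use $s^{m-1}u^{-1}$ to get $-d$. The ratio is a homomorphism on this subgroup, and any extension to $\QCart(X)$ is the desired $g\in\checkG$.

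With this in place, your steps 2 and 4 are unnecessary; step 4 is not justified as written. Also unnecessary is the unproved claim that finiteness of $\checkS_\sigma$ over $(\checkS_\sigma)_0[\text{units}]$ forces closed orbits.
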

\begin{proof}
By \Cref{prop:goodCatQuot}, $Y \to X$ is a good categorical quotient. 
It remains to show that $G$ acts transitively on the fibers of $Y \to X$.  
This can be checked locally on the base, and so $X$ can be replaced by $U_\sigma=\Spec\left((\checkS_{\sigma})_0\right)$ and $Y$ by $\Spec \checkS_\sigma$. 

The point $p\in U_\sigma$ corresponds to a maximal ideal $\fp\subseteq (\checkS_\sigma)_0$, and the reduced fiber $\pi^{-1}(p)$ has coordinate ring $\checkS_\sigma\otimes_{(\checkS_\sigma)_0} (\checkS_\sigma)_0/\fp$. A point $\tp\in\pi^{-1}(p)$ corresponds to a surjective map 
\[
\ev_{\tp}\colon \checkS_\sigma \otimes_{(\checkS_\sigma)_0} (\checkS_\sigma)_0/\fp \to \CC.
\]
As a scheme, the group $\checkG$ is $\Spec(\CC[\QCart(X)])$,  
where $\chi^d\in \CC[\QCart(X)]$ denotes the monomial corresponding to $d\in \QCart(X)$. 
The $\checkG$-action on the fiber
\begin{align*}
\checkG \times \widetilde{p} & \to \pi^{-1}(p) \\
g & \mapsto g \cdot \widetilde{p}
\end{align*}
is given explicitly by the following ring map, 
\begin{align*}
\zeta\colon \checkS_\sigma \otimes_{(\checkS_\sigma)_0} (\checkS_\sigma)_0/\fp &
\to \CC[\QCart(X)] \\
f \otimes v & \mapsto \ev_{\tp}(fv)\chi^{\deg f}.
\end{align*}
Hence it is enough to show that this map is finite and injective, so that the map $\checkG \to \pi^{-1}(p)$ is surjective by lying over (see, for example, \cite{eisenbud}*{Proposition 4.15}).  

For finiteness of $\zeta$, any choice of $\check{x}_{\sigma^c}$ from \Cref{lem:checkxsigma}  is a unit in $\checkS_\sigma$, so $\ev_{\tp} (\check{x}_{\sigma^c}\otimes 1) = a \neq 0$ and 
$\zeta(\check{x}_{\sigma^c}\otimes 1) =  a \check{x}_{\sigma^c}$. 
The proof of \Cref{lem:checkxsigma} shows that $\{\deg(x_\rho)\mid\rho\in\sigma^c\}\cap\Amp(X)$ contains a finite index submonoid $Q_\sigma$ of $\Amp(X)\cap\Cl(X)$. 
Varying through all choices of $\check{x}_{\sigma^c}$ implies that the image of $\zeta$ contains 
$\CC[Q_\sigma]$. 
Since $\check{x}_{\sigma^c}$ is a unit in the source of $\zeta$, the image of $\zeta$ contains $\CC[\ZZ Q_\sigma]$. 
Since $\ZZ Q_\sigma$ is a finite index subgroup of $\ZZ(\Amp(X)\cap\Cl(X))$, which is turn a finite index subgroup of $\QCart(X)$, the map $\zeta$ must be finite.

To show that $\zeta$ is injective, let $\tfp=\ker(\ev_{\tp})$ be the maximal ideal corresponding to $\tp$. 
We first claim that $\tfp$ contains no homogeneous elements. To see this, suppose to the contrary that $f\otimes 1\in\tfp$ is homogeneous of degree $d\in\QCart(X)$. 
Then there exist $k\in\NN$ so that $kd\in\Pic(X)$ and a monomial $m\in (\checkS_\sigma)_{kd}$ of the form from \Cref{lem:checkxsigma}, so that it is a unit. 
This implies that $\frac{f^k}{m}\in (\checkS_\sigma)_0/\fp\cong \CC$, so there exists $\lambda\in\CC$ with $\frac{f^k}{m}=\lambda$. 
Now 
\begin{align}\label{eq:symbolPush}
f^k\otimes 1 = m \cdot \frac{f^k}{m}\otimes 1 = m\otimes \frac{f^k}{m} = m\otimes \lambda = \lambda m \otimes 1. 
\end{align}
Since $m$ is a unit in $\checkS_\sigma$, $\ev_{\tp}(m\otimes 1)$ is nonzero, and so $\ev_{\tp}(f^k\otimes 1) = \lambda\ev_{\tp}(m\otimes 1)$ is zero if and only if $\lambda=0$. 
By \eqref{eq:symbolPush}, $\lambda=0$ exactly when $f^k\otimes 1=0$, and since $\tfp$ is prime, this occurs if and only if $f\otimes 1=0$, which establishes the claim. 

Finally, consider a nonzero element of the form $\sum_d h_d\otimes 1$, where $h_d$ is of degree $d$. Then by the computations in the previous paragraph, 
\[
\zeta\left(\sum_d h_d\otimes 1\right) 
= \sum_d \ev_{\tp}(h_d)\chi^d, 
\]
which is nonzero since some $h_d$ is nonzero. Thus $\zeta$ is injective, as desired. 
\end{proof}

\begin{prop} \label{prop:geometricquotientClosedSub}
Let $X$ and $Y$ be algebraic varieties, where $Y$ has an action by an algebraic group $H$. If $\pi\colon Y \to X$ is a geometric quotient, then there is a bijection between closed subspaces of $X$ and $H$-invariant closed subspaces of $Y$.
\end{prop}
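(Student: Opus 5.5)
The bijection will be $Z \mapsto \pi^{-1}(Z)$ from closed subsets of $X$ to $H$-invariant closed subsets of $Y$, with inverse $W \mapsto \pi(W)$. Here ``closed subspaces'' means closed subsets, or equivalently reduced closed subvarieties, which is what is needed for \Cref{thm:nullstellensatz-var}. I will only use the properties packaged in \Cref{def:goodCatQuot} and \Cref{defn:geometricQuotient}: surjectivity, the closedness of images of invariant closed sets, and transitivity of $H$ on fibers.

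First, both maps are well defined. If $Z \subseteq X$ is closed, then $\pi^{-1}(Z)$ is closed by continuity, and it is $H$-invariant because $\pi$ is $H$-invariant, so $\pi(h\cdot y)=\pi(y)$. Conversely, if $W \subseteq Y$ is an $H$-invariant closed subset, then $\pi(W)$ is closed by property (3) of a good categorical quotient.

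Next I check that the two composites are the identity. Since $\pi$ is surjective, $\pi(\pi^{-1}(Z)) = Z$. For the other composite, $W \subseteq \pi^{-1}(\pi(W))$ holds trivially. For the reverse inclusion, suppose $y \in \pi^{-1}(\pi(W))$. Then $\pi(y)=\pi(w)$ for some $w\in W$. Because $H$ acts transitively on fibers, $y = h\cdot w$ for some $h\in H$, and $H$-invariance of $W$ gives $y\in W$. So $\pi^{-1}(\pi(W)) = W$.

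Finally, if the statement is meant for irreducible subvarieties, irreducibility also has to be matched. Images of irreducible sets are irreducible, so $\pi(W)$ is irreducible when $W$ is. Conversely, $\pi^{-1}(Z)$ is irreducible ``up to $H$'', meaning no proper $H$-invariant closed subset decomposition exists. This is the only step needing care: if $\pi^{-1}(Z)=W_1\cup W_2$ with $W_i$ invariant and closed, then $Z=\pi(W_1)\cup\pi(W_2)$, so $Z=\pi(W_i)$ for some $i$, and then $W_i=\pi^{-1}(Z)$ by the previous step. The bijection is therefore between closed subvarieties of $X$ and $H$-irreducible invariant closed subsets of $Y$. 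I expect the main subtlety to be this bookkeeping of what ``subspace'' means, rather than any genuine obstacle, since transitivity on fibers does all the work.
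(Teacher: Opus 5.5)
Your proposal is correct and is essentially the paper's argument: transitivity on fibers makes every $H$-invariant closed set a union of full fibers, so $\pi^{-1}(\pi(W))=W$, while surjectivity gives $\pi(\pi^{-1}(Z))=Z$; you are more careful than the paper in citing property (3) of \Cref{def:goodCatQuot} to see that $\pi(W)$ is closed. In your optional irreducibility remark, a bijection onto $H$-irreducible sets also requires that $\pi(W)$ be irreducible when $W$ is only $H$-irreducible (you showed it only for irreducible $W$), and this follows from your own argument applied to $W=\pi^{-1}(Z_1)\cup\pi^{-1}(Z_2)$ whenever $\pi(W)=Z_1\cup Z_2$.
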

\begin{proof}
By definition, a geometric quotient induces a bijection
\vspace*{-1mm}
\[
\begin{tikzcd}
\left\{ H\text{-orbits of }Y \right\} \ar[r, bend left, "\pi"] & \left\{ \text{points of }X \right\} \ar[l, bend left, "\pi^{-1}"].
\end{tikzcd}
\]
\vspace*{-1mm}
Since an $H$-invariant closed subspace of $Z \subseteq Y$ is a union of $H$-orbits, $\pi^{-1}(\pi(Z)) = Z$.  
This induces the desired bijection
\vspace*{-4mm}
\[
\begin{tikzcd}
\left\{ H\text{-invariant subspaces of }Y \right\} \ar[r, bend left, "\pi"] & \left\{ \text{subspaces of }X \right\} \ar[l, bend left, "\pi^{-1}"].
\end{tikzcd}
\qedhere
\]
\end{proof}

We are now prepared to give a non-stacky proof of \Cref{thm:nullstellensatz-var}. 

\begin{thmA}[Toric Ideal-Variety Correspondence]
If $X$ is a semiprojective toric variety, not necessarily simplicial, then there is a bijective correspondence between closed subvarieties of $X$ and radical homogeneous ideals $I \subseteq \checkB \subseteq \checkS$.
\end{thmA}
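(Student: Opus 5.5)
The plan is to combine \Cref{thm:orbits} with \Cref{prop:geometricquotientClosedSub} and then translate $\checkG$-invariant closed subvarieties of $Y=\Spec(\checkS)-V(\checkB)$ into radical homogeneous ideals of $\checkS$ via the usual affine Nullstellensatz on $\Spec(\checkS)$. By \Cref{thm:orbits}, $\pi\colon Y\to X$ is a geometric quotient for the action of $\checkG$. Hence \Cref{prop:geometricquotientClosedSub} gives a bijection $W\mapsto \pi^{-1}(W)$ between closed subvarieties (closed subsets) of $X$ and $\checkG$-invariant closed subsets of $Y$. Its inverse is $Z\mapsto \pi(Z)$, and $\pi(Z)$ is closed by property (3) of \Cref{def:goodCatQuot}.

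Next I will identify $\checkG$-invariant closed subsets of $Y$ with radical homogeneous ideals. Since $\checkG$ is diagonalizable with character group $\QCart(X)$, a closed subset of $\Spec(\checkS)$ is $\checkG$-invariant if and only if its radical ideal is $\QCart(X)$-homogeneous; this is the standard correspondence between gradings and diagonalizable group actions. A $\checkG$-invariant closed subset $Z\subseteq Y$ has a closure $\overline{Z}$ in $\Spec(\checkS)$ that is still invariant, because the action extends to $\Spec(\checkS)$ and closure commutes with the group action. No irreducible component of $\overline{Z}$ lies in $V(\checkB)$. So I will send $Z$ to $I(\overline{Z})$, which is a radical homogeneous ideal, and send a radical homogeneous ideal $I$ to $V(I)\cap Y$.

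The remaining task is to interpret the condition ``$I\subseteq\checkB$'' so that these maps become mutually inverse bijections. As in Cox's theorem, I read it as saying that $I$ is $\checkB$-saturated, i.e.\ $I=(I:\checkB^\infty)$; equivalently, $V(I)$ is the closure of $V(I)\cap Y$. The difficulty is the non-injectivity: distinct radical ideals can agree away from $V(\checkB)$ when a component of $V(I)$ lies inside $V(\checkB)$. I will handle this by the standard argument:
\begin{itemize}
\item for a radical $I$, the ideal of the closure of $V(I)-V(\checkB)$ is $(I:\checkB^\infty)$, which is again radical and homogeneous since $\checkB$ is homogeneous;
\item $I(\overline{Z})$ is saturated by construction, because no component of $\overline{Z}$ is contained in $V(\checkB)$.
\end{itemize}
Composing the two bijections then gives the theorem.

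I expect the main obstacle to be making the phrase ``$I\subseteq\checkB$'' match precisely the saturation/closure condition. The subtle case is $\checkB=\langle 1\rangle$, as in the motivating example, where every ideal trivially satisfies it and the condition must still single out exactly one ideal per subvariety. There $Y=\Spec(\checkS)$ and every radical homogeneous ideal is automatically saturated, so the bijection holds with no extra condition, consistent with the example. In general I will state the correspondence with $\checkB$-saturated radical homogeneous ideals and note that this is the sense of the condition in \Cref{thm:cox}. Everything else is formal once \Cref{thm:orbits} is in hand.
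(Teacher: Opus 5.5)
Your first two steps match the paper's proof: \Cref{thm:orbits} and \Cref{prop:geometricquotientClosedSub} identify closed subsets of $X$ with $\checkG$-invariant closed subsets of $Y=\Spec(\checkS)-V(\checkB)$, and invariance corresponds to homogeneity. The gap is in the last step, where you reinterpret the hypothesis. In the statement, and in Cox's \Cref{thm:cox}, the condition $I\subseteq\checkB$ is literal containment. It is \emph{not} equivalent to $\checkB$-saturation. For instance, take $X=\PP^1\times\PP^1$, where $\checkS=S$ and $\checkB=B=\langle x_0,x_1\rangle\cap\langle y_0,y_1\rangle$. There $\langle x_0\rangle$ is saturated but not contained in $B$, while $\langle x_0\rangle\cap B$ is contained in $B$ but not saturated. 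On $\PP^n$, the ideal $\checkB$ itself is radical, homogeneous and contained in $\checkB$, yet its saturation is $\langle 1\rangle$. Both families of ideals are in bijection with closed subsets of $X$, but they are different families. What your argument proves is item (3) of \Cref{prop:alternateNullstellensatz}, whereas the theorem is item (2). The two conditions agree only when $\checkB=\langle 1\rangle$, which is why the motivating example did not expose the discrepancy.

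The missing idea is to normalize by \emph{adding} $V(\checkB)$ rather than discarding components lying inside it. The paper uses the bijection $W\mapsto \overline{W}\cup V(\checkB)$, with inverse $Z\mapsto Z-V(\checkB)$, between closed subsets of $Y$ and closed subsets of $\Spec(\checkS)$ containing $V(\checkB)$. By the affine Nullstellensatz, these correspond to radical ideals contained in $\sqrt{\checkB}=\checkB$. Here $\checkB=B\cap\checkS$ is radical because $B$ is generated by squarefree monomials. Alternatively, you can keep your argument and compose it with the mutually inverse maps $J\mapsto J\cap\checkB$ and $I\mapsto (I:\checkB^\infty)$. These go between $\checkB$-saturated radical homogeneous ideals and radical homogeneous ideals contained in $\checkB$. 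With that extra step your proof is complete.
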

\begin{proof}[Proof of \Cref{thm:nullstellensatz-var}]
By \Cref{thm:orbits}, $Y \to X$ is a geometric quotient.  Hence, by \Cref{prop:geometricquotientClosedSub}, there is a bijection between $\checkG$-invariant closed subspaces of $Y$ and $X$.  Thus $Y = \Spec \checkS - V(\checkB)$. 
Closed subspaces of $Y$ are of the form $Z - (Z\cap V(\checkB))$, where $Z$ is a closed subspace of $\Spec(\checkS$). 
There is a bijection between closed subspaces of $Y$ and closed subspaces of $\Spec \checkS$ that contain $V(\checkB)$ given by $Z - (Z\cap V(\checkB))\mapsto Z\cup V(\checkB)$. 
It follows that closed subspaces of $Y$ are in bijection with radical ideals of the form $I \subseteq B$.
Asking that $V(I)$ is $\checkG$-invariant corresponds algebraically to $I$ being homogeneous.
\end{proof}

A more modern perspective on quotients is to utilize the language of stacks, where Mumford's original definition of geometric quotients corresponds to the notion of coarse moduli map, as seen in \Cref{thm:coarseModuli}.  
This connection is discussed in Section~\ref{sec:background}.

To see various alternate ways to describe the closed subvarieties of $X$ in terms of radical ideals, recall the following notation. 

\begin{notation}\label{not:satutation}
Recall that given ideals $I,J$ in a ring $R$, the \defi{$J$-saturation} of $I$ is 
\[
I:J^\infty \ce \{r\in R\mid \text{there exists } s\in J \text{ such that } s^Nr\in I \text{ some } N\geq 0\}. 
\] 
\end{notation}

\begin{prop}\label{prop:alternateNullstellensatz}
There are natural bijections between each of the following sets:
\begin{enumerate}
	\item  Closed subsets of $X$;
	\item  Radical homogeneous ideals $I\subseteq \checkB \subseteq \checkS$;
	\item  $\checkB$-saturated, radical homogeneous ideals in $\checkS$;
	\item  $B_{\Pic}$-saturated, radical ideals in $S_{\Pic}$;
	\item  Radical, homogeneous ideals $I' \subseteq B_{\Pic} \subseteq S_{\Pic}$; and 
    \item  $\checkG$-invariant subspaces of $\Spec(\checkS)-V(\checkB)$.
\end{enumerate}
\end{prop}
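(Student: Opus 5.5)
The plan is to take Theorem A and its proof as the backbone, which already yields the bijections (1)$\leftrightarrow$(2)$\leftrightarrow$(6). The remaining sets (3), (4), (5) are then attached by purely algebraic bijections. There are two parallel chains. The first, on the $\checkS$ side, is (2)$\leftrightarrow$(3). The second, on the $S_{\Pic}$ side, runs (1)$\leftrightarrow$(5)$\leftrightarrow$(4).

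\textbf{Step 1: (1)$\leftrightarrow$(6) and (6)$\leftrightarrow$(2).} The bijection (1)$\leftrightarrow$(6) is \Cref{prop:geometricquotientClosedSub} applied to \Cref{thm:orbits}. For (6)$\leftrightarrow$(2), I follow the end of the proof of Theorem A. A $\checkG$-invariant closed subset $W$ of $Y=\Spec(\checkS)-V(\checkB)$ is sent to the radical ideal of $\overline{W}\cup V(\checkB)$. This ideal is $I(\overline{W})\cap \sqrt{\checkB}$. It is homogeneous because $\checkG$ is diagonalizable and $\overline W$ is $\checkG$-stable.

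\textbf{Step 2: (2)$\leftrightarrow$(3).} The maps are $I\mapsto I:\checkB^\infty$ and $J\mapsto J\cap\checkB$. A small caveat: Theorem A's statement ``$I\subseteq\checkB$'' should be read with $\sqrt{\checkB}$ if $\checkB$ is not radical. I will use that $\checkB$ is generated by monomials $\check{x}_{\sigma^c}$ up to radical, which follows from \Cref{lem:checkxsigma} and the description $\checkB=\bigoplus_{\Amp\cap\QCart}$. Saturation then means $J=\bigcap_\sigma (J\checkS_\sigma\cap \checkS)$. Both maps preserve radical and homogeneous ideals. They also preserve the locus $V(\cdot)\cap Y$, because $V(I:\checkB^\infty)=\overline{V(I)-V(\checkB)}$. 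Hence both correspond to the same $\checkG$-invariant subset of $Y$, and so they are mutually inverse on radical ideals.

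\textbf{Step 3: the $S_{\Pic}$ side, (4) and (5).} I run the same argument for $S_{\Pic}$, $B_{\Pic}$ and $G_{\Pic}$. The input is a Picard analogue of \Cref{lem:checkxsigma}. The integers $a_\rho$ there were chosen so that the degree is ample, and ample divisors are Cartier, so the monomial $\check{x}_{\sigma^c}$ already lies in $S_{\Pic}$. The same proof gives $S_{\Pic}[\check{x}_{\sigma^c}^{-1}]_0=S[x_{\sigma^c}^{-1}]_0$. Hence $\Spec(S_{\Pic})-V(B_{\Pic})\to X$ is a good categorical quotient by the argument of \Cref{prop:goodCatQuot}. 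The orbit argument of \Cref{thm:orbits} goes through verbatim with $\Pic$ in place of $\QCart$; in fact it is easier, since $k=1$ can be taken. So we again have a geometric quotient, and Steps 1 and 2 repeat, giving (1)$\leftrightarrow$(5)$\leftrightarrow$(4).

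Homogeneity in (4) is automatic in the sense that $G_{\Pic}$-invariance is equivalent to homogeneity. The statement omits the word ``homogeneous'' in (4), and I would add it, or note that it is implicit. One can also relate the two sides directly. Contraction $J\mapsto J\cap S_{\Pic}$ and extension $I'\mapsto\sqrt{I'\checkS}$ are compatible, because $\checkS$ is integral over $S_{\Pic}$: any $f$ of $\QQ$-Cartier degree $d$ has $f^k\in S_{\Pic}$ when $kd\in\Pic$. Integrality gives going-up and lying-over, which is what shows these maps are inverse on radical ideals.

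\textbf{Main obstacle.} The delicate point is the saturation bookkeeping in Step 2. One has to show that radical saturated ideals are exactly determined by their restriction to $Y$. This requires the radical of $\checkB$ to be the monomial ideal generated by the $\check{x}_{\sigma^c}$, so that $J:\checkB^\infty=\bigcap_\sigma J:\check{x}_{\sigma^c}^\infty$ and vanishing on $Y$ is tested chart by chart. I would prove this by showing $\sqrt{\checkB}=\sqrt{\<\check{x}_{\sigma^c}\>_\sigma}$ in two directions. One direction holds because each $\check{x}_{\sigma^c}$ has ample degree. For the other, a monomial of ample degree is divisible by some $x_{\sigma^c}$, since it lies in $B$, so a suitable power of it is divisible by $\check{x}_{\sigma^c}$.
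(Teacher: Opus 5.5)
Your proof is correct. For (1), (2), (3) and (6) it is the paper's argument; for (2)$\leftrightarrow$(3), which the paper only asserts, you give more detail.

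The real difference is how you reach (4) and (5). The paper does no geometry for $S_{\Pic}$. It transfers the classification from $\checkS$ to $S_{\Pic}$ by \Cref{lem:veronese}, via contraction $I\mapsto I\cap S_{\Pic}$ and extension $J\mapsto\sqrt{J\checkS}$. This uses only two facts: every homogeneous element of $\checkS$ has a power in $S_{\Pic}$, and the degree projection $\checkS\to S_{\Pic}$ is $S_{\Pic}$-linear, so $J\checkS\cap S_{\Pic}=J$. This is exactly the direct comparison you mention only as an aside.

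You instead redo \Cref{lem:checkxsigma}, the good-quotient argument and \Cref{thm:orbits} for $S_{\Pic}$. As a result, (4) and (5) get an independent meaning: they classify $G_{\Pic}$-invariant closed subsets of $\Spec(S_{\Pic})-V(B_{\Pic})$. This is the set-theoretic shadow of \Cref{thm:Pic-iso}, which you could simply cite, since a torsor is a geometric quotient.

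The paper's route is shorter. Yours avoids some bookkeeping the paper skips. As written, the paper pairs (2) with (4) and (3) with (5), but contraction actually matches (2) with (5) and (3) with (4). One also needs $\sqrt{B_{\Pic}\checkS}=\checkB$ to see that contraction and extension respect containment in, and saturation by, the irrelevant ideals. Your remark that ``homogeneous'' must be added in (4) is right.

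Three small repairs.

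(i) $\checkB=B\cap\checkS$ is automatically radical, because $B$ is generated by squarefree monomials. So the $\sqrt{\checkB}$ caveat is unnecessary. Moreover, (2)$\leftrightarrow$(3) is pure algebra and needs no chart-by-chart description of $\checkB$. For radical $J$ one has $J:\checkB^\infty=J:\checkB$. Then $(J:\checkB)\cap\checkB=J$ when $J\subseteq\checkB$, and $(J\cap\checkB):\checkB=J$ when $J$ is saturated.

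(ii) Do not rely on $\checkB=\bigoplus_{d\in\Amp(X)\cap\QCart(X)}H^0(X,\cO_X(d))$; it is only correct up to radical. For example, on the blowup of $\PP^2$ at a point, take a maximal cone $\sigma$ not containing the exceptional ray. Then $x_{\sigma^c}\in B$ has degree equal to the pullback of a line, which is nef but not ample. Your divisibility argument still works for every monomial of $B\cap\checkS$, once you note one thing: the cofactor $q$ in $(x^a)^k=\check{x}_{\sigma^c}q$ has $\QQ$-Cartier degree, hence lies in $\checkS$.

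(iii) The monomial of \Cref{lem:checkxsigma} is only guaranteed to have $\QQ$-Cartier degree, not Cartier degree. On $\PP_v(1,1,3)$, for the maximal cone spanned by the rays of $y$ and $z$, one may take $\check{x}_{\sigma^c}=x$, whose degree $1$ is not in $\Pic(X)=3\ZZ$. So it need not lie in $S_{\Pic}$. Replace it by a power with Cartier degree, which exists because $\QCart(X)/\Pic(X)$ is finite.

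With these fixes your argument goes through.
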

Before turning to the proof, we record a more general lemma.
\begin{lemma}\label{lem:veronese}
If $H$ is a finitely generated group with finite index subgroup $K$ and $R=\bigoplus_{h\in H} R_h$ is an $H$-graded ring with $K$-graded subring  $R'=\bigoplus_{k\in K} R_{k}$, 
then there is a natural bijection as follows: 
\begin{align*}
\left\{\text{radical homogeneous ideals in } R \right\}
&\leftrightsquigarrow 
\left\{ 
\begin{matrix}\text{radical homogeneous ideals in } R' 
\end{matrix}
\right\}
\\
\Psi\colon \,\,\, \qquad I &\mapsto I\cap R', \text{ and}\\
\Phi\colon \ \ \,  \sqrt{JR} &\mapsfrom J.
\end{align*}
\end{lemma}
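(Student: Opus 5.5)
The plan is to check that $\Psi$ and $\Phi$ land in the stated sets and are mutually inverse. The single algebraic input is an \emph{integrality} fact. Let $n=[H:K]$. For every homogeneous $f\in R_h$, the class of $h$ in the finite group $H/K$ has order dividing $n$, so $nh\in K$ and hence $f^n\in R'$.

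First, well-definedness. If $I\subseteq R$ is radical and homogeneous, then $I\cap R'$ is radical in $R'$, since $R'$ is a subring. It is also homogeneous for the $K$-grading, because the homogeneous components of an element of $R'$ are its $H$-components, and these all lie in $R'$. Conversely, for a homogeneous ideal $J\subseteq R'$, the extension $JR$ is generated by homogeneous elements, so it is homogeneous. The radical of a homogeneous ideal in an $H$-graded ring is again homogeneous. The standard argument for this uses an ordering of the grading group, which is only automatic when $H$ is torsion-free; $H$ may well have torsion (e.g.\ $\Cl(X)$). So I would instead argue as follows. Pass to a torsion-free group via $H\to H/H_{\mathrm{tors}}$ to handle the free part. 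For the finite torsion part, use the action of the diagonalizable group $\Hom(H,\CC^*)$: a radical ideal stable under a torus action is homogeneous, and we are working over $\CC$ or any field containing enough roots of unity. Alternatively, I would simply note that in all applications $R$ is a $\CC$-algebra, and use the group-action characterization of homogeneity throughout. I expect this step to be the main obstacle in writing a clean proof. The rest is formal.

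Next, $\Psi\Phi(J)=J$. Clearly $J\subseteq \sqrt{JR}\cap R'$. For the reverse inclusion, take $g\in\sqrt{JR}\cap R'$; it suffices to treat homogeneous $g$, since both ideals are homogeneous. Then $g^N=\sum r_i j_i$ with $j_i\in J$ homogeneous and $r_i\in R$ homogeneous. Taking the degree-$N\deg g$ component, we may assume each $r_i$ has degree in $K$, because $\deg j_i\in K$ and $N\deg g\in K$. Hence each $r_i\in R'$, so $g^N\in J$, and since $J$ is radical, $g\in J$.

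Finally, $\Phi\Psi(I)=I$. Since $I$ is radical, $\sqrt{(I\cap R')R}\subseteq I$. Conversely, for homogeneous $f\in I$ we have $f^n\in I\cap R'$, so $f\in\sqrt{(I\cap R')R}$. Because $I$ is homogeneous, it is generated by such $f$, which gives equality. The bijection is natural, since both maps are defined intrinsically.
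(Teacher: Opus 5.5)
Your proof is correct and follows the paper's argument. Both inclusions for $\Phi\circ\Psi$ rest on $f^{[H:K]}\in R'$ for homogeneous $f$, and $\Psi\circ\Phi(J)\subseteq J$ rests on the same degree count that forces the coefficients in $g^N=\sum r_ij_i$ into $R'$. The paper does not prove that $\sqrt{JR}$ is homogeneous, and your extra step doing so is genuinely needed; your use of the $\CC$-algebra structure is the right fix. For arbitrary rings the statement fails: in $\mathbb{F}_2[x]$ with $\deg x$ the generator of $\ZZ/2$ and $K=0$, the radical of the homogeneous ideal $(x^2+1)=((x+1)^2)$ is the non-homogeneous ideal $(x+1)$.
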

\begin{proof}
We first show that for a radical homogeneous $R$-ideal $I$, $I\subseteq \Phi \circ \Psi(I)$.  
If $f\in I$ is homogeneous of degree $h\in H$, then since $K\subseteq H$ is of finite index, $f^n \in I \cap R'=\Psi(I)$ for some $n$.  
Thus $f^n \in \Psi(I) R$, so $f\in \sqrt{ \Psi(I) R'} = \Phi\circ  \Psi(I)$, establishing the claim.  

Conversely, we show that $\Phi \circ \Psi(I) \subseteq I$.  
If $f\in\Phi \circ \Psi(I)= \sqrt{ I \cap R'}$, then $f^n \in I\cap R'$ for some $n$. 
This means that $f^n$ is in $I$, and $I$ is radical, so  $f\in I$, as desired. 

Next, we show that for radical homogeneous $R'$-ideal $J$, $J \subseteq \Psi\circ \Phi(J)$.  
If $g\in J$, then $g\in (\Phi(J))=\sqrt{JR}$. 
Since $g\in R'$ to begin with, it follows that $g \in (\Psi(J)) \cap R'$, as desired. 

Conversely, we show that $\Psi\circ \Phi(J)\subseteq J$. 
If $g\in \Psi\circ \Phi(J)$, then $g\in (\Phi(J)) \cap R'=(\sqrt{JR})\cap R'$, so there exists $n$ such that $g^n \in JR$. 
Now write $g^n = \sum a_ig_i$ for some $a_i \in R$ and a choice of generators $\{g_i\}$ of $J\subseteq R'$.  
Since the degrees of $g^n$ and each $g_i$ belong to$K$, it follows that the degree of each $a_i$ must also belong to $K$.  
Thus $g^n \in J$, and since $J$ was radical, $g\in J$, establishing the final claim.
\end{proof}

\begin{proof}[Proof of \Cref{prop:alternateNullstellensatz}]
The equivalence of (1) and (2) is \Cref{thm:nullstellensatz-var}. 
Lemma~\ref{lem:veronese} implies the equivalence of (2) and (4), as well as that of (3) and (5). 
The bijection from (2) to (3) is given by $I\mapsto I:\checkB^\infty$, where the converse is given by intersecting with $\checkB$; the bijection between (4) and (5) is similar. 
Finally, the equivalence of (1) and (6) follows from \Cref{thm:orbits} and \Cref{prop:geometricquotientClosedSub}.
\end{proof}

\section{Stacks, graded modules, and quotients}
\label{sec:background}

\Cref{prop:alternateNullstellensatz} demonstrates that closed subvarieties of $X$ can be  described equally well in terms of radical ideals in $\checkS$ or $S_{\Pic}$.  
The comparison with subschemes of $X$, or more generally, quasi-coherent sheaves on $X$, is more subtle, and we will use the language of stacks to clarify these results.  
In this section, we motivate the presence of toric stacks in this article by explaining how their quasi-coherent sheaves are more naturally related to graded modules on $\checkS$ (or $S_{\Pic})$.  Then we obtain a toric Nullstellensatz result for $\sXCar$, and provide a proof for \Cref{thm:coarseModuli}.(1). 

To begin, note that one key property of quotient stacks (all stacks in this paper are quotient stacks), which is that quasi-coherent sheaves on a quotient $[Y/G]$ are naturally equivalent to $G$-equivariant sheaves on $Y$.  
This allows for concrete descriptions of sheaves and subschemes in terms of ideals and graded modules on $\checkS$ (or $S_{\Pic})$, analogous to results of Cox for the simplicial case.  
For background on stacks, there are several major references such as~\cite{olsson, vistoli, stacks-project, alper}, though we will utilize only a few specialized results from this literature. 

Let $\QCoh(-)$ denote a category of quasi-coherent sheaves. 
Further, write $\Mod_{\Cl(X)}(S)$ for the category of $\Cl(X)$-graded $S$-modules and $\Mod_{\Cl(X),B}(S)$ for the full subcategory of modules $M$, where each element $m\in M$ is annihilated by some power of $B$.   

When $X=\PP^n$, it is well known that the category $\QCoh(\PP^n)$ is the Verdier quotient $\Mod_{\ZZ}(S) / \Mod_{\ZZ,\fm}(S)$, where $\fm$ denotes the graded maximal ideal of $S$.  
In short, studying quasi-coherent sheaves on $\PP^n$ is equivalent to studying $\ZZ$-graded $S$-modules, modulo $\fm$-torsion $S$-modules.  If $X$ is simplicial, but not smooth, then the corresponding result is slightly more complicated.  In this case,  the category of sheaves on the canonical stack $\sX_{\can}$ is more closely connected to multigraded commutative algebra than the category of sheaves on the variety $X$.
The following result is essentially a stacky rephrasement of Cox's results about sheaves on simplicial toric varieties~\cite{cox:homog}.

\begin{prop}[Module-Sheaf Correspondence, Simplicial Case]\label{prop:moduleSheafCorrespondence}
If $X$ is a simplicial toric variety, then there are equivalences of categories: 
\[
\QCoh(\sX_{\can}) 
= \frac{ \Mod_{\Cl(X)}(S)}{\Mod_{\Cl(X),B}(S)} 
\quad \text{ and } \quad 
\QCoh(X) = \frac{ \Mod_{\Pic}(S_{\Pic})}{\Mod_{\Pic,B_{\Pic}}(S_{\Pic})}.
\]
\end{prop}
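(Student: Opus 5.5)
The first equivalence is handled through the quotient-stack description. Quasi-coherent sheaves on $[Y/G]$ with $Y=\Spec(S)-V(B)$ are the same as $G$-equivariant quasi-coherent sheaves on $Y$. Since $G=\Hom_\ZZ(\Cl(X),\CC^*)$ is diagonalizable, $G$-equivariant quasi-coherent sheaves on the affine scheme $\Spec(S)$ are exactly $\Cl(X)$-graded $S$-modules, so $\QCoh([\Spec S/G])\simeq \Mod_{\Cl(X)}(S)$.

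Restriction to the open substack $[Y/G]$ is then handled by the standard localization statement for quasi-coherent sheaves under an open immersion $j\colon U\hookrightarrow W$. The functor $j^*$ is essentially surjective, since $j^*j_*\simeq \id$, and it identifies $\QCoh(U)$ with the quotient of $\QCoh(W)$ by the sheaves supported on the complement. Applying this $G$-equivariantly to $U=Y$, $W=\Spec S$, the complement is $V(B)$. A graded module has support in $V(B)$ exactly when every element is killed by a power of $B$, since $B$ is finitely generated by monomials. This gives $\QCoh(\sX_{\can})\simeq \Mod_{\Cl(X)}(S)/\Mod_{\Cl(X),B}(S)$. Simpliciality plays no role in this step; it only ensures that $\sX_{\can}$ is the usual canonical stack with coarse space $X$.

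For the second equivalence, the same argument applied to $S_{\Pic}$, $G_{\Pic}$ and $B_{\Pic}$ yields
\[
\QCoh(\sX_{\Pic})\simeq \Mod_{\Pic}(S_{\Pic})/\Mod_{\Pic,B_{\Pic}}(S_{\Pic}).
\]
It then remains to identify $\sX_{\Pic}$ with $X$, which I would do directly here. The plan is to show that $G_{\Pic}$ acts freely on $Y_{\Pic}=\Spec(S_{\Pic})-V(B_{\Pic})$ with geometric quotient $X$, which makes $[Y_{\Pic}/G_{\Pic}]\to X$ an isomorphism.

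Locally, cover $Y_{\Pic}$ by the loci $\check{x}_{\sigma^c}\neq 0$. Here I choose $\check{x}_{\sigma^c}$ of Cartier (ample) degree, which the proof of \Cref{lem:checkxsigma} provides. Exactly as in that lemma, the degree-zero part of $S_{\Pic}[\check{x}_{\sigma^c}^{-1}]$ is the coordinate ring of $U_\sigma$. The key claim is that this localization is a graded ring in which every degree $d\in\Pic(X)$ contains a unit. For such $d$, the divisor $D$ is Cartier and hence principal on $U_\sigma$, so $D|_{U_\sigma}=\mathrm{div}(\chi^{m})$. The monomial $x^{\langle m,\cdot\rangle+D}$ has degree $d$, and it is a unit after inverting $x_{\sigma^c}$ because its exponents vanish on the rays of $\sigma$. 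Strong gradedness by the group $\Pic(X)$ then gives
\[
S_{\Pic}[\check{x}_{\sigma^c}^{-1}]\cong (S_{\Pic}[\check{x}_{\sigma^c}^{-1}])_0\otimes \CC[\Pic(X)],
\]
after choosing units in a basis of degrees; one should check that $\Pic(X)$ is torsion-free, or else handle the torsion via the units directly. Consequently $\pi^{-1}(U_\sigma)\cong U_\sigma\times G_{\Pic}$ equivariantly, so $[\pi^{-1}(U_\sigma)/G_{\Pic}]\cong U_\sigma$. These isomorphisms glue because they are canonical, given by the invariant ring.

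The main obstacle is this strong-gradedness step, that is, producing degree-$d$ units for every Cartier $d$. It also requires some care when $\Pic(X)$ has torsion, where $G_{\Pic}$ is not a torus; there I would argue that a strongly graded ring is a $G_{\Pic}$-torsor over its degree-zero part, without needing a splitting.
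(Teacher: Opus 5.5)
Your proposal is correct. Its overall structure matches the paper's: the paper gets the first equivalence by substituting into \Cref{prop:equivSheaves-gradedMods-final}, which you essentially reprove through equivariant localization along $\Spec(S)-V(B)\hookrightarrow \Spec(S)$. It gets the second by applying the same statement to $S_{\Pic}$ and invoking the isomorphism $\psi\colon \sX_{\Pic}\to X$ of \Cref{thm:Pic-iso}.

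Where you differ is in how you prove that isomorphism. The paper works through toric stack data. It identifies the lattice $\widetilde N'$ of $\Spec(S_{\Pic})-V(B_{\Pic})$ via support functions, as in \Cref{lem:affineComparison}, and notes that $\beta$ is a combinatorial equivalence. It then shows that the sublattice spanned by $\widetilde\sigma$ maps isomorphically onto $N$, because support functions are linear and integral on $\sigma$. Finally it uses the split sequence $0\to \Pic(X)^\vee\to \widetilde N'\to N\to 0$ to get $X_{\widetilde\sigma}\cong X_\sigma\times G_{\Pic}$.

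Your argument is the ring-theoretic dual of this. The same underlying fact, that a Cartier divisor is cut out by a single character on $U_\sigma$, becomes a unit in every $\Pic(X)$-degree of $S_{\Pic}[\check{x}_{\sigma^c}^{-1}]$. Strong gradedness then yields the local trivialization directly. Your route is more elementary and self-contained: it needs neither the stacky-datum dictionary nor combinatorial equivalence. The torsor version you sketch, with a free graded ring over its degree-zero part, would also survive torsion in $\Pic(X)$. The paper's version has the advantage of living in the same lattice-and-fan framework it reuses in \Cref{rem:noKuwagaki} and in the GKZ section.

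Three small points:
\begin{itemize}
\item Your torsion concern is moot here. A semiprojective fan contains full-dimensional cones, so $\Pic(X)$ is free.
\item With $D|_{U_\sigma}=\mathrm{div}(\chi^m)$ and the convention $\mathrm{div}(\chi^m)=\sum\langle m,u_\rho\rangle D_\rho$, the unit should be $x^{D-\mathrm{div}(\chi^m)}$. This is a Laurent monomial whose exponents may be negative off $\sigma(1)$.
\item You should say explicitly why that unit lies in $S_{\Pic}[\check{x}_{\sigma^c}^{-1}]$ and not just in $S[x_{\sigma^c}^{-1}]$. The reason is that its degree is Cartier and $\check{x}_{\sigma^c}$ has Cartier degree, so denominators can be cleared by powers of $\check{x}_{\sigma^c}$.
\end{itemize}
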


Thus, if one is interested in understanding modules over the multigraded polynomial ring $S$, then $\sX_{\can}$ is the ``right'' geometric object to consider, whereas $X$ itself is more closely associated to the study of modules over the subring $S_{\Pic}$.

\begin{example}\label{ex:P113-3}
Let $X=\PP_v(1, 1, 3)$ be a weighted projective variety. 
In this case, its Cox ring is $S=\CC[x,y,z]$ with degrees $1, 1, 3$, while $S_{\Pic}$ is the Veronese subring consisting of all degrees that are multiples of $3$, so that $S_{\Pic} = \CC[x^3,x^2y,xy^2,y^3,z]$. 
\Cref{ex:P113-1} shows how multiple homogeneous ideals (namely, $I_1$, $I_2$, and $I_3$ in that example) all yield the same ideal sheaf on $\PP_v(1, 1, 3)$. This is not the case on the associated canonical stack, which is the weighted projective stack $\sX_{\can} = \PP(1,1,3)$, since \Cref{prop:moduleSheafCorrespondence} implies that the ideals $I_1$, $I_2$, and $I_3$ each determine distinct sheaves on the stack $\PP(1,1,3)$.  
Thus the stack $\PP(1,1,3)$ is more closely tied to the regular ring $S$, where the variety $\PP_v(1,1,3)$ is more closely tied to the singular subring $S_{\Pic}$.  
\end{example}

The first goal of this section is to explain how the $\QQ$-Cartier Cox ring $\checkS$ allows us to generalize \Cref{prop:moduleSheafCorrespondence} to the nonsimplicial toric case.  

\begin{theorem}[Module-Sheaf Correspondence, General Case]\label{thm:nonsimplicialModSheaf}
If $X$ is a semiprojective toric variety, not necessarily simplicial, then there are equivalences of categories: 
\[
\QCoh(\sX_{\QCart}) 
= \frac{ \Mod_{\QCart(X)}(\checkS)}{\Mod_{\QCart(X),\checkB}(\checkS)} 
\quad \text{ and } \quad 
\QCoh(X) 
= \frac{ \Mod_{\Pic(X)}(S_{\Pic})}{\Mod_{\Pic(X),B_{\Pic}}(S_{\Pic})}.
\]
\end{theorem}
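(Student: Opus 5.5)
The plan is to treat both equivalences as instances of one general statement. Let $R$ be a ring graded by a finitely generated abelian group $A$, let $T=\Hom_\ZZ(A,\CC^*)$ be the associated diagonalizable group, and let $J\subseteq R$ be a homogeneous ideal. Then
\[
\QCoh([\Spec R - V(J)/T]) \simeq \Mod_A(R)/\Mod_{A,J}(R).
\]
The first equivalence is this statement with $(R,A,J)=(\checkS,\QCart(X),\checkB)$. For the second, I apply it with $(S_{\Pic},\Pic(X),B_{\Pic})$ to get $\QCoh(\sX_{\Pic})$, and then use that $\psi\colon\sX_{\Pic}\to X$ is an isomorphism (to be proved in \Cref{thm:Pic-iso}) to identify $\QCoh(\sX_{\Pic})$ with $\QCoh(X)$.

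For the general statement I will follow the standard route.

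\textbf{Step 1.} Quasi-coherent sheaves on $[\Spec R/T]$ are the same as $T$-equivariant quasi-coherent sheaves on $\Spec R$. Since $T$ is diagonalizable, these are exactly $A$-graded $R$-modules: a comodule over $\CC[A]$ is an $A$-grading.

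\textbf{Step 2.} Let $U=\Spec R - V(J)$, a $T$-invariant open, and let $j\colon[U/T]\to[\Spec R/T]$ be the inclusion. Restriction $j^*$ is exact and has the fully faithful right adjoint $j_*$, so $j^*j_*\simeq \id$. Hence $j^*$ exhibits $\QCoh([U/T])$ as the quotient of $\Mod_A(R)$ by the kernel of $j^*$.

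\textbf{Step 3.} I identify that kernel. A graded module $M$ restricts to zero on $U$ exactly when its support lies in $V(J)$. Covering $U$ by the affine opens $D(f)$ for homogeneous $f\in J$, this means $M_f=0$ for each such $f$. That in turn means every element of $M$ is killed by a power of every homogeneous generator of $J$, and hence, because $J$ is finitely generated, by a power of $J$. So the kernel is $\Mod_{A,J}(R)$. Finite generation holds here because $\checkS$ and $S_{\Pic}$ are Veronese-type subrings of the Noetherian ring $S$ taken along finite-index subgroups, so they are finitely generated algebras.

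The only place where the nonsimplicial setting enters is that the first equivalence is literally the definition of $\sXCar$ combined with the general statement. So the main obstacle is not the equivalence itself but the identification $\sX_{\Pic}\cong X$.

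To prove $\sX_{\Pic}\cong X$ (if I cannot simply cite \Cref{thm:Pic-iso}), I would argue locally on the cover $\{U_\sigma\}$. Choose $\check{x}_{\sigma^c}$ as in \Cref{lem:checkxsigma}, but now with Cartier degree. This is possible because the proof of that lemma lands in the ample cone, and ample classes are Cartier on semiprojective toric varieties. The preimage of $U_\sigma$ in $\sX_{\Pic}$ is then $[\Spec S_{\Pic}[\check{x}_{\sigma^c}^{-1}]/G_{\Pic}]$.

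The key claim is that $G_{\Pic}$ acts freely there, and more precisely that $S_{\Pic}[\check{x}_{\sigma^c}^{-1}]$ contains a unit of every degree $d\in\Pic(X)$. The reason is that a Cartier divisor is locally principal on $U_\sigma$. So there is a torus-invariant divisor linearly equivalent to $d$ that is supported off $\sigma$, and its monomial $x^{D}$ becomes invertible after inverting $\check{x}_{\sigma^c}$. With units in every degree, the localized ring is strongly graded:
\[
S_{\Pic}[\check{x}_{\sigma^c}^{-1}] \cong (S_{\Pic}[\check{x}_{\sigma^c}^{-1}])_0\otimes\CC[\Pic(X)].
\]
Therefore the quotient stack is the affine scheme $\Spec$ of the degree-zero part, which equals $U_\sigma$ by the same argument as \Cref{lem:checkxsigma}. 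These local isomorphisms glue, since they are compatible with further localization, giving $\sX_{\Pic}\cong X$. The step I expect to require the most care is producing these units uniformly in all Cartier degrees, particularly handling torsion in $\Pic(X)$ when choosing the representative divisor.
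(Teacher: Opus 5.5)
Your proposal is correct and follows the paper's proof. The first equivalence is \Cref{prop:equivSheaves-gradedMods-final} applied directly to the definition of $\sXCar$, and the second combines it with the isomorphism $\psi\colon\sX_{\Pic}\to X$ of \Cref{thm:Pic-iso}. Your strongly graded local argument for $\psi$ is a valid alternative to the paper's lattice argument, and your torsion concern is moot: $\Pic(X)$ is free once $\Sigma$ has a full-dimensional cone \cite{CLSToricVarieties}*{Proposition 4.2.5}.

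One correction in Step 3: $\QCart(X)$ need not have finite index in $\Cl(X)$. In \Cref{ex:motivatingEx}, $\Cl(X)=\ZZ$ while $\QCart(X)=0$, so $\checkS$ (and likewise $S_{\Pic}$) is not a finite-index Veronese subring of $S$. It is nonetheless finitely generated, since it is the invariant ring of $S$ under the diagonalizable group $\Hom_\ZZ(\Cl(X)/\QCart(X),\CC^*)$; alternatively, apply Gordan's lemma to $\CDiv^\QQ_{T_N}(X)\cap\NN^{\Sigma(1)}$. Hence $\checkB$ is finitely generated, and Step 3 goes through as written.
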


Similar statements hold for the corresponding derived categories as well. 
These results are all examples of more general statements about sheaves on a quasi-affine variety $Y$ of the form $\Spec(R)-V(I)$ with an action of a sufficiently nice algebraic group $H$ with character group $\wH\ce \Hom(H, \CC^*)$.
We now explain how an $H$-action on $\Spec(R)$ induces a $H$-action on $R$. 
Since $H$ is a diagonalizable group, $R$ decomposes as a direct sum over the character group $\wH$, which induces a $\wH$-grading on $R$. 
For example, if $H=(\CC^*)^2$ acts on $R = \CC[x,y]$ by $(\lambda, \mu)\cdot x = \lambda^1\mu^{-2}x$ and $(\lambda, \mu)\cdot y = \lambda^3\mu^{1}y$, then $R = \CC[x,y]$ inherits a $\ZZ^2$-grading with $\deg(x) = (1,-2)$ and $\deg(y)=(3,1)$.  
Any $H$-equivariant $R$-module $M$ also inherits a $\wH$-grading in a natural way, which yields the equivalence between $H$-equivariant $R$-modules and $\wH$-graded $R$-modules, which we make more precise in the next two results.

\begin{prop} 
\label{prop:diagonalizableAffineCase}
If $H$ is a diagonalizable algebraic group with character group $\widehat H$ and $R$ is a ring with an $H$-action, then there is an equivalence of categories
\[
 \QCoh_H(\Spec (R)) \cong \Mod_{\widehat H}(R), 
\]
where $\QCoh_H(\Spec (R))$ is the category of $H$-equivariant quasi-coherent sheaves on $\Spec (R)$.
\end{prop}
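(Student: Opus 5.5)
The plan is to identify $H$-equivariant structures on a quasi-coherent sheaf $\widetilde{M}$ on $\Spec(R)$ with coactions of the Hopf algebra $\CC[\wH]=\cO(H)$ on $M$, and then to identify such coactions with $\wH$-gradings. Since $H$ is diagonalizable, $H=\Spec(\CC[\wH])$, with comultiplication $\chi^d\mapsto \chi^d\otimes\chi^d$, counit $\chi^d\mapsto 1$, and antipode $\chi^d\mapsto\chi^{-d}$. Quasi-coherent sheaves on the affine scheme $\Spec(R)$ are the same as $R$-modules. An $H$-action on $\Spec(R)$ is therefore a coaction $\mu_R\colon R\to R\otimes\CC[\wH]$ satisfying the coassociativity and counit axioms. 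An $H$-equivariant structure on $\widetilde{M}$ is an isomorphism $a^*\widetilde{M}\cong p^*\widetilde{M}$ on $H\times\Spec(R)$ satisfying the cocycle condition. Translating this through the equivalence for the affine scheme $H\times \Spec(R)=\Spec(R\otimes\CC[\wH])$, it becomes a $\CC$-linear map $\mu_M\colon M\to M\otimes \CC[\wH]$ that is coassociative, counital, and semilinear in the sense that $\mu_M(rm)=\mu_R(r)\mu_M(m)$.

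The next step is the standard decomposition argument, applied first to $R$ and then to $M$. Given a counital, coassociative coaction $\mu$ on a vector space $V$, set
\[
V_d\ce\{v\in V\mid \mu(v)=v\otimes\chi^d\}.
\]
For arbitrary $v$, write $\mu(v)=\sum_d v_d\otimes \chi^d$, a finite sum. Coassociativity gives $\sum_d\mu(v_d)\otimes\chi^d=\sum_d v_d\otimes\chi^d\otimes\chi^d$, and comparing coefficients of $\chi^d$ in the last factor shows $\mu(v_d)=v_d\otimes\chi^d$, so $v_d\in V_d$. The counit gives $v=\sum_d v_d$. Linear independence of the characters $\chi^d$ shows the sum of the $V_d$ is direct, so $V=\bigoplus_{d\in\wH}V_d$. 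Applied to $R$, this recovers the $\wH$-grading discussed before the proposition. Applied to $M$, semilinearity gives $R_eM_d\subseteq M_{e+d}$, so $M$ becomes a $\wH$-graded $R$-module. Conversely, a $\wH$-graded module $M$ gets the coaction $m\mapsto m\otimes\chi^{\deg m}$ on homogeneous elements, and the axioms are immediate.

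On morphisms, an $H$-equivariant map of sheaves is an $R$-linear map commuting with the coactions, which is exactly a map preserving the decompositions, i.e. a degree-preserving map. The two constructions are mutually inverse, which gives the equivalence $\QCoh_H(\Spec(R))\cong\Mod_{\wH}(R)$.

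The main obstacle is the first step: checking carefully that the cocycle condition on $H\times H\times\Spec(R)$ corresponds exactly to coassociativity of $\mu_M$, and that the identity condition corresponds to counitality. This requires tracking the three pullbacks along the maps $H\times H\times \Spec(R)\to \Spec(R)$ in terms of ring maps into $R\otimes\CC[\wH]\otimes\CC[\wH]$. I would do this by direct comparison of the ring maps, or cite the standard dictionary (e.g. the Stacks Project on equivariant quasi-coherent modules and comodules) instead of reproving it.
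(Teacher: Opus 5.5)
Your proposal is correct and takes essentially the same approach as the paper. Both turn the equivariant structure $\sigma^*\cF\cong p^*\cF$ into a coaction $M\to M\otimes\CC[\wH]=\bigoplus_{\chi} M\otimes\chi$, read off the grading as $M_\chi=\{m : \mu_M(m)=m\otimes\chi\}$, use $m\mapsto m\otimes\chi^{\deg m}$ for the converse, and match equivariant maps with degree-preserving maps. Your version is more careful than the paper's sketch: you use coassociativity and the counit to get $M=\bigoplus_d M_d$ and semilinearity to get $R_eM_d\subseteq M_{e+d}$, while the cocycle-to-comodule dictionary you flag is also taken for granted in the paper.
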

\begin{proof}
By definition, for a diagonalizable algebraic group, 
\[
\Gamma(H, \cO_H) = \CC[\widehat H] = \bigoplus_{\chi \in \widehat H} \CC. 
\]
Let $\cF$ be an $H$-equivariant quasi-coherent sheaf on $\Spec(R)$, meaning there is an isomorphism $\varphi\colon \sigma^*\cF\to p^*\cF$, where $\sigma, p\colon G \times Y \to Y$ 
denote the group action map and  projection onto the second factor, respectively. 
Setting $M= \Gamma(\Spec(R),\cF)$, $\Gamma(\varphi)$ induces a map 
\begin{align*}
\nu_M = \Gamma(\varphi)|_M\colon
M \subseteq M[\widehat{H}] 
&\to M[\widehat{H}] 
= \bigoplus_{\chi\in\widehat{H}} M\otimes \chi,
\\ 
m \qquad \qquad
&\mapsto \qquad \qquad
\textstyle\sum m_\chi\otimes \chi. 
\end{align*}
Thus $M$ has the structure of a $\widehat{H}$-graded $R$-module via 
$M = \bigoplus_{\chi \in \widehat H} \, \nu_M^{-1}(\chi \otimes M)$. 

Conversely, a $\widehat{H}$-graded $R$-module $M=\bigoplus_{\chi \in \widehat H} M_\chi$ induces a map 
$\nu_M\colon M[\widehat{H}]\to M[\widehat{H}]$ where a homogeneous $m\in M_\chi$ has $\nu_M(m\otimes 1) = m\otimes \chi$. Taking the associated sheaf then induces an element of 
$\QCoh_H(\Spec (R))$.  

Hence the usual equivalence $\Gamma$ takes $H$-equivariant sheaves to $\widehat{H}$-graded modules, and one checks that under this construction, $H$-equivariant morphisms of quasi-coherent sheaves on $\Spec(R)$ are taken exactly to degree zero morphisms of $\widehat{H}$-graded $R$-modules. 
\end{proof}

While not stated in precisely this form in our search of the literature, the following result is well known to experts and follows from other more general results about stacks, including \cite{stacks-project}*{06WT}, \cite{vistoli}*{Theorem 4.46},  \cite{AOV2008}*{Section~2.1}, or \cite{alper}*{Theorem 7.1.11}. 

\begin{prop} 
\label{prop:equivSheaves-gradedMods-final}
Let $H$ be a diagonalizable algebraic group acting on $\Spec(R)$, so that $\wH = \Hom(H,\CC^*)$, which induces an $\wH$-grading on $\Spec(R)$. 
If $J\subseteq R$ is an ideal that is homogeneous with respect to the $\wH$-grading, then 
\begin{enumerate}
\item The category of quasi-coherent sheaves on the global stack quotient $[\Spec(R)/H]$ is naturally equivalent to the category of $H$-equivariant $R$-modules, which is naturally equivalent to the category of $\wH$-graded $R$-modules.  
\item If $Y=\Spec(R) - V(J)$, then there are equivalences:
\[
\QCoh( [Y/H] ) 
\cong 
\QCoh_H(Y)
\cong
\Mod_{\wH}(R) / 
\Mod_{(\wH,I)}(R). 
\]
where $\QCoh_H(Y)$ is the category of $H$-equivariant quasi-coherent sheaves on $Y$.
\hfill$\square$
\end{enumerate}
\end{prop}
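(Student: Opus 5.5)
The plan is to reduce both parts to the affine equivariant statement in \Cref{prop:diagonalizableAffineCase}, and then to handle the open subset $Y=\Spec(R)-V(J)$ by a localization argument. (In (2), the torsion subcategory should read $\Mod_{(\wH,J)}(R)$.)

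For part (1), the key input is that $\Spec(R)\to[\Spec(R)/H]$ is a smooth atlas. Descent along this atlas identifies quasi-coherent sheaves on $[\Spec(R)/H]$ with quasi-coherent sheaves $\cF$ on $\Spec(R)$ together with descent data on $H\times\Spec(R)\rightrightarrows\Spec(R)$. Such descent data is precisely an isomorphism $\sigma^*\cF\cong p^*\cF$ satisfying the cocycle condition, i.e. an $H$-equivariant structure. This gives $\QCoh([\Spec(R)/H])\cong\QCoh_H(\Spec(R))$, which is the content of the standard references cited above. Composing with \Cref{prop:diagonalizableAffineCase} then yields the equivalence with $\Mod_{\wH}(R)$.

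For part (2), the same descent argument applied to the atlas $Y\to[Y/H]$ gives $\QCoh([Y/H])\cong\QCoh_H(Y)$. Let $j\colon Y\hookrightarrow\Spec(R)$ be the inclusion; it is $H$-equivariant because $J$ is homogeneous, so $V(J)$ is $H$-stable. The functors $j^*$ and $j_*$ both preserve equivariant structures, and $j$ is quasi-compact because $J$ may be taken finitely generated (as $R$ is Noetherian in our applications). Since $j^*j_*\cong\id$, the functor $j^*\colon\QCoh_H(\Spec R)\to\QCoh_H(Y)$ is essentially surjective, and it exhibits $\QCoh_H(Y)$ as the Serre (Gabriel) quotient of $\QCoh_H(\Spec R)$ by the kernel of $j^*$. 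This is the general principle that restriction to an open set is a localization with fully faithful right adjoint $j_*$; the equivariant version follows by running the same argument with descent data attached.

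It remains to identify the kernel of $j^*$ under \Cref{prop:diagonalizableAffineCase}. A module $M$ satisfies $j^*\widetilde M=0$ exactly when $M_f=0$ for each homogeneous generator $f$ of $J$. Equivalently, every element of $M$ is killed by a power of each $f$, hence by a power of $J$, so the kernel is $\Mod_{(\wH,J)}(R)$. This gives the last equivalence. I expect the main obstacle to be verifying that the quotient-category property survives the passage to equivariant sheaves: one must check that the unit and counit of $j^*\dashv j_*$ are $H$-equivariant, so that the adjunction lives in the equivariant categories. I would do this by noting that $j$ commutes with the action and projection maps, and that flat base change identifies $\sigma^*j_*$ with $j_*\sigma^*$.
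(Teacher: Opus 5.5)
Your proposal is correct. The paper states this proposition without proof. It defers to general results on quasi-coherent sheaves on quotient stacks, namely descent along the presentation by $H\times Y\rightrightarrows Y$, combined with \Cref{prop:diagonalizableAffineCase}. Your argument for (1), and for the first equivalence in (2), is exactly the content of those citations, so there the two routes coincide.

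For the second equivalence in (2) you supply what the paper leaves implicit. First, you realize $j^*\colon\QCoh_H(\Spec R)\to\QCoh_H(Y)$ as a Gabriel localization through its fully faithful right adjoint $j_*$, with the equivariance of $j_*$ and of the adjunction coming from flat base change along the action and projection maps. Second, you identify $\ker j^*$ with $\Mod_{(\wH,J)}(R)$. This gives a self-contained proof.

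It also exposes a hypothesis the statement suppresses. Your reading of $I$ as $J$ is right, and your finite-generation assumption is genuinely needed, not merely convenient. Without it $j$ need not be quasi-compact. Moreover, the kernel of $j^*$ then consists of modules in which each element is killed by a power of each $f\in J$, which is weaker than being killed by a power of $J$. For instance, take $R=\CC[x_1,x_2,\dots]$, $J=\<x_1,x_2,\dots\>$ and $M=R/\<x_i^i \mid i\geq 1\>$. Every $f\in J$ is nilpotent on $M$, so $M_f=0$, yet $x_{n+1}^n\in J^n$ does not annihilate $\bar 1$.

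In the paper's applications $R$ is a finitely generated $\CC$-algebra, so this causes no trouble there.
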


\begin{proof}
[Proofs of \Cref{prop:moduleSheafCorrespondence} and \Cref{thm:nonsimplicialModSheaf}] 
\Cref{prop:moduleSheafCorrespondence} is a special case of \Cref{thm:nonsimplicialModSheaf}. 
To prove \Cref{thm:nonsimplicialModSheaf}, since $\sX_{\QCart}$ is defined as $[\Spec(\checkS) - V(\checkB)/\checkG]$, so the first statement follows from substitution; further, the fact that $X \cong [\Spec(S_{\Pic}) - V(B_{\Pic})/G_{\Pic}]$ (see \Cref{thm:Pic-iso}) yields the second statement. 
\end{proof}

Corollaries of \Cref{prop:equivSheaves-gradedMods-final} are analogues of the Nullstellensatz results \cite{cox:homog}*{Proposition 3.5 and Theorem 3.7} and \cite{CLSToricVarieties}*{Propositions 5.3.9-10} that describe quasi-coherent sheaves and closed subschemes on $X$ and $\sX_{\QCart}$.  
We begin with a general statement. 

\begin{cor} \label{cor:stackyNullstellensatz}
Let $H$ be a diagonalizable algebraic group and $Y = \Spec (R) - V(J)$.
\begin{enumerate}[noitemsep]
\item 
Every ideal sheaf on the stack $[Y/H]$ has the form $I|_{[Y/H]}$ for some $I\subseteq R$, and $I|_{[Y/H]} = I'|_{[Y/H]}$ if and only if $I\colon J^\infty = I'\colon  J^\infty$.
Furthermore, $H$-invariant closed subschemes of $Y$ are in bijection with $J$-saturated homogeneous ideals $I \subseteq R$.
\item For every coherent sheaf $\cF$ on the stack $[Y/H]$, there is a finitely generated, multigraded $R$-module $M$ such that $\cF = M|_{[Y/H]}$.  Moreover, $M|_{[Y/H]}=0$ if and only if $J^\ell M=0$ for some $\ell$.
\end{enumerate}
\end{cor}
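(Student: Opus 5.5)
The plan is to deduce both parts from \Cref{prop:equivSheaves-gradedMods-final}(2), which identifies $\QCoh([Y/H])$ with the Verdier quotient $\Mod_{\wH}(R)/\Mod_{\wH,J}(R)$. Concretely, the functor $M\mapsto M|_{[Y/H]}$ is exact: it is the composite of the equivalence of \Cref{prop:diagonalizableAffineCase} with restriction along the open immersion $Y\hookrightarrow \Spec(R)$. Its kernel consists of graded modules all of whose elements are killed by a power of $J$. It also has a right adjoint $\Gamma_*$, sending $\cF$ to the graded module of global sections of the corresponding $H$-equivariant sheaf on $Y$. The counit of this adjunction is an isomorphism, because $Y\hookrightarrow\Spec(R)$ is quasi-compact and open, so $j^*j_*\cong\id$.

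For (1), I take an ideal sheaf $\cI\subseteq\cO_{[Y/H]}$ and set $I:=\Gamma_*(\cI)\cap R$. More precisely, $I$ is the preimage of $\Gamma_*\cI$ under $R\to\Gamma_*\cO_{[Y/H]}$, and it is homogeneous. Since restriction is exact and $\Gamma_*\cO$ agrees with $R$ up to $J$-torsion-type discrepancy, I expect $I|_{[Y/H]}=\cI$. For the comparison, if $I|=I'|$ then $(I+I')/I$ restricts to zero, so every element of $I'$ is killed by some $J^N$ modulo $I$. Thus $I'\subseteq I\colon J^\infty$, and by symmetry the saturations agree. Conversely, $(I\colon J^\infty)/I$ is $J$-torsion, so $I|=(I\colon J^\infty)|$. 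The bijection of $H$-invariant closed subschemes of $Y$ with $J$-saturated homogeneous ideals then follows by choosing the saturated representative. Here I use that $H$-invariant closed subschemes correspond to $H$-equivariant ideal sheaves on $Y$, i.e.\ ideal sheaves on $[Y/H]$. I also use that two saturated ideals with the same restriction coincide by the argument above.

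For (2), given a coherent $\cF$, I start from $\Gamma_*\cF$, which is graded but possibly not finitely generated. Since $R$ is Noetherian and $\cF$ is coherent on the quasi-compact $Y$, I can cover $Y$ by the finitely many $D(f_i)$ with $f_i$ homogeneous generators of $J$. On each $D(f_i)$, the sections form a finitely generated graded $R_{f_i}$-module. I clear denominators to pick finitely many homogeneous elements of $\Gamma_*\cF$ generating it over each $D(f_i)$, and let $M$ be the submodule they generate. Then $M|\to\cF$ is surjective on each chart and injective because $M\subseteq\Gamma_*\cF$ and restriction is exact, so $M|\cong\cF$. For the vanishing criterion, $M|=0$ means $M$ is $J$-torsion elementwise. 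Finite generation of $M$, say by $m_1,\dots,m_r$ with $J^{\ell_i}m_i=0$, gives $J^\ell M=0$ for $\ell=\max\ell_i$; the converse is immediate.

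The main obstacle I expect is the step in (2) producing a finitely generated module: checking carefully that finitely many homogeneous sections generate on each chart $D(f_i)$ of the quotient stack. I would handle this by passing through \Cref{prop:diagonalizableAffineCase} on each affine chart $\Spec(R_{f_i})$. There equivariant coherent sheaves are finitely generated graded $R_{f_i}$-modules, and homogeneous generators can be multiplied by powers of $f_i$ so that they extend to global graded sections.
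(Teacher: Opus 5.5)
Your proposal is correct and follows essentially the same route as the paper. Both arguments deduce everything from the equivalence in \Cref{prop:equivSheaves-gradedMods-final}. Both get finite generation in (2) by covering $Y$ with the charts $D(f_i)$ for homogeneous generators $f_i$ of $J$ and clearing denominators, and both get the vanishing criterion from elementwise $J$-torsion plus finite generation. Your treatment of (1) is more explicit than the paper's remark that saturation removes the primary components supported on $V(J)$: you construct $I$ as the preimage of $\Gamma_*\mathcal{I}$ under $R\to\Gamma_*\cO$ and compare saturations using exactness of restriction, but this is the same underlying idea made precise.
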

\begin{proof}
We begin with (2). 
As a direct consequence of \Cref{prop:equivSheaves-gradedMods-final}, every quasi-coherent sheaf $\cF$ is of the form $M|_{[Y/H]}$ for some graded $R$-module $M$. 
However, it requires an additional step to show that if $\cF$ is coherent, then $M$ can be chosen to be finitely generated.  While this is fairly standard (for example, \cite{CLSToricVarieties}*{Proposition 6.A.4} captures the main idea), we still provide a proof for the sake of completeness. 
If $J=\< a_1, \dots, a_r\>$, then the basic open affine sets $D(a_i)$ are an open cover of $Y$.  
Now choose generators $m_{i,j}\in \cF(D(a_i)) = M[a_i^{-1}]$.  
Then there is a sufficiently large integer $N$ such that for each pair $i,j$, $a_i^Nm_{i,j}$ belongs to $M$.  
Consider the submodule $M'$ of $M$ generated by all of the $a_i^Nm_{i,j}$ for all $i,j$.  
Since $M'|_{D(a_i)} = M|_{D(a_i)}$ for all $i$, it follows that $M'$ and $M$ induce the same sheaf on $U$, so that $M'$ is finitely generated.

By \Cref{prop:equivSheaves-gradedMods-final}, the module $M$ induces the zero sheaf on $Y$ if and only if $M$ lies in $\Mod_{(\wH,I)}(R)$, which is equivalent to every element of $M$ being annihilated by some power of $J$. 
Since $M$ is finitely generated, this is the same as $J^\ell M =0$ for some $\ell\gg 0$.

For (1), note that by \Cref{prop:equivSheaves-gradedMods-final}, every ideal sheaf has the form $I|_{[Y/H]}$ for some homogeneous ideal $I$. 
This implies that homogeneous ideals of $R$ correspond to subschemes of $Y$, up to primary components supported on $V(J)$. 
The operation of saturation with respect to $J$ removes these primary components, so that $J$-saturated ideals of $R$ correspond uniquely to subschemes of $[Y/H]$.
\end{proof}

As an immediate corollary of this fact, we obtain the following corollary for $\sX_{\QCart}$.

\begin{cor}\label{thm:nullstellensatz-stack} 
Let $X$ be a semiprojective toric variety, not necessarily simplicial, with $\QQ$-Cartier stack $\sXCar$. 
\begin{enumerate}[noitemsep]
\item 
Every ideal sheaf on $\sXCar$ has the form $I|_{\sXCar}$ for some $I\subseteq \checkS$, and $I|_{\sXCar} = I'|_{\sXCar}$ if and only if $I\colon\checkB^\infty = I'\colon\checkB^\infty$.
In particular, closed substacks of $\sXCar$ are in bijection with $\checkG$-invariant closed subschemes of $\Spec(\checkS) - V(\checkB)$ and $\checkB$-saturated homogeneous ideals $I \subseteq \checkS$.
\item For every coherent sheaf $\cF$ on  $\sXCar$, there is a finitely generated, multigraded $\checkS$-module $M$ such that $\cF = M|_{\sXCar}$.  Moreover, $M|_{\sXCar}=0$ if and only if $\checkB^\ell M=0$ for some $\ell$.
\hfill$\square$
\end{enumerate}
\end{cor}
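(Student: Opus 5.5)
This is a direct specialization of \Cref{cor:stackyNullstellensatz}. Set $R=\checkS$, $J=\checkB$ and $H=\checkG=\Hom_\ZZ(\QCart(X),\CC^*)$. Then $Y=\Spec(\checkS)-V(\checkB)$, and by definition $\sXCar=[Y/\checkG]$.

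First I check the hypotheses of \Cref{cor:stackyNullstellensatz}. The group $\QCart(X)$ is a subgroup of the finitely generated abelian group $\Cl(X)$, so it is finitely generated. Hence $\checkG$ is a diagonalizable algebraic group whose character group is $\wH=\QCart(X)$. The $\checkG$-action on $\Spec(\checkS)$ is exactly the one induced by the $\QCart(X)$-grading, as described before \Cref{lem:invariants}. The ideal $\checkB$ is homogeneous, since $\checkB=\bigoplus_{d\in\Amp(X)\cap\QCart(X)}H^0(X,\cO_X(d))$. Finally, $\checkS$ is noetherian, which is needed for the finite-generation statements and to write $\checkB=\<a_1,\dots,a_r\>$. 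I would justify this by noting that $\checkS$ is the invariant ring of $S$ under the finite diagonalizable group $\Hom(\Cl(X)/\QCart(X),\CC^*)$, and that $\Cl(X)/\QCart(X)$ is finite when restricted appropriately. Alternatively, $\checkS$ is a finitely generated Veronese-type subring of the affine semigroup ring $S$, generated by the monomials in the saturated subsemigroup of degrees lying in $\QCart(X)$, which is finitely generated by Gordan's lemma.

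With these hypotheses in place, part (2) is immediate from \Cref{cor:stackyNullstellensatz}(2). The first sentence of part (1) is likewise \Cref{cor:stackyNullstellensatz}(1). For the ``in particular'' clause I compose two bijections. Closed substacks of $[Y/\checkG]$ correspond to ideal sheaves, and hence, via the equivalence $\QCoh([Y/\checkG])\cong\QCoh_{\checkG}(Y)$ of \Cref{prop:equivSheaves-gradedMods-final}, to $\checkG$-equivariant ideal sheaves on $Y$, that is, to $\checkG$-invariant closed subschemes of $Y$. \Cref{cor:stackyNullstellensatz}(1) then identifies these with $\checkB$-saturated homogeneous ideals of $\checkS$.

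The only step with any content is confirming that $\checkS$ is noetherian and that the grading group is finitely generated; everything else is bookkeeping. I expect the Gordan's lemma argument to settle the noetherian point: the monomials of $S$ with degree in $\QCart(X)$ form $\NN^{\Sigma(1)}\cap\deg^{-1}(\QCart(X))$, which is the intersection of a lattice with a rational cone and hence a finitely generated monoid.
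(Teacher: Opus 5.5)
Your proposal is correct and follows the paper's route: the statement is recorded without separate proof as the specialization of \Cref{cor:stackyNullstellensatz} to $R=\checkS$, $J=\checkB$, $H=\checkG$, and the hypotheses you make explicit are the right ones to check (finitely generated character group $\QCart(X)$, homogeneity of $\checkB$, and noetherianity of $\checkS$ via Gordan's lemma applied to $\CDiv^\QQ_{T_N}(X)\cap\NN^{\Sigma(1)}$). Drop your first justification of noetherianity, since $\Cl(X)/\QCart(X)$ need not be finite (in \Cref{ex:motivatingEx} it is $\ZZ$), so $\checkS$ is in general the invariant ring of a non-finite diagonalizable group; your Gordan's lemma argument already settles the point.
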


We now translate the geometric quotient property from \Cref{thm:orbits} into the language of stacks, which requires the following definition.

\begin{defn}[\cite{olsson}*{Definition 11.1.1}]\label{defn:coarseModuli}
Let $\cX$ be an algebraic stack over $\Spec(\CC)$, 
and let $X$ be a variety. 
A morphism $\pi\colon \cX \to X$ is called a \defi{coarse moduli map} if it satisfies the following two properties:
\begin{enumerate}
\item The morphism $\pi$ induces a bijection between the set of isomorphism classes of objects in the groupoid $\cX(\CC)$ and the set of $\CC$-valued points of $X$:
\[
|\cX(\CC)| \xrightarrow{\sim} X(\CC).
\]
\item The map $\pi$ is universal among all morphisms from $\cX$ to varieties. That is, for any variety $Z$ over $R$ and any morphism $f\colon \cX \to Z$, there exists a unique morphism of varieties $g\colon X \to Z$ such that the following diagram commutes:
\[
\begin{tikzcd}[row sep=large, column sep=large]
\cX \arrow[r, "\pi"] \arrow[dr, "f"'] & X \arrow[d, "g", dashed] \\
& Z.
\end{tikzcd}
\]
\end{enumerate}
\end{defn}

\begin{cor}\label{cor:coarseModuli}
The map $\sX_{\QCart} \to X$ is a coarse moduli map.
\end{cor}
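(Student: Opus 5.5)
The plan is to deduce the corollary from the geometric quotient statement \Cref{thm:orbits}, together with the universal property in \Cref{prop:goodCatQuot}. First I need to name the map. Write $Y=\Spec(\checkS)-V(\checkB)$ and let $q\colon Y\to X$ be the good categorical quotient. Since $q$ is $\checkG$-invariant, it descends to a morphism $\pi\colon \sXCar=[Y/\checkG]\to X$. The two conditions of \Cref{defn:coarseModuli} can then be checked one at a time.

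For condition (1), objects of the groupoid $\sXCar(\CC)$ are $\checkG$-torsors over $\Spec\CC$ together with equivariant maps to $Y$. Because $\CC$ is algebraically closed, every such torsor is trivial. Hence isomorphism classes of objects correspond to $\checkG$-orbits of $\CC$-points of $Y$, that is, $|\sXCar(\CC)|=Y(\CC)/\checkG(\CC)$. The definition of geometric quotient (\Cref{defn:geometricQuotient}), established in \Cref{thm:orbits}, says that $q$ is surjective and that $\checkG$ acts transitively on its fibers. Together these give a bijection $Y(\CC)/\checkG(\CC)\to X(\CC)$. The bijection is compatible with $\pi$ because $q$ factors as $Y\to[Y/\checkG]\to X$.

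For condition (2), let $f\colon\sXCar\to Z$ be a morphism to a variety. Composing with the atlas $Y\to[Y/\checkG]$ gives a $\checkG$-invariant morphism $Y\to Z$. By \Cref{def:goodCatQuot}(5), which holds by \Cref{prop:goodCatQuot}, it factors uniquely as $g\circ q$. It remains to show $f=g\circ\pi$ as morphisms from the stack. Both composites restrict to the same morphism on the smooth atlas $Y$. Morphisms from a quotient stack to a scheme (or algebraic space) are determined by the invariant morphism on $Y$, since $Z$ has trivial automorphism groupoids and morphisms descend along the fppf cover $Y\to[Y/\checkG]$. Hence $f=g\circ\pi$. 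Uniqueness of $g$ follows from uniqueness in \Cref{def:goodCatQuot}(5), because any $g'$ with $f=g'\circ\pi$ also satisfies $f|_Y=g'\circ q$.

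The main obstacle is the dictionary step identifying morphisms $[Y/\checkG]\to Z$ with $\checkG$-invariant morphisms $Y\to Z$, together with the description of $|\sXCar(\CC)|$. I would cite standard descent facts here, for example \cite{olsson} or \cite{alper}, rather than reprove them. Alternatively, one can invoke the standard result that a good quotient $Y\to X$ with closed (in fact free-modulo-finite) orbits induces a coarse moduli space \cite{alper}. In that approach, the stabilizers of points of $Y$ being finite must be checked. This finiteness is the content of the finiteness of $\zeta$ in the proof of \Cref{thm:orbits}, which shows that $\checkG\to\pi^{-1}(p)$ is finite.
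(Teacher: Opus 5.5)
Your proposal is correct and follows essentially the same route as the paper. Condition (1) comes from identifying $\CC$-points of $[Y/\checkG]$ with $\checkG$-orbits and applying the geometric quotient property of \Cref{thm:orbits}, and condition (2) from matching morphisms out of the quotient stack with $\checkG$-invariant morphisms out of $Y$ and invoking the universal property of the good categorical quotient in \Cref{prop:goodCatQuot}; you just spell out the torsor-triviality and descent steps that the paper compresses into the remark that the two universal properties ``line up precisely.''
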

\begin{proof}
Given that $\sX_{\QCart} = \Spec(\checkS) - V(\checkB) \to X$ is a geometric quotient, by \Cref{thm:orbits}, the $\CC$-points of $\sX_{\QCart}$ correspond to $\checkG$-orbits, and these are precisely the points of $X$ by \cite{CLSToricVarieties}*{Proposition 5.0.8}. 
This yields condition (1) of Definition~\ref{defn:coarseModuli}.  
For condition (2), the universal property of the quotient stack $[\Spec(\checkS) - V(\checkB)  / \checkG]$ lines up precisely with the universal property of the good categorical quotient.
\end{proof}

While \Cref{cor:coarseModuli} implies  \Cref{thm:coarseModuli}.(1), an alternate proof of this fact that utilizes a toric stacky datum description of $\sX_{\QCart}$ can be found in Section~\ref{sec:coarseModuli}.

\Cref{prop:geometricquotientClosedSub} is sufficient to imply \Cref{thm:coarseModuli}.(1). 
Since the closed points of $\sXCar = [\Spec(\checkS) - V(\checkB) / \checkG]$ correspond to $\checkG$-orbits, the bijection on those points translates exactly to the property of being a geometric quotient by \cite{CLSToricVarieties}*{Proposition 5.0.8}.
Thus, for a good categorical quotient like $\pi\colon \sXCar\to X$, being a coarse moduli map is equivalent to being a geometric quotient. 

\begin{cor}\label{cor:coarseModuli-bijection}
If $Y = \Spec (R) - V(J)$, $[Y/H] \to X$ is a coarse moduli map, and $H$ is a diagonalizable algebraic group, then there is a bijection between closed subspaces of $X$ and $\wH$-homogeneous ideals $I \subseteq J \subseteq R$.
\end{cor}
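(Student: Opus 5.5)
The plan is to reduce the statement to the argument already given for \Cref{thm:nullstellensatz-var}, with the coarse moduli hypothesis doing the work that \Cref{thm:orbits} did there. Here ``closed subspaces'' means closed subsets (reduced closed subvarieties) and the ideals $I$ are radical; without radicality the statement is false, as \Cref{ex:P113-1} shows. First I would use condition (1) of \Cref{defn:coarseModuli}: isomorphism classes of $\CC$-points of $[Y/H]$ are exactly $H$-orbits of $\CC$-points of $Y$, so $\pi$ induces a bijection between $H$-orbits in $Y$ and points of $X$. Since $H$ is diagonalizable, hence reductive, and $R$ is finitely generated, I would also use that the coarse space of $[Y/H]$ is computed locally by invariants. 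On a basic open $D(a)\subseteq Y$ with $a\in J$ homogeneous, the associated good moduli/categorical quotient is $\Spec(R[a^{-1}]^H)=\Spec(R[a^{-1}]_0)$. Combining the two, $Y\to X$ is a good categorical quotient whose fibers are single orbits, i.e.\ a geometric quotient in the sense of \Cref{defn:geometricQuotient}, exactly the situation of \Cref{thm:orbits}.

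Next I would apply \Cref{prop:geometricquotientClosedSub}. This gives a bijection between closed subsets of $X$ and $H$-invariant closed subsets of $Y$, via $W\mapsto \pi^{-1}(W)$ and $Z\mapsto \pi(Z)$. To make this work, I need $\pi(Z)$ to be closed for $H$-invariant closed $Z$, which is property (3) of \Cref{def:goodCatQuot}, together with $\pi^{-1}\pi(Z)=Z$, which follows from the orbit bijection.

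The remaining step is the algebraic translation, copied from the proof of \Cref{thm:nullstellensatz-var}. Closed subsets $Z'\subseteq Y$ correspond bijectively to closed subsets of $\Spec R$ containing $V(J)$, via $Z'\mapsto \overline{Z'}\cup V(J)$. By the ordinary Nullstellensatz, those closed subsets correspond to radical ideals $I\subseteq \sqrt J$; when $J$ is radical, as for $\checkB$ and $B$, this is $I\subseteq J$. Finally, $H$-invariance of $V(I)$ is equivalent to $I$ being $\wH$-homogeneous. For this I would use that $H$ is diagonalizable, so the ideal of an $H$-stable closed set is $H$-stable. An $H$-stable ideal decomposes into isotypic pieces via \Cref{prop:diagonalizableAffineCase}, and its radical stays homogeneous.

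I expect the main obstacle to be the first step: extracting from the abstract coarse moduli property that $\pi$ maps $H$-invariant closed sets to closed sets. Condition (2) of \Cref{defn:coarseModuli} only gives universality. I would try to settle this by invoking Keel--Mori or Alper's theorem that a coarse moduli space of a tame stack is a good moduli space, so that $\pi$ is universally closed and submersive. If that is unavailable, I would instead identify $X$ with $\Spec$ of the invariants glued over the $D(a)$ and use \cite{CLSToricVarieties}*{Proposition 5.0.9} chartwise.
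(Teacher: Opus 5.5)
Your proposal follows essentially the same route as the paper. The coarse moduli property identifies closed subsets of $X$ with $H$-invariant closed subsets of $Y$, and these are then translated into radical $\wH$-homogeneous ideals $I\subseteq J$; the paper does the second step by citing \Cref{cor:stackyNullstellensatz} (which is phrased for $J$-saturated ideals), while you redo the direct translation from the first proof of \Cref{thm:nullstellensatz-var}. The paper's one-line proof takes for granted the step you flag (that $\pi$ sends $H$-invariant closed sets to closed sets), and your Keel--Mori/good-moduli justification of it (valid in the finite-stabilizer, DM setting where the paper applies this corollary) together with your insistence on radical ideals and $I\subseteq\sqrt{J}$ only make the argument more precise.
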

\begin{proof}
Since $Y\to X$ is a coarse moduli map, the closed subspaces of $X$ are in bijection with $H$-invariant closed subspaces of $Y$. Further, closed subspaces of $Y$ are in bijection with $I \subseteq J \subseteq R$ by \Cref{cor:stackyNullstellensatz}.  
\end{proof}

\section{The \texorpdfstring{$\QQ$}{Q}-Cartier stack via toric stack datum} 
\label{sec:construction}

Just as toric varieties are generally described in terms of fans, toric stacks can be described in terms of toric stack data. 
In this section, we recall the notion of toric stack data and morphisms between them, and we describe $\sX_{\QCart}$ in terms of explicit toric stack data. 
This provides a more combinatorial perspective on $\sX_{\QCart}$. 
In Section~\ref{sec:coarseModuli}, we will employ this description to prove \Cref{thm:coarseModuli}. 

Following the description of Geraschenko and Satriano \cite{GS, GSII}, toric stacks are global quotients $[X/G]$, where $X$ is a toric variety and $G$ is an algebraic torus. 
Note, for example, that all of the stacks arising in this paper have been of this form. 
To provide a combinatorial description of such a toric stack, we thus need to provide the fan of $X$ and the action of $G$ on $X$, leading to the following definition. 

\begin{defn}[\cite{GS}*{Definitions 2.4 and 2.5}] \label{def:stackDatum}
A \newterm{toric stack datum} $(\tN, N, \tSigma, \beta)$ is
\vspace*{-.7mm}
\begin{enumerate}[noitemsep]
\item two finitely generated free abelian groups $N$ and $\tN$,
\item a fan $\tSigma$ in $\tN_{\RR}\ce \tN \otimes_{\ZZ} \RR $,
\item a group homomorphism $\beta\colon \tN \to N$ with finite cokernel. 
\end{enumerate}
For the toric variety $X_{\tSigma}$ with maximal dense torus $T_{\tN} \ce \tN \otimes_{\ZZ} \CC^*$, $G_\beta\ce \ker \beta \otimes_{\ZZ} \CC^*$ is a subtorus of $T_{\tN}$. 
The toric stack associated to the toric stack datum $(\tN, N, \tSigma, \beta)$ is $[X_{\tSigma}/ G_\beta]$. 
\end{defn}

\begin{example}\label{ex:P112-stackyDatum}
Let $\tN =\ZZ^3$, $N=\ZZ^2$, $\beta = \left[\begin{smallmatrix}
1 & -1 & \phantom{-}0\\ 0 & \phantom{-}2 & -1
\end{smallmatrix}\right]$, $\tSigma$ be the face fan of $\RR_{\geq 0}^3$ minus the interior, and $\Sigma$ be the projection under $\beta_\RR$ of $\tSigma$. 
Then there is an exact sequence 
\[0\to G_\beta\xrightarrow{\iota} (\CC^*)^3 \to (\CC^*)^2\to 0,
\]
where $G_\beta\cong\CC^*$, and via this identification, $\iota(t)=(t,t,t^2)$. 
Then $\cX_{\tSigma, \beta} = [(\AA^3-\boldzero)/\CC^*]$ 
is the weighted projective stack $\PP(1,1,2)$. 
\end{example}

\begin{defn}[\cite{GS}*{Definition 3.2}]
\label{def:morph-stackData}
A \newterm{morphism of toric stack data} is a pair of maps $(\Phi, \phi)\colon (\tN, N, \tSigma, \beta) \to (\tN', N', \tSigma', \beta')$ is
a commutative diagram of abelian groups
\begin{center}
\begin{tikzcd}
\tN \ar[r, "\beta"] \ar[d, "\Phi"'] & N \ar[d, "\phi"] \\
\tN' \ar[r, "{\beta'}"] & N'
\end{tikzcd}
\end{center}
such that $\Phi\colon \tN \to \tN'$ induces a map of fans $\tSigma \to \tSigma'$.
\end{defn}

\begin{example}\label{ex:P112-morph}
We continue with \Cref{ex:P112-stackyDatum}, which involved the toric stack datum associated to the weighted projective stack $\PP(1,1,2)$.  Additionally, consider the toric stack datum associated to the weighted projective variety $\PP_v(1,1,2)$.  This consists of  $(\ZZ^2,\ZZ^2,\Sigma,\id_{\ZZ^2})$, where $\Sigma\subseteq \RR^2$ is the standard complete fan for this weighted projective space.  
A morphism between these toric stacks
$\PP(1, 1, 2)\to \PP_v(1, 1, 2)$ is given by the commutative diagram 
\begin{center}
\begin{tikzcd}
\ZZ^3 
\ar[r, "\beta"] 
\ar[d, "\beta"'] & \ZZ^2 \ar[d, "{\id}"] \\
\ZZ^2 \ar[r, "{\id}"] & \ZZ^2. 
\end{tikzcd}
\end{center}
\vspace*{-5mm}
\qedhere
\end{example}

\begin{defn}\label{def:toricStackDatum}
We now define a toric stack datum associated to our semiprojective toric variety $X$ with fan $\Sigma$. 
Continuing with the notation of \Cref{defn:intCoxAndCartierStack}, and let $\Sigma_{\max}$ denote the set of maximal faces of $\Sigma$. 
Consider $\ZZ^{\Sigma(1)}$ with basis $\{e_\rho\mid \rho\in\Sigma(1)\}$ and sublattice 
\begin{equation}\label{eqn:defnL}
\widetilde{L} \ce 
\ZZ^{\Sigma(1)}\text{-lattice saturation of }
\left\<
\sum_{\rho \in \sigma(1)} a_\rho e_\rho \ \in \ZZ^{\Sigma(1)}\Bigg\vert\, \sum a_\rho u_\rho = 0, a_\rho \in \ZZ, \sigma \in \Sigma
\right\>.
\end{equation}
The key point of this definition of $\widetilde{L}$ is that there is one generator for each dependence relation among the rays of each nonsimplicial cone $\sigma$ of $\Sigma$. 

Passing to the quotient by $\widetilde{L}$ introduces relations among the rays $e_\rho$ that mirror the nonsimplicial structure of $\Sigma$.  
Specifically, define $\tN\ce \ZZ^{\Sigma(1)} / \widetilde{L}$ and write $\overline{e_\rho}$ for the image of $e_\rho$ in $\tN$ and $\widetilde{\rho}$ for the ray spanned by $\overline{e_\rho}$. 
Consider the map
\begin{align*}
\beta\colon \tN = \ZZ^{\Sigma(1)} / \widetilde{L} &\to N
\\ 
\overline{e_\rho} &\mapsto u_\rho,
\end{align*}
where $u_\rho$ is the primitive lattice point on the ray $\rho \in N$. 
Finally, the fan $\tSigma$ on $\tN$ is obtained as follows: for each cone $\sigma \in \Sigma$, include the cone $\widetilde{\sigma}$  spanned by the rays $\widetilde{\rho}$ for $\rho\in \sigma(1)$.  

Modding out $\ZZ^{\Sigma(1)}$ by $\widetilde{L}$ ensures that the above definition of $\tSigma$ is in fact a fan; and by construction, its cones are in natural bijection with the cones of $\Sigma$.  
This provides \defi{the toric stack datum 
$(\tN, N, \tSigma, \beta)$ associated to $X$}. 
\end{defn}

\begin{example}
\label{ex:canonicalSimpicial}
If $X$ is a simplicial toric variety, then $\widetilde{L}=0$.  
Thus $\tN = \ZZ^{\Sigma(1)}$ and $\beta(e_\rho) =u_\rho$, so that the $\QQ$-Cartier stack $\sXCar=\sXcan$ recovers the canonical stack associated to $X$ (see~\cite{GS}*{Section~5}).  
In particular, in this case, $\sXcan$ is a smooth toric DM stack.
\end{example}

\begin{example}\label{ex:toricDataNonSimplicial}
As in \Cref{ex:motivatingEx}, let $X$ be the affine toric variety whose fan is a single cone spanned by rays $\rho_1 = (0,0,1), \rho_2 = (1,0,1), \rho_3 = (0,1,1)$, and $\rho_4 = (1,1,1)$ in $\RR^3$. 
This is the nonsimplicial toric variety $X\cong \Spec(\CC[a,b,c,d]/(ad-bc))$.  
In this case, $\widetilde{L} = \< e_1-e_2-e_3+e_4\>$, the quotient $\ZZ^{\Sigma(1)}/\widetilde{L}$ is naturally isomorphic to $N$, and $\tSigma$ is naturally identified with $\Sigma$ under this isomorphism.  
In particular, the $\QQ$-Cartier stack $\sXCar = X$, which is a singular variety (with no stackiness).
\end{example}

The remainder of this section is devoted to proving that the toric stack described by the data in \Cref{def:toricStackDatum}
 is equivalent to $\sX_{\QCart}$.  
 
\begin{prop} \label{prop:canonicalDMstack-toric}
The toric stack associated to the toric stack datum $(\tN, N, \tSigma, \beta)$ in \Cref{def:toricStackDatum} is isomorphic to the $\QQ$-Cartier stack $\sXCar$ of the toric variety $X$.   
\end{prop}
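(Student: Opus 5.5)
The plan is to compute the stack $[X_{\tSigma}/G_\beta]$ through the Cox construction for the toric variety $X_{\tSigma}$, and to show that the resulting graded ring, group and irrelevant locus agree with $\checkS$, $\checkG$ and $V(\checkB)$.

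First I would note that the fan $\tSigma$ is simplicial. For $\sigma\in\Sigma$, the rays $\overline{e_\rho}$ with $\rho\in\sigma(1)$ become linearly independent in $\tN_\RR$ modulo the relations of $\tL$ only in a controlled way. More useful is the following description of the dual: $\tN^\vee=\Hom(\ZZ^{\Sigma(1)}/\tL,\ZZ)$ is the set of $D=\sum a_\rho D_\rho\in\ZZ^{\Sigma(1)}$ such that, for every cone $\sigma$, the function $u_\rho\mapsto a_\rho$ on $\sigma(1)$ respects all linear relations among the $u_\rho$. This holds exactly when $D|_{U_\sigma}$ is given by a linear functional in $M_\QQ$, i.e.\ when $D$ is $\QQ$-Cartier on every $U_\sigma$; the lattice saturation in $\tL$ is what makes this an integral condition over $\QQ$. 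So $\tN^\vee=\{D\in\ZZ^{\Sigma(1)}: D \text{ is }\QQ\text{-Cartier}\}$.

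Next I would dualize the sequence $\tN\xrightarrow{\beta}N$ to $0\to M\to\tN^\vee\to\coker\to 0$ (injective since $\beta$ has finite cokernel). The image of $\tN^\vee$ in $\Cl(X)$ is then exactly $\QCart(X)$, by the description above. The character group of $G_\beta=\ker\beta\otimes\CC^*$ is $\tN^\vee/\beta^\vee M$ modulo torsion issues. To handle torsion properly, I would use the Geraschenko--Satriano description: the character group of $G_\beta$ is $\mathrm{Ext}$-corrected, i.e.\ it equals $\coker(\beta^\vee)\cong\QCart(X)$, so $G_\beta\cong\checkG$.

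The main step is to identify $X_{\tSigma}$ and its $G_\beta$-action. I would write $X_{\tSigma}$ as covered by affine charts $U_{\tsigma}=\Spec\CC[\tsigma^\vee\cap\tN^\vee]$. Then I would show that $\CC[\tsigma^\vee\cap \tN^\vee]$ is the ring $\checkS[\check{x}_{\sigma^c}^{-1}]$ from \Cref{lem:checkxsigma}, with the grading by $\QCart(X)$ inducing the $G_\beta$-action. Concretely, a monomial $\prod x_\rho^{a_\rho}$ in $S[x_{\sigma^c}^{-1}]$ has $\QQ$-Cartier degree exactly when $(a_\rho)\in\tN^\vee$. Regularity on $U_{\tsigma}$ means $a_\rho\ge0$ for $\rho\in\sigma(1)$, which matches pairing nonnegatively with $\tsigma$.

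Gluing these charts identifies $X_{\tSigma}$ with $\Spec\checkS-V(\checkB)$: the charts $D(\check x_{\sigma^c})$ cover the complement of $V(\checkB)$, because $\checkB$ is radically generated by the $\check x_{\sigma^c}$. This last claim follows from $B=\langle x_{\sigma^c}\rangle$ together with \Cref{lem:checkxsigma}. These identifications are compatible with the tori, so the quotient stacks agree.

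I expect the obstacle to be the chart identification $\CC[\tsigma^\vee\cap\tN^\vee]\cong\checkS[\check x_{\sigma^c}^{-1}]$. In particular one must check that the saturation in $\tL$ gives exactly the $\QQ$-Cartier lattice and not a finite-index sublattice, and that the torsion of $\coker\beta^\vee$ is correctly encoded by the finite part of $G_\beta$. I would check this first on \Cref{ex:toricDataNonSimplicial} and on the simplicial case of \Cref{ex:canonicalSimpicial}.
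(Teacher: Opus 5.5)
Your proposal is correct in its main line, but it argues differently from the paper. The paper works globally. \Cref{lem:affineComparison} identifies $\tN^\vee$ with $\CDiv^\QQ_{T_N}(X)$ through $\QQ$-valued support functions, which makes $\Spec\checkS$ the affine toric variety of the single cone spanned by all the $\overline{e_\rho}$. Then \Cref{lem:faceBijection} and \Cref{lem:quasiaffineComparison} check that removing $V(\checkB)$ removes exactly the faces whose index sets contain a primitive collection, leaving $X_{\tSigma}$.

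You get the same description $\tN^\vee=\tL^\perp=\CDiv^\QQ_{T_N}(X)$ directly from the relations, but then you work chart by chart. You show $D(\check{x}_{\sigma^c})=\Spec\checkS[\check{x}_{\sigma^c}^{-1}]$ with $\checkS[\check{x}_{\sigma^c}^{-1}]=\CC[\tsigma^\vee\cap\tN^\vee]$, reusing \Cref{lem:checkxsigma}, and that these charts cover $\Spec\checkS-V(\checkB)$. This is more elementary: it avoids the face and primitive-collection bookkeeping and never has to describe $\Spec\checkS$ as a whole. The paper's version instead exhibits $X_{\tSigma}$ as a toric open subset of one affine toric variety, which is the form reused in \Cref{thm:quotientRealization}.

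Your handling of the group is also the right one. With $G_\beta=\ker(T_{\tN}\to T_N)$ as in Geraschenko--Satriano, the character group is $\coker\beta^\vee\cong\QCart(X)$, torsion included. The connected group $\ker\beta\otimes\CC^*$ is only its identity component.

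There is one correction and there are two details to write out.

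\emph{Correction.} Delete the opening claim that $\tSigma$ is simplicial. Since $\beta_\RR$ maps each $\tsigma$ isomorphically onto $\sigma$, $\tSigma$ is simplicial exactly when $\Sigma$ is (compare \Cref{ex:toricDataNonSimplicial} and part (3) of \Cref{thm:coarseModuli}). Fortunately nothing later uses it. Relatedly, your worry about saturation is moot: $\tL^\perp$ does not change under saturation, and saturating only makes $\tN$ torsion-free.

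\emph{Chart identification.} Let $c$ be the exponent vector of $\check{x}_{\sigma^c}$, and let $b\in\tN^\vee$ with $b_\rho\ge 0$ on $\sigma(1)$. Then $x^b=x^{b+kc}/\check{x}_{\sigma^c}^{k}$, and $x^{b+kc}\in\checkS$ for $k\gg 0$. This gives $\CC[\tsigma^\vee\cap\tN^\vee]\subseteq\checkS[\check{x}_{\sigma^c}^{-1}]$.

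\emph{Covering.} $\checkB$ is spanned by monomials $x^a\in\checkS$ such that, for some $\sigma$, $a_\rho\ge 1$ for every $\rho\notin\sigma(1)$. For such $x^a$, $x^{Na}\in\check{x}_{\sigma^c}\checkS$ once $N\ge\max_\rho c_\rho$. This is what gives $\sqrt{\checkB}=\sqrt{\langle\check{x}_{\sigma^c}\rangle}$.

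The charts then glue compatibly, since all rings involved sit inside $\CC[\tN^\vee]$ and $\tsigma\cap\tsigma'=\widetilde{\sigma\cap\sigma'}$.
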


Let $\CDiv^\QQ_{T_N}(X) \subseteq \ZZ^{\Sigma(1)}$ denote the subgroup of $T$-equivariant Weil divisors on $X$ that are $\QQ$-Cartier. Observe that the $\QQ$-Cartier Cox ring $\checkS$ is the monoid ring given by
\[
\checkS = \CC[\CDiv^\QQ_{T_N}(X) \cap \NN^{\Sigma(1)}], 
\] 
which happens to be the ring for the affine toric variety associated to the dual cone 
\begin{align}\label{eq:sigmaCheck}
\sigma_{\checkS} \ce \RR_{\geq 0}(\CDiv^\QQ_{T_N}(X) \cap \NN^{\Sigma(1)})^\vee \subseteq \RR^{\Sigma(1)}. 
\end{align}
Let $\SF(\Sigma, N)$ denote the lattice of support functions on $\Sigma$ from \cite{CLSToricVarieties}*{Definition 4.2.11}. 

\begin{lemma} \label{lem:affineComparison}
The lattice $\widetilde N$ in \Cref{def:toricStackDatum} is dual to the lattice $\CDiv^\QQ_{T_N}(X)$.  As a consequence, the cone in \eqref{eq:sigmaCheck} satisfies $\sigma_{\checkS} = \RR_{\geq 0}(\overline{e_\rho}) = |\widetilde \Sigma|$ and $\wcheckG = \QCart(X)$.  
Furthermore, $\CDiv_{T_N}(X)$ is dual to 
\footnotesize
\[
\left\<
\sum a_\rho e_\rho \ \in \ZZ^{\Sigma(1)}\Bigg\vert\, \sum a_\rho\varphi(u_\rho) \in \ZZ, \forall \varphi \in \SF(\Sigma, N) 
\right\> \Bigg/ \left\<
\sum_{\rho \in \sigma(1)} a_\rho e_\rho \ \in \ZZ^{\Sigma(1)}\Bigg\vert\, \sum a_\rho u_\rho = 0, \sigma \in \Sigma
\right\>.
\]
\normalsize
\end{lemma}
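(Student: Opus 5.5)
The plan is to identify $\CDiv^\QQ_{T_N}(X)$ explicitly inside $\ZZ^{\Sigma(1)}$ and then dualize.

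First, a torus-invariant divisor $D=\sum b_\rho D_\rho$ is $\QQ$-Cartier if and only if, for every cone $\sigma\in\Sigma$, there is some $m_\sigma\in M_\QQ$ with $\langle m_\sigma,u_\rho\rangle=-b_\rho$ for all $\rho\in\sigma(1)$. By linear algebra, a vector $(b_\rho)_{\rho\in\sigma(1)}$ lies in the image of $M_\QQ\to\QQ^{\sigma(1)}$ exactly when it is orthogonal to every integer relation $\sum_{\rho\in\sigma(1)}a_\rho u_\rho=0$. Hence
\[
\CDiv^\QQ_{T_N}(X)=\{\,b\in\ZZ^{\Sigma(1)} : \langle b,\ell\rangle=0 \text{ for all } \ell \text{ among the generators in \eqref{eqn:defnL}}\,\}=\widetilde L^{\perp}\cap(\ZZ^{\Sigma(1)})^\vee .
\]
Because $\widetilde L$ is saturated, $\ZZ^{\Sigma(1)}/\widetilde L=\tN$ is free. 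The standard fact that, for a saturated sublattice $L\subseteq \ZZ^n$, $L^\perp$ is the dual of $\ZZ^n/L$ then gives $\tN^\vee=\CDiv^\QQ_{T_N}(X)$.

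For the consequences, dualize cones. The cone generated by $\CDiv^\QQ_{T_N}(X)\cap\NN^{\Sigma(1)}$ is $\widetilde L^\perp_\RR\cap\RR^{\Sigma(1)}_{\geq 0}$, since rational cones have rational generators. Its dual in $\tN_\RR=\RR^{\Sigma(1)}/\widetilde L_\RR$ is the image of $\RR^{\Sigma(1)}_{\geq0}$, namely $\RR_{\geq0}\{\overline{e_\rho}\}$. This equals $|\tSigma|$ because, for semiprojective $X$, a maximal cone's rays need not include all rays... so here one must be careful. I would interpret $|\tSigma|$ via the convex support of $\tSigma$, check in the affine case that it is the single cone, and otherwise read $\sigma_{\checkS}$ as the cone over all $\overline{e_\rho}$ and note that $\tSigma$ is a subfan of its faces. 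This identification is the step where I expect subtlety.

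For $\wcheckG=\QCart(X)$, I would use $\QCart(X)=\CDiv^\QQ_{T_N}(X)/M$ (with $M$ embedded as principal divisors, which are Cartier) together with $\checkG=\Hom(\QCart(X),\CC^*)$. Equivalently, $\checkG=\ker(T_{\tN}\to T_N)$, whose character group is $\coker(M\to\tN^\vee)$.

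For the final claim, recall that $D$ is Cartier if and only if each $m_\sigma$ can be taken in $M$. Equivalently, the support function $\varphi_D$ is integral, lying in $\SF(\Sigma,N)$, and $D=\sum-\varphi(u_\rho)D_\rho$. So $\CDiv_{T_N}(X)\cong\SF(\Sigma,N)$ inside $\widetilde L^\perp$. Its dual is the lattice of $a\in\QQ$-combinations pairing integrally with all such functions, modulo the common kernel. I would restrict to $a\in\ZZ^{\Sigma(1)}$ as written and compute the pairing kernel to be exactly the stated relation lattice: support functions are linear on each cone, so they kill each relation, and the relations span the annihilator by the first paragraph.

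The remaining point is integrality and saturation of the displayed quotient. I would handle it by the same saturated-sublattice duality, applied to the lattice $\{a:\sum a_\rho\varphi(u_\rho)\in\ZZ\ \forall \varphi\}$, which is the dual of the image lattice of $\SF$.
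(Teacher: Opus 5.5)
Your core argument is correct and is essentially the paper's proof: you identify $\CDiv^\QQ_{T_N}(X)$ with $\tL^{\perp}\cap\ZZ^{\Sigma(1)}$ and then dualize using that $\tL$ is saturated. The paper reaches $\tL^\perp$ through $\CDiv_{T_N}(X)\cong\SF(\Sigma,N)$, its saturation $\SF^{\QQ}(\Sigma,N)$, and an explicit piecewise-linear $\varphi_{\mathbf{a}}$ built from $\mathbf{a}\in\tL^\perp$, which needs a well-definedness check across cones. You instead use the local data $m_\sigma\in M_\QQ$ and ``image of the transpose $=$ annihilator of the kernel'' cone by cone. This is the same fact, and it spares you that check.

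Your caution about $|\tSigma|$ is warranted. The union of the cones of $\tSigma$ is generally not convex (for $\PP^2$ it is the boundary of $\RR^3_{\geq 0}$). So the equality only holds if $|\tSigma|$ is read as the cone on all $\overline{e_\rho}$, which is how the paper uses it in \Cref{thm:quotientRealization}. Your $\wcheckG$ argument is also right. Note that it relies on the kernel of $T_{\tN}\to T_N$, as you wrote; the torus $\ker\beta\otimes_\ZZ\CC^*$ only sees $\QCart(X)$ modulo torsion when the $u_\rho$ fail to generate $N$.

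The step that fails is in the last claim, where you propose to ``restrict to $a\in\ZZ^{\Sigma(1)}$ as written.''
\begin{itemize}
\item Every $\varphi\in\SF(\Sigma,N)$ is integral on $N$, so $\varphi(u_\rho)\in\ZZ$. Hence the condition $\sum a_\rho\varphi(u_\rho)\in\ZZ$ is vacuous for integer $a$.
\item The displayed quotient is then just $\ZZ^{\Sigma(1)}$ modulo the integer relations. By your own first paragraph, its dual is $\tL^\perp\cap\ZZ^{\Sigma(1)}=\CDiv^\QQ_{T_N}(X)$, not $\CDiv_{T_N}(X)$.
\item For $\PP_v(1,1,2)$ this gives $\ZZ^3$, whereas $\CDiv_{T_N}(X)=\{(b_1,b_2,b_3)\in\ZZ^3 : b_1+b_2\ \text{even}\}$ has index $2$ (cf.\ \Cref{ex:P112-SF}).
\end{itemize}
Put differently, the pairing $\tN\times\SF(\Sigma,N)\to\ZZ$ is nondegenerate but not perfect: $\tN$ sits inside $\Hom(\SF(\Sigma,N),\ZZ)$ with nontrivial finite cokernel in general. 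No saturation argument on integer vectors can repair this.

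The version you stated first is the right one: take $a\in\QQ^{\Sigma(1)}$ in both the numerator and the relation lattice. The paper does exactly this for $\tN'$ in the proof of \Cref{thm:Pic-iso}; its proof of this lemma ends with the same integer-coefficient slip. With rational coefficients, the map $a\mapsto\langle a,-\rangle$ is onto $\Hom(\SF(\Sigma,N),\ZZ)$, because homomorphisms extend $\QQ$-linearly. Its kernel is $(\SF(\Sigma,N)_\QQ)^{\perp}=(\tL^{\perp}_\QQ)^{\perp}=\tL_\QQ$, which is the $\QQ$-span of the relations. This proves the claim directly.
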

\begin{proof}
First, note that $\CDiv_{T_N}(X)\cong\SF(\Sigma,N)$ by \cite{CLSToricVarieties}*{Theorem 4.2.12(c)}.  This identifies the saturation $\CDiv^{\QQ}_{T_N}(X)$ of $\CDiv_{T_N}(X)$ with the lattice of support functions $\SF^{\QQ}(\Sigma,N)$, which are only required to be integral on the rays of $\Sigma$. 
Observe that the map
\begin{align*}
\SF^{\QQ}(\Sigma, N) & \hookrightarrow \ZZ^{\Sigma(1)} \\
\varphi & \mapsto \sum \varphi(u_\rho)e_\rho \notag
\end{align*}
is injective, since piecewise linearity guarantees both that a support function is determined by its values on the $u_\rho$ and that $\sum\varphi(u_\rho)e_\rho$ pairs to zero with each vector of $\tL$. 

Now $\tL$ is a saturated sublattice of $\ZZ^{\Sigma(1)}$, so it is part of the split short exact sequence 
$
0\to\tL\to\ZZ^{\Sigma(1)}\to\tN\to 0. 
$ 
Dualizing, there is a containment 
\begin{align}\label{eq:SFQ}
\SF^\QQ(\Sigma,N) \subseteq 
\left\{\bba \in\ZZ^{\Sigma(1)}
\,\,\Big\vert\,\, \bba\cdot\bbv = 0\text{ for all }\bbv\in\tL\right\} = \tN^\vee.
\end{align}

Next, given $\bba =\sum a_\rho e_\rho \in\ZZ^{\Sigma(1)}$ with $\bba\cdot\bbv$ for all $\bbv\in\tL$, define an associated $\varphi_\bba\in\SF^\QQ(\Sigma,N)$ as follows. Given a rational vector $\bbp\in|\Sigma|$, let $\sigma_\bbp$ be the smallest cone in $\Sigma$ that contains $\bbp$ and write $\bbp = \sum_{\rho\in{\sigma_p}(1)}p_\rho u_\rho$. 
Then define $\varphi_\bba(\bbp) \ce \sum_{\rho\in\sigma_p(1)} a_\rho p_\rho$. 
Note that $\varphi_\bba$ is well-defined: 
if there are two faces $\sigma, \tau\in\Sigma$ with 
$\bbp = \sum_{\rho\in{\sigma}(1)}p_\rho u_\rho=\sum_{\rho\in{\tau}(1)}q_\rho u_\rho$, 
then, clearing the denominator, there is a $c\in\ZZ$ so that $c \cdot\sum (p_\rho-q_\rho) u_\rho\in\tL$. Now since $\bba\cdot\bbv$ for all $\bbv\in\tL$,  $\sum a_\rho p_\rho = \sum a_\rho q_\rho$, so $\varphi_\bba$ is well-defined.  
Therefore, it follows that the containment in \eqref{eq:SFQ} is an equality. 

We have now shown that there is a perfect pairing $\SF^{\QQ}(\Sigma, N) \otimes \widetilde N \to \ZZ$ that identifies the dual of the full sublattice $\SF(\Sigma, N) \subseteq \SF^{\QQ}(\Sigma, N)$ with
\footnotesize
\[
\left\<
\sum a_\rho e_\rho \ \in \ZZ^{\Sigma(1)}\Bigg\vert\, \sum a_\rho\varphi(u_\rho) \in \ZZ, \forall \varphi \in \SF(\Sigma, N) 
\right\> \Bigg/ \left\<
\sum_{\rho \in \sigma(1)} a_\rho e_\rho \ \in \ZZ^{\Sigma(1)}\Bigg\vert\, \sum a_\rho u_\rho = 0, \sigma \in \Sigma
\right\>,
\]
\normalsize
as desired.
\end{proof}

\begin{example}\label{ex:P112-SF}
Continuing \Cref{ex:P112-morph}, consider the weighted projective stack $\sXCar = \PP(1, 1, 2)$, where $\SF(\Sigma, N) = \{ (a,b,c) \in \ZZ^3 \ | \ a+b \equiv 0 \,\,(\mathrm{mod} \ 2) \}$,  while $\SF^{\QQ}(\Sigma, N) = \ZZ^3$. 
\end{example}

\begin{lemma} 
\label{lem:faceBijection}
For the cone $\sigma_{\checkS}$ from \eqref{eq:sigmaCheck}, there is a bijection 
\begin{align*}
\left\{\text{faces }\tau\text{ of } \sigma_{\checkS} \right\}
&\leftrightsquigarrow 
\left\{ 
\begin{matrix}\text{indexing sets } I \subseteq \Sigma(1) \text{ such that there exists a}\\ \QQ\text{-Cartier divisor }D = \sum_{\rho \in I} a_\rho D_\rho\text{ with }a_\rho \in \ZZ_{>0} 
\end{matrix}
\right\}
\\
\RR^{I^c}_{\geq 0} \cap \tau
&\ \mapsto \ \ 
I.
\end{align*}
\end{lemma}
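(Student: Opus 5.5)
The bijection is the standard duality between the faces of a rational polyhedral cone and the faces of its dual cone. Write $\Lambda \ce \CDiv^\QQ_{T_N}(X)\subseteq \ZZ^{\Sigma(1)}$ and $C \ce \RR_{\geq 0}(\Lambda\cap\NN^{\Sigma(1)}) = \Lambda_\RR\cap \RR^{\Sigma(1)}_{\geq 0}$, so that $\sigma_{\checkS}=C^\vee$. The plan is as follows.

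First, describe the faces of $C$. Since $C$ is the intersection of the subspace $\Lambda_\RR$ with the positive orthant, each face of $C$ is cut out by setting a subset of the coordinates to zero. Concretely, each face of $C$ equals $C\cap \RR^{I}_{\geq 0}$ for a unique \emph{admissible} $I$, meaning $I$ is exactly the support of some point in the relative interior of that face. Because $\Lambda$ is a lattice and $C$ is rational, we may take that point to be a lattice point $D=\sum_{\rho\in I}a_\rho D_\rho$ with all $a_\rho\in\ZZ_{>0}$. This is precisely the condition on the right-hand side of the statement. Conversely, given such a $D$, the set $F_I \ce C\cap\RR^I_{\geq 0}$ is the face of $C$ cut out by the supporting functional $\sum_{\rho\notin I}e_\rho^*$, which is nonnegative on $C$. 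The relative interior of $F_I$ contains $D$, so $I$ is recovered as the maximal support of points of $F_I$. This gives a bijection between faces of $C$ and admissible index sets $I$.

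Second, compose with face duality $F\mapsto F^*\ce C^\vee\cap F^\perp$, which is an inclusion-reversing bijection between faces of $C$ and faces of $\sigma_{\checkS}=C^\vee$. It remains to identify $F_I^*$ with $\RR^{I^c}_{\geq 0}\cap\tau$, i.e.\ to show that the face $\tau$ of $\sigma_{\checkS}$ corresponds to $I$ exactly when $\tau$ is dual to $F_I$. Since $F_I$ contains a point $D$ with every coordinate in $I$ positive, the perp $F_I^\perp$ is the annihilator of $D$. For $\tau\in C^\vee$, the pairing $\langle\tau,D\rangle$ is $0$ with $D\in C$. To convert this into a coordinate statement, I would use the description $\sigma_{\checkS}=\RR_{\geq 0}(\overline{e_\rho})$ from \Cref{lem:affineComparison}. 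That description shows $C^\vee$ is the image of the orthant $\RR^{\Sigma(1)}_{\geq 0}$ in $\tN_\RR=\RR^{\Sigma(1)}/\tL_\RR$. In these terms, $F_I^*$ is generated by the $\overline{e_\rho}$ with $\rho\notin I$, because $\langle\overline{e_\rho},D\rangle=a_\rho>0$ for $\rho\in I$ and $=0$ otherwise. The map in the statement, $\tau\mapsto I$, is then read as: $I$ is the complement of the set of $\rho$ for which the $\overline{e_\rho}$ lie in $\tau$. Equivalently, $\tau=\mathrm{cone}(\overline{e_\rho}:\rho\in I^c)$, which matches $\RR^{I^c}_{\geq0}\cap\tau$ after lifting to the orthant.

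The main obstacle is this last identification. We must check that every face $\tau$ of $C^\vee$ is spanned by the $\overline{e_\rho}$ it contains. That part is easy, since every face of a cone generated by the $\overline{e_\rho}$ is generated by a subset of them. We must also check that the resulting complement $I$ is admissible, and that distinct admissible $I$ give distinct faces. Both follow by running the duality above in both directions. The one subtle point is that two different $I$ might a priori generate the same face of $C^\vee$ if some $\overline{e_\rho}$ lies in a face without being "accounted for". Choosing $I$ as the maximal support of a relative-interior lattice point of $F$ avoids this: with that choice, $\overline{e_\rho}\in F^*$ if and only if $\rho\notin I$, so the correspondence is bijective.
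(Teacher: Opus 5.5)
Your proposal is correct and follows the paper's route: the faces of the dual cone $\sigma_{\checkS}^\vee=\CDiv^\QQ_{T_N}(X)_\RR\cap\RR^{\Sigma(1)}_{\geq 0}$ are the coordinate faces $\RR^I_{\geq 0}\cap\CDiv^\QQ_{T_N}(X)_\RR$, indexed by supports of positive $\QQ$-Cartier divisors, and face duality transports this indexing to $\sigma_{\checkS}$; your explicit identification of the dual face as the cone on the $\overline{e_\rho}$ with $\rho\notin I$ (via \Cref{lem:affineComparison}) spells out what the paper leaves implicit. One small wording fix: $F_I^\perp$ is not the annihilator of $D$ in general, but $C^\vee\cap F_I^\perp=C^\vee\cap D^\perp$ holds because $D$ lies in the relative interior of $F_I$, and that is all your argument uses.
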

\begin{proof}
By definition, the faces of $\tau^\vee$ are given by $\RR^I_{\geq 0} \cap \CDiv^\QQ_{T_N}(X)_{\RR}$.  Hence, these are manifestly indexed by such subsets $I$.  The written bijection comes from the bijection between faces of $\tau$ and complimentary faces of $\tau^\vee$.
\end{proof}

\begin{lemma} \label{lem:quasiaffineComparison}
The toric variety associated to $\widetilde \Sigma$ from \Cref{def:toricStackDatum} is $\Spec(\checkS) - V(\checkB)$.   
\end{lemma}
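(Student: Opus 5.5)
The plan is to compare both sides cone by cone, using the affine description of $\Spec(\checkS)$ from \Cref{lem:affineComparison}. That lemma identifies $\Spec(\checkS)$ with the affine toric variety $U_{\sigma_{\checkS}}$ for the lattice $\tN$, whose cone is $\sigma_{\checkS}=\RR_{\geq 0}(\overline{e_\rho}\mid \rho\in\Sigma(1))$. Each cone $\tsigma\in\tSigma$ is generated by a subset of these rays, so $\tSigma$ is a subfan of the face fan of $\sigma_{\checkS}$, provided each $\tsigma$ is actually a face of $\sigma_{\checkS}$. Granting that, $X_{\tSigma}$ is the open subvariety $\bigcup_{\sigma\in\Sigma_{\max}} U_{\tsigma}\subseteq \Spec(\checkS)$. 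It then suffices to show
\[
\bigcup_{\sigma\in\Sigma_{\max}} U_{\tsigma} \;=\; \Spec(\checkS)-V(\checkB).
\]

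The first step is that $\tsigma$ is a face of $\sigma_{\checkS}$ for each $\sigma\in\Sigma_{\max}$. Using \Cref{lem:faceBijection}, I will exhibit a $\QQ$-Cartier divisor $D=\sum_{\rho\notin\sigma(1)}a_\rho D_\rho$ with all $a_\rho>0$. The monomial $\check{x}_{\sigma^c}$ of \Cref{lem:checkxsigma} provides exactly this: its degree is ample, hence $\QQ$-Cartier, and its exponent vector is a $T$-equivariant $\QQ$-Cartier divisor supported precisely on $\sigma^c$. The corresponding face is $\RR^{\sigma(1)}_{\geq 0}\cap\sigma_{\checkS}$, and I must check that this face is the cone generated by the $\overline{e_\rho}$ with $\rho\in\sigma(1)$, i.e.\ equals $\tsigma$. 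Since $\check{x}_{\sigma^c}\in\checkS$, the face is cut out by the hyperplane dual to $\check{x}_{\sigma^c}$. The rays $\overline{e_\rho}$ lying on it are exactly those with $\rho\in\sigma(1)$, so the face is $\tsigma$. Passing to faces, every cone of $\tSigma$ is a face of $\sigma_{\checkS}$.

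The second step identifies each chart: $U_{\tsigma}$ is the localization $\Spec\checkS[\check{x}_{\sigma^c}^{-1}]$. This is the standard fact that the open affine of a face cut out by a monomial $\chi^m$ is the principal open set $D(\chi^m)$. Hence $X_{\tSigma}=\bigcup_\sigma D(\check{x}_{\sigma^c})$.

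The third step, which I expect to be the main obstacle, shows that the radical of the ideal generated by the $\check{x}_{\sigma^c}$ equals $\sqrt{\checkB}$. One containment holds because each $\check{x}_{\sigma^c}$ has ample degree and so lies in $\checkB$ by \Cref{defn:intCoxAndCartierStack}. For the converse, take $f\in\checkB$ homogeneous of ample degree. Then $f\in B=\langle x_{\sigma^c}\rangle$ in $S$, so every monomial of $f$ is divisible by some $x_{\sigma^c}$. A suitable power $f^N$ then has each monomial divisible by some $\check{x}_{\sigma^c}$: take $N\geq\max a_\rho$ and expand, so that each monomial of $f^N$ contains a product of $N$ monomials, each divisible by some $x_{\sigma^c}$. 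The delicate point is that the quotient must lie in $\checkS$, i.e.\ have $\QQ$-Cartier degree. This holds because both $f^N$ and $\check{x}_{\sigma^c}$ have $\QQ$-Cartier degree, and the cofactor monomial is divisible in $S$ and so lies in $\NN^{\Sigma(1)}$. A slightly cleaner way to organize this is to argue set-theoretically: a point of $\Spec(\checkS)$ outside every $D(\check{x}_{\sigma^c})$ kills all of $\checkB$. This follows from the face description, since such a point lies in an orbit of a face of $\sigma_{\checkS}$ not contained in any $\tsigma$, and by \Cref{lem:faceBijection} that face corresponds to a support with no positive $\QQ$-Cartier divisor of ample degree. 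Either way, $V(\checkB)$ equals the complement of $\bigcup D(\check{x}_{\sigma^c})$, which gives the lemma.
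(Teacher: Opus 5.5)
Your proposal is correct up to one easily repaired step, and it takes a different route from the paper.

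\textbf{Comparison with the paper.} The paper argues face by face. By \Cref{lem:faceBijection}, the faces of $\sigma_{\checkS}$ are indexed by ray sets whose complements support a positive $\QQ$-Cartier divisor. It then observes that the faces removed by $V(\checkB)$ are exactly those whose ray set contains a primitive collection, i.e.\ is not a cone of $\Sigma$. You argue chart by chart instead, in the spirit of Cox's original proof and of \Cref{sec:varProof}. The monomial $\check{x}_{\sigma^c}$ of \Cref{lem:checkxsigma} exhibits $\tsigma$ as a face of $\sigma_{\checkS}$ and identifies $U_{\tsigma}$ with $\Spec\checkS[\check{x}_{\sigma^c}^{-1}]$. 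The lemma then reduces to $\sqrt{\checkB}=\sqrt{\langle \check{x}_{\sigma^c}\mid \sigma\in\Sigma_{\max}\rangle}$.

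What your route buys is an explicit check of a point the paper passes over: that every cone of $\tSigma$ really is a face of $\sigma_{\checkS}$. This needs a positive $\QQ$-Cartier divisor supported on $\sigma(1)^c$, which is exactly what \Cref{lem:checkxsigma} supplies. Your route also ties the charts of $X_{\tSigma}$ directly to the localizations of \Cref{sec:varProof}. The paper's route is shorter and describes the whole face poset at once.

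Your ``set-theoretic'' alternative is essentially the paper's argument, but the reason you give for it is muddled. The clean reason is this: if the ray set $T$ of a face $\tau$ lies in no $\sigma(1)$, then every $x_{\sigma^c}$ involves some $x_\rho$ with $\rho\in T$. Hence every monomial of $\checkB$ does too, and so vanishes on the orbit of $\tau$.

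\textbf{The step to repair.} In the radical comparison, $N\geq \max a_\rho$ is not enough for a general $f$. Different monomial factors of $f^N$ can be divisible by different $x_{\sigma^c}$, so you would need a pigeonhole bound such as $N\geq |\Sigma_{\max}|\cdot\max a_\rho$.

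It is cleaner to note that $\checkB=B\cap\checkS$ is spanned by monomials, since $B$ is a monomial ideal and $\checkS$ is spanned by the monomials of $\QQ$-Cartier degree. So it suffices to treat a single monomial $\mu\in\checkB$. If $x_{\sigma^c}\mid\mu$, then $\check{x}_{\sigma^c}\mid\mu^{A}$, where $A$ is the largest exponent in $\check{x}_{\sigma^c}$. The cofactor has nonnegative exponents and degree $A\deg\mu-\deg\check{x}_{\sigma^c}\in\QCart(X)$, so it lies in $\checkS$.

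Also, you never use ampleness of $\deg f$, and $B\cap\checkS$ need not be concentrated in ample degrees. So do not restrict to such $f$: both containments follow directly from $\checkB=B\cap\checkS$.
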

\begin{proof}
By \Cref{lem:affineComparison}, $\Spec(\checkS)$ is the affine toric variety $X_{\sigma_{\checkS}}$.
The fan associated to the quasi-affine toric variety $\Spec(\checkS) - V(\checkB)$ is a subfan of faces of $\sigma_{\checkS}$.  These faces are in bijection with indexing sets $I \subseteq \Sigma(1)$ such that there exists a $\QQ$-Cartier divisor $D = \sum_{\rho \in I} a_\rho D_\rho$ with $a_\rho \in \ZZ_{>0}$ by Lemma~\ref{lem:faceBijection}.  By definition of $\checkB$, the removed faces correspond to indexing sets $I$ which contain a primitive collection of $\Sigma$.  Since primitive collections exactly index the minimal sets of rays which are not cones, these are precisely the cones you do not have in  $\widetilde \Sigma$, as desired.
\end{proof}

\begin{proof}[Proof of \Cref{prop:canonicalDMstack-toric}]
This follows immediately from Lemmas~\ref{lem:affineComparison}~and~\ref{lem:quasiaffineComparison}.  
Namely, the $\QQ$-Cartier stack of $X$ by definition is $[\Spec(\checkS) - V(\checkB) / \checkG]$ which is the same thing as the toric stack associated to $(\widetilde N, N, \widetilde \Sigma, \beta)$ by these lemmas.  
\end{proof}

\section{The coarse moduli and universal properties for the \texorpdfstring{$\QQ$}{Q}-Cartier stack}
\label{sec:coarseModuli}

We now apply the construction of \Cref{sec:construction} to prove \Cref{thm:coarseModuli} and a subsequent stacky proof of \Cref{thm:nullstellensatz-var}. 
To compare $X$ and $\sXCar$, we rely on Kuwagaki's definition of a combinatorial equivalence \cite{Kuw20}.
\begin{defn} \label{def:combIso}
For arbitrary lattices $\tN, N$, a map $\beta\colon \tN \to N$ is a \defi{combinatorial equivalence of fans} $\tSigma$ and $\Sigma$ if the following two properties hold: 
\begin{enumerate}[noitemsep]
    \item The restricted map $\beta_{\RR}|_{\tsigma}\colon \tsigma \xrightarrow{\sim} \sigma$ is an isomorphism of cones for all $\tsigma\in \tSigma$.
    \item The map $\beta_{\RR}$ induces an isomorphism between the posets of cones given by
    \begin{align*}
    \vspace*{-3mm}
        \tSigma & \simeq \Sigma \\
        \tsigma & \mapsto \beta_{\RR}(\tsigma).
    \end{align*}
    \end{enumerate}
\end{defn}

This provides a combinatorial way to show that a toric stack is DM and that a map is a coarse moduli map.  It also clarifies what was ``wrong'' with using the canonical stack in the nonsimplicial case. 

\begin{example}\label{ex:nonCombEquiv}
Returning to Examples~\ref{ex:motivatingEx}~and~\ref{ex:toricDataNonSimplicial}, consider the nonsimplicial affine toric variety $X$ with fan a single cone spanned by $u_1 = (0,0,1), u_2 = (1,0,1), u_3 = (0,1,1)$, and $u_4 = (1,1,1)$ in $\RR^3$ and its faces. 
The associated toric stacky datum for the canonical stack $\sX_{\can}$ is $(\ZZ^4, \ZZ^2, \Sigma_{\can}, \beta_{\can})$, where $\Sigma_{\can}\subseteq \ZZ^4$ consists of a single four-dimensional cone spanned by $e_1, e_2, e_3$, and $e_4$ in $\RR^4$ and its faces. 
The map $\beta_{\can}\colon \RR^4 \to \RR^3$ is not a combinatorial equivalence; there is not a bijection of cones, since 
$\beta_{\mathrm{can}}$ maps the four-dimensional cone and all of the three-dimensional cones of $\Sigma_{\can}$ to the unique top-dimensional cone of $\Sigma$.
\end{example}

\begin{prop} 
\label{prop:combEquiv-implies-coarseModuli}
Let $\widetilde \Sigma$ be a fan in $\widetilde N_{\R}$ and $\Sigma$ be a fan in $N_{\R}$.
If $\beta\colon \widetilde N \to N$ is a combinatorial equivalence, then $[X_{\widetilde \Sigma}/G_\beta]$ is a DM stack and $\beta$ induces a coarse moduli map $[X_{\widetilde \Sigma}/G_\beta] \to X_\Sigma$. 
\end{prop}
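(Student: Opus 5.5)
The plan is to reduce to the affine case and then compute the stabilizers and invariant rings directly. Since $\beta$ induces a bijection of cones $\tSigma\simeq\Sigma$, the open cover $\{U_\sigma\}_{\sigma\in\Sigma_{\max}}$ of $X_\Sigma$ pulls back to the cover $\{U_{\tsigma}\}$ of $X_{\tSigma}$. Concretely, $\beta$ induces a toric morphism $X_{\tSigma}\to X_\Sigma$ with $\beta^{-1}(U_\sigma)=U_{\tsigma}$; here one uses condition (2), which says that no other cone of $\tSigma$ maps into $\sigma$. This morphism is $G_\beta$-invariant, since $G_\beta\subseteq\ker(T_{\tN}\to T_N)$. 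Both the DM property and the coarse moduli property are local on the coarse space, so it suffices to treat a single cone. We may therefore assume that $\tSigma$ is one cone $\tsigma$ (with its faces) and that $\beta_\RR\colon\tsigma\to\sigma$ is an isomorphism of cones.

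\textbf{Step 1 (invariants).} We show $\CC[\tsigma^\vee\cap\tM]^{G_\beta}=\CC[\sigma^\vee\cap M]$. The characters of $G_\beta=\ker\beta\otimes\CC^*$ are $\tM/\beta^*(M)$ modulo torsion issues. More precisely, $\widehat{G_\beta}=\coker(\beta^*\colon M\to\tM)$, and this is correct because $\coker\beta$ is finite, so $\beta^*$ is injective. A monomial $\chi^{\tilde m}$ is invariant if and only if $\tilde m\in\beta^*(M)$. So the invariant ring is spanned by $\chi^{\beta^*m}$ with $\beta^*m\in\tsigma^\vee$. Because $\beta_\RR(\tsigma)=\sigma$, the condition $\beta^*m\ge0$ on $\tsigma$ is equivalent to $m\ge0$ on $\sigma$. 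Hence the invariant ring is $\CC[\sigma^\vee\cap M]$, which shows that $U_{\tsigma}\to U_\sigma$ is a good quotient by a diagonalizable group.

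\textbf{Step 2 (finite stabilizers and DM).} Next we compute the stabilizers of $G_\beta$ on $U_{\tsigma}$. The stabilizer in $T_{\tN}$ of a point in the orbit of a face $\tau\subseteq\tsigma$ is the subtorus $T_{\tN_\tau}$ with $\tN_\tau=\Span(\tau)\cap\tN$. The stabilizer in $G_\beta$ is therefore $G_\beta\cap T_{\tN_\tau}$, whose connected part comes from $\ker\beta\cap\tN_\tau$. By condition (1), $\beta_\RR$ is injective on $\Span_\RR(\tau)$, so $\ker\beta\cap\tN_\tau=0$ and the stabilizer is finite. Since we are in characteristic zero, finite diagonalizable stabilizers are reduced, hence étale, so $[U_{\tsigma}/G_\beta]$ is DM.

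\textbf{Step 3 (orbits and fibers), the main obstacle.} We must show that $G_\beta$-orbits biject with points of $U_\sigma$, equivalently that the good quotient of Step 1 is geometric. Every orbit is closed, because stabilizers have constant dimension $0$, so $\dim$ orbit $=\dim G_\beta$. Good quotients separate closed orbits, so points of $U_\sigma$ correspond to closed orbits, which are all orbits. To double-check, I would verify explicitly that $T_{\tN}$-orbits of $U_{\tsigma}$ map bijectively to $T_N$-orbits of $U_\sigma$ via $\tau\mapsto\beta_\RR(\tau)$, using condition (2). Within a given orbit $O(\tau)$, I would check that $G_\beta$ acts transitively on fibers of $O(\tilde\tau)\to O(\tau)$. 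This amounts to surjectivity of $G_\beta\to\ker\bigl(T_{\tN/\tN_\tau}\to T_{N/N_\tau}\bigr)$, which follows from a snake-lemma computation with $\ker\beta\cap\tN_\tau=0$ and finite cokernels. Finally, the coarse moduli property follows: a good quotient of a quotient stack by a linearly reductive group with finite stabilizers that is geometric is a coarse moduli space, as used in \Cref{cor:coarseModuli}. The $G_\beta$-invariance together with Step 1 then gives the universal property of \Cref{defn:coarseModuli}.
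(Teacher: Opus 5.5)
Your proof is correct, but it takes a different route from the paper's. The paper computes very little itself. It cites Kuwagaki's Lemma~5.4 for the DM property. For the universal property it cites Geraschenko--Satriano's Theorem~6.3 (a combinatorial equivalence is a good moduli space morphism) together with Alper's Theorem~4.16. The only hands-on part is the bijection on $\CC$-points, which it proves via the orbit--cone correspondence and a snake-lemma argument showing that $G_\beta$ surjects onto the kernel of $O(\tsigma)\to O(\sigma)$. That argument is exactly your ``double-check'' in Step~3.

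You instead reduce to a single cone and prove everything directly:
\begin{itemize}
\item the invariant-ring computation gives a good quotient, hence the universal property, since morphisms $[Y/G]\to Z$ to a variety are $G$-invariant morphisms $Y\to Z$;
\item the stabilizer computation gives finite, hence \'etale, stabilizers in characteristic zero, so the stack is DM;
\item equidimensionality of orbits forces every orbit to be closed, and a good quotient separates closed orbits, so the quotient is geometric with no snake lemma needed.
\end{itemize}
Your route is self-contained and elementary. The paper's is shorter and yields the stronger good-moduli-space statement (universality among algebraic spaces, compatibility with base change), at the cost of relying on external theorems.

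One point you should make explicit. Your identification $\widehat{G_\beta}=\coker(\beta^*)$ is the character group of $\ker(T_{\tN}\to T_N)$, the Geraschenko--Satriano group. That group agrees with $\ker\beta\otimes_\ZZ\CC^*$ only when $\beta$ is surjective. Under the literal reading $G_\beta=\ker\beta\otimes_\ZZ\CC^*$ the statement fails. For example, take $\beta=2\colon\ZZ\to\ZZ$ with $\tSigma$ and $\Sigma$ both the fan of $\AA^1$: then $G_\beta$ is trivial and the induced map is $\AA^1\to\AA^1$, $t\mapsto t^2$. The paper's snake-lemma diagram tacitly avoids this by writing $0\to\ker\beta\to\tN\to N\to 0$ as exact. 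So state that you take $G_\beta=\ker(T_{\tN}\to T_N)$; with that convention your Steps~1--3 go through as written.
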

\begin{proof}
The fact that $\sXCar$ is DM is shown in \cite{Kuw20}*{Lemma 5.4}.
  
To see that $\beta$ induces a coarse moduli morphism, first observe that any combinatorial equivalence is a good moduli morphism by \cite{GS}*{Theorem 6.3} (since a combinatorial equivalence trivially satisfies the conditions of the theorem). 
Now any good moduli morphism satisfies the universal property of a coarse moduli map by \cite{alper13}*{Theorem 4.16 vi}.

The other property of a coarse moduli map is that it induces a bijection on $\CC$-points.  
Since $\CC$ is algebraically closed, the only $G_\beta$-torsor over $\Spec(\CC)$ is $G_\beta$ itself.  By definition, the $\CC$-points of $[X_{\widetilde \Sigma}/G_\beta]$ correspond to $G_\beta$-equivariant maps $G_\beta \to X_{\widetilde \Sigma}$.  So $k$-points just correspond to $G_\beta$-orbits.

We now show that $G_\beta$-orbits are in bijection with points of $X$ since $\beta$ is a combinatorial equivalence.  Namely, since $\beta\colon \widetilde \Sigma \to \Sigma$ is a combinatorial equivalence,  the orbit-cone correspondence tells us that there is a bijection of torus orbits $O(\widetilde \sigma) \leftrightarrow O(\sigma)$.  Recall that for a cone $\sigma$, the stabilizer of the torus orbit $O(\sigma)$ is $N_\sigma \otimes_\ZZ \CC^*$ where $N_\sigma$ is the lattice spanned by the cone $\sigma$.  Now the map $\beta_\sigma\colon \widetilde N_{\tsigma} \to N_\sigma$  of lattices obtained by restricting $\beta$ becomes an isomorphism over $\R$ since $\beta$ is a combinatorial equivalence.  Therefore it is injective and the cokernel  $N_\sigma /\widetilde N_{\widetilde \sigma}$ is finite.  From the commutative diagram
\small
\[
\begin{tikzcd}
  & 0 \arrow[d]  \ar[r]
    & \widetilde N_{\tsigma} \arrow[d] \arrow[r, "{\beta_\sigma}"]
    & N_\sigma \arrow[d] \\
0 \arrow[r]
  & \ker \beta \arrow[d] \arrow[r]
  & \widetilde N \arrow[d] \arrow[r, "\beta"]
  & N \arrow[d] \arrow[r]
  & 0 \\
0 \arrow[r]
  & \ker \bar \beta \arrow[d] \arrow[r]
  & \widetilde N / \widetilde N_{\tsigma} \arrow[d] \arrow[r, "\bar \beta"]
  & N / N_\sigma \arrow[d] \arrow[r]
  & 0 \\
  & \ker \bar \beta /\ker \beta \arrow[r]
  & 0 \arrow[r]
  & 0,
\end{tikzcd}
\]
\normalsize 
so by the Snake Lemma, $\ker \bar \beta /\ker \beta \cong \widetilde N_{\tsigma} / \widetilde N_\sigma$ is a finite group. 
Therefore, tensoring the exact sequence 
$0 \to \ker \beta \to \ker \bar \beta \to \widetilde N/\widetilde N_{\tsigma} \to 0$ with $\CC^*$ yields a surjective map $\ker \beta \otimes_\ZZ \CC^* \to \ker \bar \beta \otimes_\ZZ \CC^* \to 0$.
Observe that the map $\bar \beta \otimes_{\ZZ} \CC^*$ can be identified with the map $O(\tsigma) \to O(\sigma)$ after choosing base points in both. 
Under this identification $\ker \bar \beta \otimes_\ZZ \CC^*$ is the fiber over a point in $O(\sigma)$ and $G_\beta = \ker \beta \otimes_\ZZ \CC^*$ is acting on the fibers via this surjective group homomorphism.  
In particular, the $G_\beta$-action preserves the fibers and acts transitively on them, establishing the desired bijection between $G_\beta$-orbits and points of $X$.
\end{proof}

Since the $\QQ$-Cartier stack $\sXCar$ of $X$ is encoded by the toric stack datum $(\widetilde{N}, N, \widetilde{\Sigma}, \id)$ by \Cref{prop:canonicalDMstack-toric}, the map $\sXCar \to X$ is given by the diagram
\begin{equation} 
\label{eq:coarseModuliMap}
\begin{tikzcd}
\tN \ar[r, "\beta"] \ar[d, "\beta"] & N \ar[d, "\id"] \\
N \ar[r, "{\id}"] & N.
\end{tikzcd}
\end{equation}

\begin{cor}\label{cor:DMandCoarseModuli}
   The $\QQ$-Cartier stack $\sXCar$ is Deligne-Mumford and the map $\sXCar\to X$  is a coarse moduli map.
\end{cor}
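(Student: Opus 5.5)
The plan is to apply \Cref{prop:combEquiv-implies-coarseModuli} to the toric stack datum $(\tN, N, \tSigma, \beta)$ of \Cref{def:toricStackDatum}. By \Cref{prop:canonicalDMstack-toric}, $\sXCar \cong [X_{\tSigma}/G_\beta]$, and the map to $X$ is the morphism of stack data in \eqref{eq:coarseModuliMap}. So both conclusions follow once we show that $\beta\colon \tN \to N$ is a combinatorial equivalence between $\tSigma$ and $\Sigma$ in the sense of \Cref{def:combIso}.

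Condition (2) of \Cref{def:combIso} is built into the construction. The cones of $\tSigma$ are exactly the $\tsigma = \operatorname{cone}(\overline{e_\rho} : \rho \in \sigma(1))$ for $\sigma \in \Sigma$. Since $\beta(\overline{e_\rho}) = u_\rho$, we get $\beta_\RR(\tsigma) = \sigma$. Inclusions of faces are preserved in both directions, because $\tau \preceq \sigma$ if and only if $\tau(1) \subseteq \sigma(1)$, and the same holds for the tilded cones. Hence $\tsigma \mapsto \beta_\RR(\tsigma)$ is a poset isomorphism $\tSigma \simeq \Sigma$.

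The main step is condition (1): $\beta_\RR|_{\tsigma}$ should be an isomorphism of cones, and for this it suffices that $\beta_\RR$ be injective on the linear span $V_{\tsigma} \ce \Span_\RR\{\overline{e_\rho} : \rho\in\sigma(1)\}$. Surjectivity onto $\sigma$ is already clear, and a linear injection carries the cone $\tsigma$ isomorphically onto its image. To prove injectivity, take $v = \sum_{\rho\in\sigma(1)} c_\rho \overline{e_\rho}$ with $\beta_\RR(v) = \sum c_\rho u_\rho = 0$. First reduce to rational $c_\rho$, since the kernel of a rational linear map is spanned by rational vectors. Then clear denominators to get integer coefficients $a_\rho$ with $\sum_{\rho\in\sigma(1)} a_\rho u_\rho = 0$. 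By \eqref{eqn:defnL}, $\sum a_\rho e_\rho \in \tL$, so it maps to $0$ in $\tN$, and therefore $v = 0$. Here we must make sure the relation lies in the span of $e_\rho$ for $\rho\in\sigma(1)$ only, which it does by construction.

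A subtlety is that $\tSigma$ must genuinely be a fan of strongly convex cones in $\tN_\RR$. This also follows from the injectivity just shown: $\tsigma$ is linearly isomorphic to the strongly convex cone $\sigma$. The face-intersection property then transfers via the poset bijection and $\beta_\RR$. One can also use the identification $\sigma_{\checkS} = |\tSigma|$ from \Cref{lem:affineComparison} as a cross-check. With both conditions verified, \Cref{prop:combEquiv-implies-coarseModuli} gives that $\sXCar$ is Deligne–Mumford and that $\sXCar \to X$ is a coarse moduli map.
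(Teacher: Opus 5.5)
Your proposal is correct and follows the paper's proof: identify $\sXCar$ with the toric stack of the datum via \Cref{prop:canonicalDMstack-toric}, check that $\beta$ is a combinatorial equivalence from $\tSigma$ to $\Sigma$, and invoke \Cref{prop:combEquiv-implies-coarseModuli}. The paper only says the combinatorial equivalence is ``immediate from the definition of $\tSigma$.'' You spell out that step by showing $\beta_\RR$ is injective on each $\Span\tsigma$ using the integral relations in $\tL$, and then derive the fan and poset properties from it.
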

\begin{proof}
\Cref{prop:canonicalDMstack-toric} shows that $\sXCar$ is described by the toric stacky datum from \Cref{def:toricStackDatum} and the fact that $\beta\colon \tN \to N$ from \Cref{def:toricStackDatum} induces a combinatorial equivalence from $\tSigma$ to $\Sigma$ is immediate from the definition of $\tSigma$.  
It is immediate from the definition of $\tSigma$ that $\beta\colon \tN \to N$ from \Cref{def:toricStackDatum} induces a combinatorial equivalence from $\tSigma$ to $\Sigma$. 
Thus the desired result follows from \Cref{prop:combEquiv-implies-coarseModuli}.
\end{proof}

We now turn to \Cref{thm:coarseModuli}.(2), which is the universal property.

\begin{prop}\label{prop:univProperty}
Let $Y$ be an algebraic variety with a map $f\colon Y \to X$. If every $\QQ$-Cartier divisor on $X$ pulls back to a Cartier divisor on $Y$, then $f$ factors through $\sXCar$.
\end{prop}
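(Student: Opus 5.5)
To lift $f$, I would use the description of maps into a quotient stack $[W/\checkG]$ with $W=\Spec(\checkS)-V(\checkB)$ and $\checkG=\Hom(\QCart(X),\CC^*)$ diagonalizable. By \Cref{prop:diagonalizableAffineCase} and \Cref{prop:equivSheaves-gradedMods-final}, such a map $Y\to[W/\checkG]$ is a $\checkG$-torsor $P\to Y$ together with an equivariant map $P\to W$. A $\checkG$-torsor is the relative spectrum of a $\QCart(X)$-graded sheaf of algebras $\bigoplus_{d}\cL_d$ with each $\cL_d$ invertible and the multiplications $\cL_d\otimes\cL_{d'}\to\cL_{d+d'}$ isomorphisms. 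An equivariant map $P\to\Spec(\checkS)$ is then a graded algebra homomorphism $\checkS\to\bigoplus_d\Gamma(Y,\cL_d)$, that is, sections $s_\rho^{a}\in\Gamma(Y,\cL_{d})$ for the monomials of $\checkS$, compatible with multiplication. So the plan has three steps: build the $\cL_d$, build the sections, and check that the image avoids $V(\checkB)$ and that the composite to $X$ is $f$.

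\textbf{Step 1 (the line bundles).} For $d\in\QCart(X)$ choose a torus-invariant representative $D\in\CDiv^{\QQ}_{T_N}(X)$. By hypothesis $f^*D$ is Cartier, so $\cL_D\ce\cO_Y(f^*D)$ is invertible. Linear equivalence $D\sim D'$ differs by $\operatorname{div}(\chi^m)$, which pulls back to a principal divisor, giving canonical isomorphisms $\cL_D\cong\cL_{D'}$. Additivity of pullback of Cartier divisors gives $\cL_D\otimes\cL_{D'}\cong\cL_{D+D'}$. To avoid cocycle issues, I would work directly with the lattice $\CDiv^{\QQ}_{T_N}(X)$ and the torus $\Hom(\CDiv^{\QQ}_{T_N},\CC^*)$, and then descend along the quotient by $M$. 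Equivalently, I would use the stack datum of \Cref{prop:canonicalDMstack-toric}: a map to $[X_{\tSigma}/G_\beta]$ lying over $X$ amounts to a lift of $f$ compatible with the character lattice $\tN^\vee=\CDiv^{\QQ}_{T_N}(X)$ from \Cref{lem:affineComparison}.

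\textbf{Step 2 (the sections).} Each effective $D=\sum a_\rho D_\rho$ in $\CDiv^{\QQ}_{T_N}(X)\cap\NN^{\Sigma(1)}$ has a canonical section $1\in\Gamma(X,\cO_X(D))$. Its pullback gives $s_D\in\Gamma(Y,\cO_Y(f^*D))$. Multiplicativity $s_Ds_{D'}=s_{D+D'}$ holds because pullback of effective Cartier divisors is additive. This holds when $f(Y)$ is not contained in $\operatorname{Supp}D$; this is the delicate point, addressed below. This yields the graded homomorphism $\checkS=\CC[\CDiv^{\QQ}_{T_N}(X)\cap\NN^{\Sigma(1)}]\to\bigoplus\Gamma(Y,\cL_D)$ and hence $P\to\Spec(\checkS)$.

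\textbf{Step 3 (avoiding $V(\checkB)$ and compatibility).} Locally over $U_\sigma$, the monomial $\check x_{\sigma^c}$ of \Cref{lem:checkxsigma} has divisor supported away from $U_\sigma$. Its pulled-back section is therefore a generator on $f^{-1}(U_\sigma)$, so the image of $P$ lies in $W$. The composite $Y\to\sXCar\to X$ recovers $f$ by \Cref{lem:checkxsigma}, since degree-zero ratios $g/\check x_{\sigma^c}^\ell$ pull back to $f^*$ of the corresponding functions on $U_\sigma$.

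I expect the main obstacle to be Step 2 when $f(Y)$ lies inside torus-invariant divisors, where $f^*D$ is not defined naively. I would handle this by working locally on $U_\sigma$, where $D$ is principal up to torsion. Concretely, $mD|_{U_\sigma}=\operatorname{div}(\chi^{-m_\sigma})$, and $D$ itself is Cartier after pulling back by hypothesis. I would then define $\cL_D$ and $s_D$ by gluing the local data $f^*(\chi^{\cdot})$, with transition functions given by ratios of characters that are units on overlaps. The hypothesis is exactly what guarantees local generators exist even for non-Cartier $D$. Checking the cocycle and multiplicativity conditions then reduces to identities among characters.
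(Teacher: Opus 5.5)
This is essentially the paper's proof: the paper also builds the $\checkG$-torsor $\Spec\bigl(\bigoplus_{D}\cO_Y(f^*D)\bigr)$ (invertibility is \Cref{lem:torsor}) and the pullback map $\checkS\to\bigoplus_D\Gamma(Y,\cO_Y(f^*D))$, $s\mapsto f^*s$, and like your Steps 1--2 it treats $f^*D$ as defined, so your closing boundary-case sketch goes beyond what the paper does (though as written it would need more than ratios of characters, since for non-Cartier $D$ the local data $m_\sigma/m$ lie only in $M_{\QQ}$ and the required roots must be chosen compatibly in $D$). The one substantive difference is how you show the image avoids $V(\checkB)$: the paper compares with the $G_{\Pic}$-torsor via $X\cong\sX_{\Pic}$ (\Cref{thm:Pic-iso}), whereas you observe that $f^*\check{x}_{\sigma^c}$ is nowhere vanishing on $f^{-1}(U_\sigma)$ and use \Cref{lem:checkxsigma}, which is more elementary and also checks that the composite recovers $f$.
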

\begin{proof}
Consider the relative spectrum of the sheaf of algebras 
\[
Y_{\checkG} \ce  \Spec\left(\bigoplus_{D \in \QCart(X)} \cO_Y(f^*D)\right), 
\]
and denote its global functions by 
the affinization 
$\check Y \ce \bigoplus_{D\in \QCart(X)} \Gamma(Y, \cO_Y(f^*D))$.

Now there is a homogeneous ring homomorphism
\begin{align*}
g^\sharp\colon \checkS &\to \check Y \\
s & \mapsto f^*s, 
\end{align*}
which induces a map $Y_{\checkG} \xrightarrow{a} \Spec(\check Y) \xrightarrow{g} \Spec(\checkS)$, where $a$ is the affinization map. 
Since $Y_{\checkG}$ is a $\checkG$-torsor over $Y$ by Lemma~\ref{lem:torsor}, there is a map
$h\colon Y \to [\Spec(\checkS) / \checkG]$.  
Thus by \Cref{thm:Pic-iso},  $X \cong [\Spec(S_{\Pic}) - V(B_{\Pic})/G_{\Pic}]$, where $G_{\Pic}\ce \Hom_\ZZ(\Pic(X),\CC^*)$.   
Under this isomorphism, the map $f$ corresponds to the diagram:
\[
\begin{tikzcd}
Y_{\Pic} \ar[r] \ar[d] & \Spec(S_{\Pic}) -V(B_{\Pic}) \ar[d] \\
Y \ar[r, dashed, "f"] & X, 
\end{tikzcd}
\]
where $Y_{\Pic}$ is the $G_{\Pic}$-torsor given by the relative spectrum
\[
Y_{\Pic} \ce \Spec\left(\bigoplus_{D \in \Pic(X)} \cO_Y(f^*D)\right). 
\]
This fits into a larger commutative diagram: 
\[
\begin{tikzcd}
Y_{\checkG} \ar[dr] \ar[r, "h"] \ar[ddr] & \Spec(\checkS) \ar[dr] \\
& Y_{\Pic} \ar[r] \ar[d] & \Spec(S_{\Pic}) -V(B_{\Pic})\ar[d] \\
& Y \ar[r, dashed, "f"] & X 
\end{tikzcd}
\]
It now follows that the image of $h$ lies in the preimage of $\Spec(S_{\Pic}) - V(B_{\Pic})$ in $\Spec(\checkS)$, i.e., it is contained in $\Spec(\checkS) - V(\checkB)$, completing the proof.
\end{proof}

\begin{lemma} \label{lem:torsor}
    The relative spectrum $Y_{\checkG}$ is a $\checkG$-torsor over $Y$.
\end{lemma}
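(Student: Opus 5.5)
The plan is to prove the statement locally on $Y$. The key input is the hypothesis of \Cref{prop:univProperty}: for every $D\in\QCart(X)$, the pullback $f^*D$ is Cartier, so $\cO_Y(f^*D)$ is an invertible sheaf. We also need the algebra $\mathcal{A}\ce\bigoplus_{D\in\QCart(X)}\cO_Y(f^*D)$ to be well defined, so that multiplication $\cO_Y(f^*D)\otimes\cO_Y(f^*D')\to\cO_Y(f^*(D+D'))$ is an isomorphism. To arrange this, I will work with torus-invariant representatives $D=\sum a_\rho D_\rho$, which form the lattice $\CDiv^\QQ_{T_N}(X)$. I will then pass to $\QCart(X)$ by the standard Cox-ring construction, fixing a splitting or choosing representatives compatibly as in \cite{CLSToricVarieties}*{Section 5.4}, so that multiplication is associative and the pieces are indexed by the group $\QCart(X)$. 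Thus $\mathcal{A}$ is a $\QCart(X)$-graded $\cO_Y$-algebra in which each graded piece is a line bundle and multiplication is an isomorphism on graded pieces.

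Given this, I will show that $\mathcal{A}$ is locally isomorphic to $\cO_Y[\QCart(X)]=\cO_Y\otimes_\CC\CC[\QCart(X)]$. Since $\QCart(X)$ is a finitely generated abelian group, I will choose generators $d_1,\dots,d_r$. I will then cover $Y$ by opens $V$ on which each $\cO_Y(f^*d_i)$ is trivial, picking nowhere-vanishing local sections $s_i$. Write $\QCart(X)\cong\ZZ^k\oplus T$ with $T$ finite. For the free part, the monomials in the $s_i$ give a graded isomorphism $\cO_V[\ZZ^k]\cong\bigoplus_{D\in\ZZ^k}\cO_V(f^*D)$. For the torsion part, if $md=0$ then $s^m$ is a nowhere-vanishing section of $\cO_V$. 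To split the extension of $\CC$-points, I will refine $V$ so that $s^m$ admits an $m$-th root $u$; this can be done in the étale topology, which suffices for torsors. Replacing $s$ by $s/u$ gives $s^m=1$. Hence $\mathcal{A}|_V\cong\cO_V[\QCart(X)]$ as graded algebras.

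Finally, the grading corresponds to the coaction $\mathcal{A}\to\mathcal{A}\otimes\CC[\QCart(X)]$, $a\mapsto a\otimes\chi^{\deg a}$, which defines a $\checkG=\Spec\CC[\QCart(X)]$-action on $Y_{\checkG}$ over $Y$. Under the local trivialization above, this becomes the action of $\checkG$ on $V\times\checkG$ by translation. The map $Y_{\checkG}\to Y$ is affine and faithfully flat, since it is locally $V\times\checkG\to V$. The action map $\checkG\times Y_{\checkG}\to Y_{\checkG}\times_Y Y_{\checkG}$ is an isomorphism because it is one locally. Therefore $Y_{\checkG}$ is a $\checkG$-torsor.

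I expect the main obstacle to be the torsion in $\QCart(X)$, which makes $\checkG$ nonconnected. With torsion, the line bundles only trivialize the algebra étale-locally, not Zariski-locally, and one must check that the choices of representatives make $\mathcal{A}$ an honest commutative graded algebra. I would handle this by working in the étale or fppf topology from the start, noting that torsors for diagonalizable groups are fppf-local objects.
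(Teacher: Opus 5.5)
Your proof is correct, but it takes a different route from the paper's. The paper argues pointwise. The fiber of $Y_{\checkG}$ over $y$ is $\Spec\bigl(\bigoplus_{D\in\QCart(X)}\cO_Y(f^*D)|_y\bigr)$, and since each $\cO_Y(f^*D)$ is locally free by hypothesis, this fiber is identified with $\Spec\bigl(\bigoplus_{D\in\QCart(X)}\CC\bigr)=\checkG$. That identification is taken as the conclusion.

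You instead build graded trivializations $\mathcal{A}|_V\cong\cO_V[\QCart(X)]$ on an \'etale cover and check that they turn the coaction into translation. You then descend faithful flatness and the isomorphism $\checkG\times Y_{\checkG}\to Y_{\checkG}\times_Y Y_{\checkG}$. The paper's version is shorter, but to get an actual torsor from it one still needs the things you supply:
\begin{enumerate}
\item $\mathcal{A}$ is a genuine commutative algebra whose multiplication maps on graded pieces are isomorphisms;
\item the fiberwise identification is equivariant;
\item some passage from fibers to the whole family, either flatness of $\mathcal{A}$ plus a fiberwise isomorphism criterion, or your local trivializations.
\end{enumerate}

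Your route also exposes what the paper's ``isomorphic'' hides. A graded algebra with one-dimensional pieces is a priori only a twisted group algebra. Untwisting it on the torsion of $\QCart(X)$ requires extracting roots. This is automatic at a $\CC$-point, but globally it means that when $\QCart(X)$ has torsion the torsor is in general only \'etale-locally trivial, not Zariski-locally trivial. So your emphasis on the \'etale topology is warranted, not just cautious.

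One small correction. If $\QCart(X)$ has torsion, there is no group splitting $\QCart(X)\to\CDiv^{\QQ}_{T_N}(X)$, because the target is free. So only your second option works: choose torus-invariant representatives and compare them via the characters $\chi^m$. This is consistent (associative) because $X$ has no torus factors, so $m$ is determined by $\operatorname{div}(\chi^m)$.
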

\begin{proof}
Since $Y_{\checkG}$ is a relative spectrum, the fiber over $y \in Y$ is the affine variety 
\[
\Spec\left(\bigoplus_{D\in \QCart(X)} \cO_Y(f^*D)|_{y}\right).
\]
Since by assumption  $f$ satisfies the universal property, $\cO_Y(f^*D)$ is locally-free, so that this is isomorphic to 
$\Spec\left(\bigoplus_{D\in\QCart(X)} \CC \right) = \checkG$, as desired.
\end{proof}

Putting this all together, we are ready to prove \Cref{thm:coarseModuli}.

\begin{thmB}
If $X$ is a semiprojective toric variety, not necessarily simplicial, then the following hold. 
\begin{enumerate}
\item The $\QQ$-Cartier stack $\sXCar$ of $X$ is a toric\footnote{A purely toric construction of $\sXCar$ is given in \Cref{sec:construction}.} DM stack with coarse moduli space $X$. 
\item The stack $\sXCar$ is universal for the property that the pullback of any $\QQ$-Cartier divisor is Cartier. 
\item The stack $\sXCar$ is smooth if and only if $X$ is simplicial.
\end{enumerate}
\end{thmB}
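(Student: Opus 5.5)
Parts (1) and (2) are essentially assembled from earlier results, so the plan is to cite them and put the real work into (3).

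For (1), \Cref{prop:canonicalDMstack-toric} identifies $\sXCar$ with the toric stack of the datum $(\tN, N, \tSigma, \beta)$ from \Cref{def:toricStackDatum}. This makes $\sXCar$ toric in the sense of Geraschenko--Satriano. \Cref{cor:DMandCoarseModuli} then gives both that it is Deligne--Mumford and that $\sXCar \to X$ is a coarse moduli map, via the combinatorial equivalence in \Cref{prop:combEquiv-implies-coarseModuli}.

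For (2), I would argue in two directions.

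First, the pullback of every $\QQ$-Cartier divisor to $\sXCar$ is Cartier. A $\QQ$-Cartier class $d\in\QCart(X)=\wcheckG$ is a character of $\checkG$, so it defines the line bundle $\checkS(d)|_{\sXCar}$ on the quotient stack. Since $\checkB$ contains, for each $\sigma$, a unit monomial $\check{x}_{\sigma^c}$ by \Cref{lem:checkxsigma}, this sheaf is invertible. Its pushforward, or equivalently its degree-$0$ part, recovers $\cO_X(d)$.

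Second, any $f\colon Y\to X$ with this property factors through $\sXCar$; this is \Cref{prop:univProperty} together with \Cref{lem:torsor}. For uniqueness of the factorization up to $2$-isomorphism, I would note that the torsor $Y_{\checkG}$ is determined by the line bundles $f^*\cO_X(D)$. The map to $\Spec\checkS$ is then determined by pulling back sections, so any two lifts induce isomorphic torsor data.

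For (3), I would work with the toric stack datum. A toric stack $[X_{\tSigma}/G_\beta]$ with $G_\beta$ a torus acting with finite stabilizers is smooth if and only if $X_{\tSigma}$ is smooth. So it suffices to decide when $X_{\tSigma}=\Spec(\checkS)-V(\checkB)$ is smooth, cone by cone: each $\tsigma$ must be generated by part of a $\ZZ$-basis of $\tN$.

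Suppose $X$ is simplicial. Then \Cref{ex:canonicalSimpicial} gives $\tL=0$, so $\tN=\ZZ^{\Sigma(1)}$ and every $\tsigma$ is a coordinate cone, hence smooth.

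Conversely, suppose $X$ is not simplicial, and pick a nonsimplicial cone $\sigma$. Under the combinatorial equivalence of \Cref{def:combIso}, $\beta_\RR$ restricts to an isomorphism $\tsigma\cong\sigma$. Then $\tsigma$ has $|\sigma(1)| > \dim\sigma = \dim\tsigma$ rays, so it is not simplicial, let alone smooth. Hence $X_{\tSigma}$, and therefore $\sXCar$, is singular.

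The main obstacle is the smoothness criterion for quotient stacks: one needs smoothness of $[U/G]$ to be equivalent to smoothness of $U$. This holds because $U\to[U/G]$ is a smooth atlas, as $G$ is a smooth group, so smoothness descends and ascends along it. With that in hand, (3) reduces to the cone counting above.
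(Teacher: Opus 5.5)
Your proposal follows the paper's proof. Parts (1) and (2) come from \Cref{cor:DMandCoarseModuli} and \Cref{prop:univProperty}, and (3) uses the same dichotomy: simplicial gives $\tL=0$, i.e.\ the smooth canonical stack, while a nonsimplicial $\sigma$ yields a nonsimplicial $\tsigma$ under the combinatorial equivalence; your smooth-atlas remark supplies a step the paper leaves implicit.

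The one caveat concerns your uniqueness sketch in (2), which the paper does not attempt and which is not yet justified. If $md$ is Cartier but $d$ is not, a lift of $f$ determines the pullback of the character-$d$ line bundle only up to an $m$-torsion twist, so you would still need to pin it down, for instance via divisors of pulled-back sections. Separately, invertibility of $\checkS(d)|_{\sXCar}$ is automatic because $\checkS(d)$ is free, so the unit monomials are not needed there.
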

\begin{proof}
Parts (1) and (2) follow from \Cref{cor:DMandCoarseModuli} and \Cref{prop:univProperty}, respectively.
For (3), if $X$ is simplicial, then \cite{CLSToricVarieties}*{Proposition 4.2.7} implies that $\sXCar \ce [\Spec (\checkS) - V(\checkB)/\checkG] = [\Spec(S)-V(B) / G]$, which is smooth.  
Conversely, if $X$ is not simplicial, then $\Spec(\checkS) - V(\checkB)$ has a nonsimplicial cone via the combinatorial equivalence. 
Hence, it is not smooth.
\end{proof}

\begin{remark}
We have now established a coarse moduli map $\pi\colon \sXCar \to X$. 
Equivalently, the notation $M|_X$ from \Cref{def:barNotation} can be defined to be $\pi_*(M|_{\sXCar})$.
\end{remark}

\begin{thmA}[Toric Ideal-Variety Correspondence]
If $X$ is a semiprojective toric variety, not necessarily simplicial, then there is a bijective correspondence between closed subvarieties of $X$ and radical homogeneous ideals $I \subseteq \checkB \subseteq \checkS$.
\end{thmA}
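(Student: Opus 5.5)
This is the stacky re-proof of \Cref{thm:nullstellensatz-var}, which now follows from Theorem B(1) together with the general correspondence in \Cref{cor:coarseModuli-bijection}. The plan has three steps.

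\emph{Step 1: apply the coarse moduli correspondence.} Take $R=\checkS$, $J=\checkB$, $H=\checkG$ and $Y=\Spec(\checkS)-V(\checkB)$, so that $[Y/H]=\sXCar$. By \Cref{lem:affineComparison}, $\wcheckG=\QCart(X)$, so $\checkG$ is diagonalizable and the $\wH$-grading on $\checkS$ is exactly its $\QCart(X)$-grading. Theorem B(1), via \Cref{cor:DMandCoarseModuli}, says that $\pi\colon\sXCar\to X$ is a coarse moduli map. Hence \Cref{cor:coarseModuli-bijection} applies.

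\emph{Step 2: pass from closed subsets of $X$ to radical ideals.} Condition (1) of \Cref{defn:coarseModuli} gives a bijection between $\CC$-points of $X$ and $\checkG$-orbits on $Y$, exactly as in the proof of \Cref{prop:combEquiv-implies-coarseModuli}. Condition (2) gives the universal property, and \Cref{prop:combEquiv-implies-coarseModuli} shows $\pi$ is in fact a good moduli map. From these, the argument of \Cref{prop:geometricquotientClosedSub} applies: the maps $Z\mapsto\pi^{-1}(Z)$ and $W\mapsto\pi(W)$ are mutually inverse between closed subsets of $X$ and $\checkG$-invariant closed subsets of $Y$. These in turn correspond to closed subsets of $\Spec(\checkS)$ that are $\checkG$-stable and contain $V(\checkB)$, via $W\mapsto \overline{W}\cup V(\checkB)$. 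Such sets are the same as radical ideals $I\subseteq\checkB$, and $\checkG$-stability is equivalent to homogeneity for the diagonalizable group $\checkG$. Irreducible closed subsets (subvarieties) correspond under this bijection, as in the statement.

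\emph{Main obstacle.} The delicate point is whether the closed map $\pi$ really sends $\checkG$-invariant closed sets to closed sets, and whether disjoint invariant closed sets have disjoint images. The bijection on points alone only gives $\pi^{-1}(\pi(W))=W$. I would obtain both properties from the good moduli property in \Cref{prop:combEquiv-implies-coarseModuli}, since good moduli maps are universally closed and separate disjoint closed substacks (cite \cite{alper13}). Alternatively, one can simply invoke \Cref{prop:goodCatQuot}. Either way, the stacky input reduces the theorem to the bookkeeping in Step 2.
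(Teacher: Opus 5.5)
Your proposal is correct and is essentially the paper's stacky proof: \Cref{thm:coarseModuli}(1) is fed into \Cref{cor:coarseModuli-bijection}, and your Step~2 unpacks that corollary using the ingredients of the first proof. Specifically, it uses \Cref{prop:geometricquotientClosedSub}, with closedness of images of invariant closed sets coming from the good-quotient property, i.e.\ property~(3) of \Cref{def:goodCatQuot} via \Cref{prop:goodCatQuot}; so your ``main obstacle'' is resolved exactly as you suggest. One small correction: the bijection you prove, and the one the statement intends by ``closed subvarieties,'' is between all closed subsets of $X$ and radical homogeneous ideals $I\subseteq\checkB$. Irreducible closed subsets correspond only to some of these ideals, so your final sentence should not identify subvarieties with irreducible closed subsets.
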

\begin{proof}[Stacky proof of Theorem~\ref{thm:nullstellensatz-var}]
This follows immediately from \Cref{thm:coarseModuli}, \Cref{cor:coarseModuli-bijection}, and \Cref{prop:canonicalDMstack-toric}. 
\end{proof}

\section{The Picard perspective}
\label{sec:picard}

Comparing with \Cref{thm:coarseModuli}, this section shows how a subring can be chosen that also provides a coarse moduli space for a semiprojective toric variety $X$ with fan $\Sigma$ and toric stack datum from \Cref{def:toricStackDatum}, 
which yields a generalization of \cite{cox:homog}*{Theorem 3.7}, see \Cref{cor:CoxQcoh}. 
Recall the definitions of $S_{\Pic}, B_{\Pic}$ and $G_{\Pic}$ from \Cref{defn:PicCox}, see also \Cref{ex:P113-3}.  
Note that the restriction of the Cox ring to Picard degrees was already considered in 
\cite{cox:homog} in the simplicial toric case and for more general settings in  
\cite{EKW-coxPic,AH2006-Tvar, brion07-wonderful, book2015-coxRings}. 

\begin{theorem} \label{thm:Pic-iso}
There is an isomorphism of stacks
\[
\psi\colon \left[\left(\Spec(S_{\Pic}) - V(B_{\Pic})\right) / G_{\Pic}\right] \to  X. 
\]
In particular, $X$ is a coarse moduli space for $\left[\left(\Spec(S_{\Pic}) - V(B_{\Pic})\right)/G_{\Pic}\right]$.
\end{theorem}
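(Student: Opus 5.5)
To prove \Cref{thm:Pic-iso}, we will check that the quotient $Y_{\Pic}\ce \Spec(S_{\Pic})-V(B_{\Pic})$ is a $G_{\Pic}$-torsor over $X$. Then $[Y_{\Pic}/G_{\Pic}]\to X$ is an isomorphism, because a quotient of a torsor by its structure group is the base. Since this is local on $X$, we work chart by chart on the cover $\{U_\sigma\}_{\sigma\in\Sigma_{\max}}$.

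First, we adapt \Cref{lem:checkxsigma} to Picard degrees. For each $\sigma\in\Sigma_{\max}$, the proof of that lemma gives positive integers $a_\rho$ with $\sum_{\rho\notin\sigma(1)} a_\rho\deg(x_\rho)$ ample. Ample divisors on a semiprojective toric variety are Cartier, so the monomial $x^{\Pic}_{\sigma^c}\ce\prod_{\rho\notin\sigma(1)}x_\rho^{a_\rho}$ lies in $S_{\Pic}$, and in fact in $B_{\Pic}$. The same fraction argument as before then gives $S_{\Pic}[(x^{\Pic}_{\sigma^c})^{-1}]_0=S[x_{\sigma^c}^{-1}]_0=\CC[\sigma^\vee\cap M]$. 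Because these monomials generate $B_{\Pic}$ up to radical, the open sets $D(x^{\Pic}_{\sigma^c})$ cover $Y_{\Pic}$. The argument of \Cref{prop:goodCatQuot} then shows that $Y_{\Pic}\to X$ is affine, $G_{\Pic}$-invariant, and restricts over $U_\sigma$ to $\Spec$ of the degree-zero map $R_0\to R\ce S_{\Pic}[(x^{\Pic}_{\sigma^c})^{-1}]$.

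The key step is to show that $R$ is a strongly graded $\Pic(X)$-graded $R_0$-algebra, with each $R_d$ an invertible $R_0$-module. Equivalently, we must show $R\cong R_0\otimes_\CC \CC[\Pic(X)]$ after possibly shrinking, so that $\Spec R\to\Spec R_0$ is a trivial $G_{\Pic}$-torsor. For this we use that every Cartier divisor is locally principal on $U_\sigma$. For $d\in\Pic(X)$, choose a torus-invariant representative $D=\sum b_\rho D_\rho$. On the affine chart $U_\sigma$ there is $m_\sigma\in M$ with $D|_{U_\sigma}=\operatorname{div}(\chi^{m_\sigma})|_{U_\sigma}$, by \cite{CLSToricVarieties}*{Theorem 4.2.8}. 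In Cox coordinates this means we can build a Laurent monomial $u_d=\prod_\rho x_\rho^{\langle m_\sigma,u_\rho\rangle+b_\rho}$ of degree $d$. Its exponents vanish for $\rho\in\sigma(1)$, so it involves only the inverted variables, and hence $u_d$ is a unit of $R$ lying in $R_d$. Multiplication by $u_d$ then gives $R_0\cong R_d$. Choosing a basis of a free part of $\Pic(X)$ together with a splitting, we get $R\cong R_0[\Pic(X)]$ equivariantly. The torsion part of $\Pic(X)$ needs care here, but the units $u_d$ still give the strongly graded property, which is what $G_{\Pic}$-torsors over affine bases require.

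It follows that $Y_{\Pic}\to X$ is a $G_{\Pic}$-torsor, and hence that $\psi$ is an isomorphism. The coarse moduli statement is then immediate, since an isomorphism is trivially a coarse moduli map. We expect the main obstacle to be verifying that these local units exist exactly for Picard (not merely $\QQ$-Cartier) degrees and are compatible on overlaps. In particular, the stabilizers of $G_{\Pic}$ must be trivial. This follows from the existence, for every $d\in\Pic(X)$, of a homogeneous unit of degree $d$ near each point, which is precisely the local freeness of $\cO_X(d)$.
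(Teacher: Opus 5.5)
Your proposal is correct. It shares the paper's overall strategy: show that $\Spec(S_{\Pic})-V(B_{\Pic})\to X$ is a $G_{\Pic}$-torsor by trivializing it over each $U_\sigma$. The key local step, however, is carried out differently.

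\textbf{The paper's route.} The paper works on the toric side. It identifies $\Spec(S_{\Pic})-V(B_{\Pic})$ as the toric variety of the same fan $\tSigma$, but over a different lattice $\tN'$ dual to $\SF(\Sigma,N)$. It shows that $\beta$ restricts to a lattice isomorphism $\tN'_{\tsigma}\to N$, using that support functions are linear and integral on $\sigma$. It then uses the split sequence $0\to \Pic(X)^\vee\to\tN'\to N\to 0$ to get $X_{\tsigma}\cong X_\sigma\times G_{\Pic}$.

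\textbf{Your route.} You work in the graded ring $R=S_{\Pic}[(x^{\Pic}_{\sigma^c})^{-1}]$. Local principality of Cartier divisors on $U_\sigma$ produces a homogeneous unit $u_d\in R_d$ for every $d\in\Pic(X)$. Hence $R_d=u_dR_0$ and $R\cong R_0\otimes_\CC \CC[\Pic(X)]$.

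\textbf{Comparison.} Both arguments ultimately rest on the same Cartier data $m_\sigma$: your exponent vector is precisely the linear function $\varphi|_\sigma=\langle m_\sigma,\cdot\rangle$ that drives the paper's surjectivity argument. Your version is more elementary and self-contained. It stays inside the Cox-ring localization framework of Section~\ref{sec:varProof} and makes the trivialization explicit. It also avoids identifying $\Spec(S_{\Pic})$ as a toric variety over the lattice $\tN'$ and the combinatorial-equivalence language. The paper's version, in exchange, places $\sX_{\Pic}$ in the toric-stack-datum framework. That is what makes the map $\sXCar\to\sX_{\Pic}$ in Remark~\ref{rem:noKuwagaki} visibly a morphism of toric stacks with the same fan and a change of lattice.

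\textbf{Small remarks.}
\begin{enumerate}
\item No shrinking is needed.
\item The torsion worry is moot. A semiprojective $X$ has full-dimensional maximal cones, so $\Pic(X)$ is free by \cite{CLSToricVarieties}*{Proposition 4.2.5}, and $G_{\Pic}$ is a torus; the paper's split sequence implicitly uses this as well. Choosing units $u_{d_1},\dots,u_{d_k}$ for a basis $d_1,\dots,d_k$ of $\Pic(X)$ then gives $R\cong R_0[t_1^{\pm1},\dots,t_k^{\pm1}]$ directly.
\item Since the torsor condition is local on the base, you should state that $\pi^{-1}(U_\sigma)$ is exactly $D(x^{\Pic}_{\sigma^c})$. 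This is the standard Cox-style gluing check: inverting the degree-zero monomial that cuts out $U_\sigma\cap U_\tau$ inside $U_\tau$ also inverts a power of $x^{\Pic}_{\sigma^c}$.
\end{enumerate}
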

\begin{proof}
The affine toric varieties $\Spec(S_{\Pic}) - V(B_{\Pic})$ and $\Spec(\checkS) - V(\checkB)$ have the same fan but differ by the choice of underlying lattice. 
Namely, by \Cref{lem:affineComparison}, the lattice for $\Spec(S_{\Pic}) - V(B_{\Pic})$ is 
\scriptsize
\[
\widetilde N'\ce\left\<
\sum a_\rho e_\rho \ \in \QQ^{\Sigma(1)}\Big\vert\, \sum a_\rho\varphi(u_\rho) \in \ZZ, \forall \varphi \in \SF(\Sigma, N) 
\right\> \Bigg/ \left\<
\sum_{\rho \in \sigma(1)} a_\rho e_\rho \ \in \QQ^{\Sigma(1)}\Big\vert\, \sum a_\rho u_\rho = 0, \sigma \in \Sigma
\right\>. 
\]
\normalsize
Hence, by \Cref{prop:canonicalDMstack-toric}, the map $\psi$ comes from a combinatorial equivalence of fans.

Now, to show that $\psi$ is an isomorphism of stacks is equivalent to showing that $\Spec(S_{\Pic}) - V(B_{\Pic})$ is a $G_{\Pic}$-torsor over $X$.  
That means that locally on $U \subseteq X$,  $(\Spec(S_{\Pic}) - V(B_{\Pic}))|_U \cong U \times G_{\Pic}$.  
If $U = X_\sigma$ for some $\sigma \in \Sigma_{\max}$, then since $\psi$ is a combinatorial equivalence, its preimage $\psi^{-1}(X_\sigma)$ is given by the affine toric variety $X_{\widetilde \sigma}$ where $\widetilde \sigma = \RR_{\geq 0}\left(e_\rho \mid\rho \in \sigma(1)\right)$ in $\widetilde N'_{\RR}$. 

Now, we claim that the there is an isomorphism of lattices
\[
\beta\colon \widetilde N'_{\tilde \sigma}   \to N.
\] First, we claim the map is surjective.  Indeed, if $n \in \sigma$, then $n = \sum_{\rho \in \sigma(1)} a_\rho u_\rho$ for some $a_\rho \in \QQ_{\geq 0}$.  Then consider $n ' = \sum_{\rho \in \sigma(1)} a_\rho e_\rho$.  This lies in $\widetilde N'_{\widetilde \sigma}$ since support functions are linear on the cone $\sigma$ and have integer values on lattice points and it lies in the span of $\widetilde \sigma$ by definition.  Furthermore $\beta(n') = n$, so the map is surjective.

Now, by the claim, $\beta$ is a surjective map of lattices.  Furthermore, $\widetilde N'_{\widetilde \sigma}$ has the same rank as $N$ since $\beta$ is a combinatorial equivalence.  It follows that $\beta\colon \widetilde N'_{\tilde \sigma} \to N$ is an isomorphism (as any surjective map of lattices of the same rank is an isomorphism).

Finally, the split exact sequence
$
0 \to \Pic(X)^\vee \to \widetilde N' \to N \to 0
$
and the isomorphism of fans and lattices  $\beta$ up to the torus factor $G_{\Pic} = \Pic(X)^\vee \otimes \CC^*$ imply that $X_{\widetilde \sigma}$ is isomorphic to $X_{\sigma} \times G_{\Pic}$ as $G_{\Pic}$.  Hence $\Spec(S_{\Pic}) - V(B_{\Pic})$ is a $G_{\Pic}$-torsor over $X$, as desired.
\end{proof}

\begin{defn}\label{def:barNot-Pic}
For a finitely generated graded $S_{\Pic}$-module $M$, 
the associated coherent sheaf is $M|_X \ce \psi_* (M|_{\left[\left(\Spec(S_{\Pic}) - V(B_{\Pic})\right) / G_{\Pic}\right]})$.  
Equivalently, if $\sigma\in\Sigma_{\max}$, 
then for $\check{x}_{\sigma^c}$ in \Cref{lem:checkxsigma}, 
there is an equality 
$(S_{\Pic})_\sigma \ce S_{\Pic}[\check{x}_{\sigma^c}^{-1}]_0 = \cO_X(U_\sigma)$.
Thus for a finitely generated graded $S_{\Pic}$-module $M$, 
the associated coherent sheaf $M|_X$
is defined by $M|_X(U_\sigma) \ce M[\check{x}_{\sigma^c}^{-1}]_0$.
\end{defn}

\begin{cor} 
\label{cor:CoxQcoh} 
Let $X$ be a semiprojective toric variety, not necessarily simplicial. 
\begin{enumerate}[noitemsep]
\item 
For ideals $I, I'$ in $S_{\Pic}$, $I|_{X} = I'|_{X}$ if and only if $I\colon B_{\Pic}^\infty = I'\colon B_{\Pic}^\infty$.
In particular, closed subschemes of $X$ are in bijection with $G_{\Pic}$-invariant closed subschemes of $\Spec(S_{\Pic}) - V(B_{\Pic})$ and $B_{\Pic}$-saturated homogeneous ideals $I \subseteq S_{\Pic}$.
\item 
For a $\Pic(X)$-graded $S_{\Pic}$-module $M$,  $M|_{X}=0$ if and only if $B_{\Pic}^\ell M=0$ for some $\ell$.
\end{enumerate}
\end{cor}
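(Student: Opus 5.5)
The plan is to derive \Cref{cor:CoxQcoh} from \Cref{cor:stackyNullstellensatz} together with the isomorphism of \Cref{thm:Pic-iso}. We apply \Cref{cor:stackyNullstellensatz} with $R=S_{\Pic}$, $J=B_{\Pic}$, $H=G_{\Pic}$, whose character group is $\widehat{G_{\Pic}}=\Pic(X)$ because $\Pic(X)$ is finitely generated. By \Cref{thm:Pic-iso}, $\psi\colon[(\Spec(S_{\Pic})-V(B_{\Pic}))/G_{\Pic}]\to X$ is an isomorphism of stacks. Pushforward and pullback along $\psi$ are therefore mutually inverse equivalences on quasi-coherent sheaves, and these equivalences preserve ideal sheaves and closed subschemes. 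By \Cref{def:barNot-Pic}, $M|_X=\psi_*(M|_{[Y/G_{\Pic}]})$ for $Y=\Spec(S_{\Pic})-V(B_{\Pic})$. Every statement about $M|_X$ and $I|_X$ is thus equivalent to the same statement on the quotient stack.

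For (1), recall that \Cref{cor:stackyNullstellensatz}(1) says $I|_{[Y/G_{\Pic}]}=I'|_{[Y/G_{\Pic}]}$ if and only if $I:B_{\Pic}^\infty=I':B_{\Pic}^\infty$. Applying $\psi_*$ gives the first claim. One should check that the ideals are homogeneous, since only homogeneous ideals define equivariant sheaves. Closed subschemes of $X$ correspond under $\psi$ to closed substacks of $[Y/G_{\Pic}]$. Closed substacks of $[Y/G_{\Pic}]$ are the same as $G_{\Pic}$-invariant closed subschemes of $Y$, by the equivalence between sheaves on $[Y/H]$ and $H$-equivariant sheaves on $Y$ in \Cref{prop:equivSheaves-gradedMods-final}. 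By the last sentence of \Cref{cor:stackyNullstellensatz}(1), these are in turn in bijection with $B_{\Pic}$-saturated homogeneous ideals. Saturation is idempotent and preserves homogeneity, so each class contains exactly one saturated representative, and the bijection is well defined.

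For (2), \Cref{prop:equivSheaves-gradedMods-final}(2) identifies $\QCoh([Y/G_{\Pic}])$ with $\Mod_{\Pic}(S_{\Pic})/\Mod_{\Pic,B_{\Pic}}(S_{\Pic})$. Hence $M|_X=0$ if and only if every element of $M$ is annihilated by a power of $B_{\Pic}$. If $M$ is finitely generated, we pass from elementwise annihilation to $B_{\Pic}^\ell M=0$ by taking the maximum exponent over a finite generating set, exactly as in \Cref{cor:stackyNullstellensatz}(2).

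The main obstacle is the case where $M$ is not finitely generated. There the uniform statement $B_{\Pic}^\ell M=0$ can fail, and only the elementwise torsion condition holds. I would either interpret (2) for finitely generated $M$, matching \Cref{def:barNot-Pic}, or state the general case in Cox's elementwise form: every homogeneous $m\in M$ is killed by some power of $B_{\Pic}$. This elementwise form is precisely Cox's condition in \cite{cox:homog}*{Theorem 3.7} after restricting to Picard degrees.
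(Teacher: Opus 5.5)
Your proposal is correct and follows the paper's route: the paper's proof simply cites \Cref{thm:Pic-iso} and \Cref{cor:stackyNullstellensatz}, and you spell this out with $R=S_{\Pic}$, $J=B_{\Pic}$, $H=G_{\Pic}$, transported along the isomorphism $\psi$. Your caveats are legitimate sharpenings of the statement rather than departures from the argument: $I,I'$ must be homogeneous, and in (2) the uniform bound $B_{\Pic}^\ell M=0$ requires $M$ to be finitely generated, as is implicit in \Cref{def:barNot-Pic}.
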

\begin{proof}
This follows  from \Cref{thm:Pic-iso} and \Cref{cor:stackyNullstellensatz}.    
\end{proof}
\begin{remark}\label{rem:noKuwagaki}
Consider the diagram
\begin{equation} \label{eq:stackDiagram}
\begin{tikzcd} 
   {[\Spec(\checkS) - V(\checkB) / \checkG]} \ar[r, "f"] \ar[d, "\pi"]&  {[\Spec(S_{\Pic}) - V(B_{\Pic}) / G_{\Pic}]} \ar[dl, "\psi"]\\
   X
\end{tikzcd}
\end{equation}
where $f^\sharp\colon S_{\Pic}\hookrightarrow \checkS$ is the inclusion and $\psi$ is the isomorphism of stacks from \Cref{thm:Pic-iso}. This gives a more direct approach to proving that $[\Spec(\checkS) - V(\checkB) / \checkG]$ is a DM stack in \Cref{thm:coarseModuli} that does not rely on Kuwagaki's work. 
Observe in \eqref{eq:stackDiagram} that $f$ is a finite map (since it is induced by a finite map of algebras and a quotient map $\checkG \to G_{\Pic}$ with finite cokernel) and $\psi$ is an isomorphism by \Cref{thm:Pic-iso}.  Hence $[\Spec(\checkS) - V(\checkB) / \checkG]$ is finite over the algebraic variety $X$ and therefore is a DM stack.
\end{remark}

\begin{remark} \label{rem:veronese}
The map $f$ in \eqref{eq:stackDiagram} can be thought of as a multigraded analog of the Veronese embedding, 
see also \cite{book2015-coxRings}*{Construction 4.2.1.2 and Example 4.2.1.9}. 
\end{remark}

\section{A functorial stacky GKZ theory}
\label{sec:GKZ}

This section contains the primary application of our main results. 
GKZ theory studies a fan, denoted $\Sigma_{GKZ}$,  which parametrizes different GIT quotients of $\Spec(S)$.  Good references for this material include \cite{GKZbook} and \cite{CLSToricVarieties}*{Chapters~14-15}.  

GKZ theory functorially associates a toric GIT quotient to each cone $\Gamma \in \Sigma_{GKZ}$.  However, if one naively passes to the associated canonical stacks, even in the simplicial case, functoriality is lost. 

\begin{example}\label{ex:P113-H3-GKZcan}
The GKZ fan with the Hirzebruch surface $\cH_r$ as a chamber has one additional chamber corresponding to the smooth toric DM stack $\PP(1, 1, r)$, as pictured in Figure~\ref{fig:P113-GKZfan-var}.  
In this case, $\cH_r = \cH_{r,\mathrm{can}}$, but $\PP_v(1, 1, r)_\mathrm{can}$ is the stack $\PP(1, 1, r)$.  These two toric surfaces are related by a divisorial contraction, specifically one contracts the $-r$-curve on the Hirzebruch surface to a point.  On the other hand, this birational map does not lift to morphism of canonical stacks $\cH_3 = \cH_{3,\mathrm{can}} \to \PP(1, 1, 3) = \PP_v(1, 1, 3)_\mathrm{can}$.

\begin{figure}[htpb]
    \centering
    
    \begin{minipage}[c]{0.42\textwidth}
        \centering
        \begin{tikzpicture}[scale=0.8, baseline=(current bounding box.center)]
            \draw[step=1cm, gray!40, very thin] (0,2) grid (7,7);
            
            \fill[gray!15] (4,3) -- (0,13/3) -- (0,7) -- (4,7) -- cycle;
            \fill[gray!50] (4,3) -- (4,7) -- (7,7) -- (7,3) -- cycle;
            
            \draw[thick, black!60] (0,13/3) -- (0,7) -- (7,7) -- (7,3);
            
            \draw[->, blue!80!black, ultra thick, dotted] (4,3) -- (0,13/3); 
            \draw[->, blue!80!black, ultra thick] (4,3) -- (4,7);             
            \draw[->, blue!80!black, ultra thick, dashed] (4,3) -- (7,3);      
            
            \fill[black!60] (4,3) circle (4pt);
            
            \node at (2.2, 5.3) {${\mathsf{\PP}(1, 1, 3)}$};
            \node at (5.5, 5.3) {${\mathsf{\cH_3}}$};
        \end{tikzpicture}
    \end{minipage}%
    \hfill
    \begin{minipage}[c]{0.58\textwidth}
        \centering
        \begin{tikzcd}[row sep=3em, column sep={4em,between origins}]
                   & |[fill=gray!15, rounded corners, inner sep=4pt]| \PP(1, 1, 3) \arrow[dl] \arrow[dr] & & 
              |[fill=gray!50, rounded corners, inner sep=4pt]| \cH_3  
              \arrow[dl, red!80!black, line width=2.2pt, -{Latex[length=6pt, width=5pt]}, "\textbf{\textsf{X}}"{description, scale=1.6}]
              \arrow[dr] & \\
            |[draw=blue!80!black, ultra thick, dotted, rounded corners, inner sep=4pt]| \Spec(\CC) \arrow[drr] & & 
              |[draw=blue!80!black, ultra thick, rounded corners, inner sep=4pt]| \PP(1, 1, 3) \arrow[d] & & 
              |[draw=blue!80!black, ultra thick, dashed, rounded corners, inner sep=4pt]| \PP^1 \arrow[dll] \\
            & & |[fill=black!60, text=white, rounded corners, inner sep=4pt]| \Spec(\CC) & &
        \end{tikzcd}
    \end{minipage}
\caption{An illustration of the failure of functoriality for canonical stacks.}
\label{fig:P113-GKZfan-var}
\end{figure}

Indeed, this is impossible since under the map $\cH_3\to\PP_v(1,1,3)$, the line bundle $\cO_{\PP_v(1,1,3)}(r)$ pulls back to $\cO_{\cH_3}(H+rF)$, the hyperplane class $H$ plus $r$ times the fiber $F$. However, on the stack $\PP(1,1,3)$, $\cO_{\PP(1,1,3)}(1)$ is a line bundle. 
Hence, if the map is lifted to canonical stacks, then the pullback of $\cO_{\PP(1,1,3)}(1)$ would be $\frac{1}{r}H + F$, which is not an integral class, yielding a contradiction.  Alternatively, this can been seen directly in terms of the map not lifting to a morphism of stacky data.  This results in the failure of functoriality across $\Sigma_{GKZ}$ for the Hirzebruch surface $\cH_r$ pictured on the right hand side of Figure~\ref{fig:P113-GKZfan-var}. 
\end{example} 
  
Our entry point for GKZ theory starts with a diagonal action of an algebraic torus $G$ on an affine space $\AA^n$.  
This provides a short exact sequence 
$0\to G\to(\CC^*)^n\to(\CC^*)^n/G\to 0$.
Applying the exact functor $\Hom_\ZZ(-,\CC^*)$ yields 
\[
0\to M\xrightarrow{\beta^\vee} \ZZ^n\xrightarrow{\mu}\widehat{G}\ce \Hom_\ZZ(G,\CC^*)\to 0.
\]
Dualizing $\ZZ$ gives the exact sequence 
$\widehat{G}^\vee\to\ZZ^n\xrightarrow{\beta} N\to 0$,
and as motivating language, we call the set $\{\beta(e_1),\beta(e_2),\dots,\beta(e_n)\}$ the ``rays" of the GIT problem, even though these vectors might not come from a toric variety. 

To this data, there is an associated GKZ fan $\Sigma_{GKZ}$ that lies in vector space associate to the character lattice $\wG_\RR\ce\Hom_\ZZ(G,\CC^*)\otimes_\ZZ\RR$ of $G$, as defined by Gelfand--Kapranov--Zelevinsky \cite{GKZbook}, see 
\cite{CLSToricVarieties}*{Definition 14.4.2}. 
We now associate to each cone $\Gamma$ in $\Sigma_{GKZ}$ a toric stack $\sXGamma$, and for each containment $\Gamma' \subseteq \Gamma$, a morphism of toric stacks $\sX_{\Gamma'} \to \sXGamma$ which is functorial.  
When $\Gamma$ is in the moving cone, $\sXGamma$ will be the $\QQ$-Cartier stack of $X_\Gamma$, but in general, it will be something in between the two.  
Furthermore, this functoriality extends to the common stacky refinement of \cite{ballard2024king}.

Fix a cone $\Gamma = (\overline \Sigma_\Gamma, I_\Gamma) \in \Sigma_{GKZ}$.\footnote{Here we use $I_\Gamma$ instead of the $I_\emptyset$ used in \cite{CLSToricVarieties} to distinguish between these sets as they vary in $\Sigma_{GKZ}$.} 
Recall that $L_\Gamma$ denotes the lineality subspace of $\overline{\Sigma}_\Gamma$,
$N_\Gamma \ce N/(N \cap L_\Gamma)$ is a quotient lattice of $N$,
$\Sigma_\Gamma \ce \{\sigma/L_\Gamma \mid \sigma \in \overline{\Sigma}_\Gamma \}$ is the projection of $\overline{\Sigma}_\Gamma$ to $N_\Gamma$,
and $X_\Gamma \ce X_{\Sigma_\Gamma}$ is the toric variety associated to $\Sigma_\Gamma$.
Let $f_\Gamma \colon N \to N_\Gamma$
denote the natural projection.

Also of interest is the set of weights not appearing in either $\Sigma_{\Gamma}(1)$ or $I_\Gamma$, that is, the set 
\[
J_{\Gamma} \ce \{1, ..., n\} \setminus \left( \Sigma_{\Gamma}(1) \cup I_\Gamma(1)\right), 
\]
so that $I_\Gamma^c = \Sigma_\Gamma(1)\cup J_{\Gamma}$.  
In \eqref{eqn:defnL}, a sublattice $\widetilde{L}\subseteq \ZZ^{\Sigma(1)}$ was defined that encoded the nonsimplicial structure of $\Sigma$.  
We now define an analogous $\widetilde{L}_{\Gamma} \subseteq \ZZ^n$ that encodes not only the nonsimplicial structure of $\Sigma_{\Gamma}$, but also properties related to $I_\Gamma$: 
\small
\[
\tL_\Gamma \ce 
\text{lattice saturation of } \left\<
\sum_{
\rho\in \sigma_\Gamma(1) \cup J_\Gamma} a_\rho e_\rho \in \ZZ^n
\,\,\,\Bigg\vert\,\,\,
\sum a_\rho f_\Gamma(u_\rho) = 0, \, a_\rho \in \QQ, \, \sigma_\Gamma \in \Sigma_\Gamma
\right\>
\subseteq \ZZ^n. 
\]
\normalsize
Further, set
$\tN_\Gamma \ce \ZZ^n/\tL_\Gamma$ 
with natural quotient map
\begin{align*}
\tf_\Gamma\colon \ZZ^n & \to N \\
e_\rho & \mapsto u_\rho,
\end{align*}
 so that $\beta_\Gamma$ is the induced map on quotients in the following commutative diagram: 
\begin{equation}\label{eq:tildefGamma}
\begin{tikzcd}
\ZZ^n \ar[r, "{\beta}"] \ar[d, "{\tf_\Gamma}"] & N \ar[d, "{f_\Gamma}"] \\
\tN_\Gamma \ar[r, "{\beta_\Gamma}"] & N_\Gamma.
\end{tikzcd}
\end{equation}
Given a cone $\sigma_\Gamma \in \Sigma_\Gamma$, let $\tsigma_\Gamma$ denote the cone in $(\tN_\Gamma)_\R$ generated by the $\tf_\Gamma(e_\rho)$ for $\rho\in\sigma_\Gamma(1)$.

\begin{lemma}
The map $\beta_\Gamma$ induces an isomorphism of cones
$\tsigma_\Gamma \xra{\cong} \sigma_\Gamma$.
\end{lemma}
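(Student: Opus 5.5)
The plan is to check the two properties of a linear isomorphism of cones, surjectivity and injectivity, for the restriction of $(\beta_\Gamma)_\RR$ to $\tsigma_\Gamma$. Surjectivity is immediate. By \eqref{eq:tildefGamma}, $\beta_\Gamma(\tf_\Gamma(e_\rho)) = f_\Gamma(u_\rho)$. For $\rho\in\sigma_\Gamma(1)$ these images generate $\sigma_\Gamma$, since $\Sigma_\Gamma$ is the projection of $\overline\Sigma_\Gamma$ and the rays of $\sigma_\Gamma$ are the images $f_\Gamma(u_\rho)$. Hence $(\beta_\Gamma)_\RR(\tsigma_\Gamma)=\sigma_\Gamma$.

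For injectivity, I would show that $(\beta_\Gamma)_\RR$ is injective on the linear span $\Span(\tsigma_\Gamma)$. That span is the image in $(\tN_\Gamma)_\RR = \RR^n/(\tL_\Gamma)_\RR$ of $\RR^{\sigma_\Gamma(1)}\subseteq\RR^n$. Take $v=\sum_{\rho\in\sigma_\Gamma(1)} c_\rho e_\rho$ with $c_\rho\in\RR$ and $\sum c_\rho f_\Gamma(u_\rho)=0$; I need $v\in(\tL_\Gamma)_\RR$. The relation space $\{c\in\RR^{\sigma_\Gamma(1)}\mid \sum c_\rho f_\Gamma(u_\rho)=0\}$ is the kernel of a rational linear map, so it has a basis of rational vectors. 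Each such rational vector is one of the defining generators of $\tL_\Gamma$: its support lies in $\sigma_\Gamma(1)\subseteq\sigma_\Gamma(1)\cup J_\Gamma$, and after clearing denominators it lies in $\tL_\Gamma$, since $\tL_\Gamma$ is saturated and even allows $a_\rho\in\QQ$. So $v\in(\tL_\Gamma)_\RR$ and its image in $(\tN_\Gamma)_\RR$ is zero. This gives an injective linear map $\Span(\tsigma_\Gamma)\to (N_\Gamma)_\RR$ carrying $\tsigma_\Gamma$ onto $\sigma_\Gamma$, which is an isomorphism of cones.

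The main subtlety, and the step I expect to be the real obstacle, is making sure $\tsigma_\Gamma$ is a genuine cone whose span is correctly identified. Here the $J_\Gamma$ generators matter: they may identify some $\tf_\Gamma(e_\rho)$ for $\rho\in J_\Gamma$ with elements of spans of cones, but they impose no relations on $\RR^{\sigma_\Gamma(1)}$ beyond those already in $\ker f_\Gamma$. To confirm this, I would verify $(\tL_\Gamma)_\RR\cap\RR^{\sigma_\Gamma(1)}$ directly. Any element of $(\tL_\Gamma)_\RR$ is a sum of relations $r_i$, each supported on some $\tau_i(1)\cup J_\Gamma$ with $\sum_\rho (r_i)_\rho f_\Gamma(u_\rho)=0$. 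Since the condition $\sum a_\rho f_\Gamma(u_\rho)=0$ is linear, it holds for every element of $(\tL_\Gamma)_\RR$. Hence every element of $(\tL_\Gamma)_\RR\cap\RR^{\sigma_\Gamma(1)}$ lies in the kernel considered above, so the kernel of the linear map $\RR^{\sigma_\Gamma(1)}\to(N_\Gamma)_\RR$ equals $(\tL_\Gamma)_\RR\cap\RR^{\sigma_\Gamma(1)}$. The restricted map on the span is therefore injective, and the composite $\RR^{\sigma_\Gamma(1)}\to\Span(\tsigma_\Gamma)\to(N_\Gamma)_\RR$ has exactly the expected kernel, which completes the argument.
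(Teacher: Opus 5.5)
Your proof is correct and is essentially the paper's argument. The paper writes down the inverse $\sum a_\rho u_\rho \mapsto \sum a_\rho \tf_\Gamma(e_\rho)$ and notes that it is well defined because every relation among the rays of $\sigma_\Gamma$ lies in $\tL_\Gamma$; that is exactly the containment you use to get injectivity on $\Span(\tsigma_\Gamma)$, and you additionally make the rational-basis step explicit (the reverse inclusion in your last paragraph is only needed for $\beta_\Gamma$ to be well defined, not for injectivity).
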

\begin{proof}
 The map $\beta_\Gamma$ has an inverse defined by $\sum a_\rho u_\rho \mapsto \sum a_\rho \tf_\Gamma(e_\rho)$.  The well-definedness of this map follows precisely from the definition of $\tN_\Gamma$.
 \end{proof}

The lemma allows us to define a fan
$
\tSigma_\Gamma \ce \{ \tsigma_\Gamma \mid \sigma_\Gamma \in \Sigma_\Gamma \}
$, 
as faces of such cones are now of this type as well.
\begin{defn}
\label{defn:goodStacky}
Let $\sXGamma$ be the toric stack defined via the toric stack datum
$
(\tN_\Gamma, N_\Gamma, \tSigma_\Gamma,  \beta_\Gamma).
$
\end{defn}

\begin{wrapfigure}[21]{r}{0.55\textwidth}
\raggedleft
\vspace*{-1mm}
\begin{center}
\hspace*{-2.5mm}
\begin{tikzpicture}
\draw[<->] (0,-2) -- (0,2);
\draw[->] (0,0) -- (-3,0);
\draw[->] (0,0) -- (2,2);
\draw[->] (0,0) -- (2,-2);
\node at (-3.5,0) {2};
\node at (0,-2.5) {4};
\node at (0,2.5) {1};
\node at (2.25,2.25) {3};
\node at (2.25,-2.25) {0};

\draw  (-2,-1) rectangle (-1,-2.5);
\node[black] at (-1.5,-2.75) {\tiny $I_\Gamma=\{2\}$};
\draw (-2,-1) -- (-1,-2.5);
\node at (-3,-2.5) {$\Sigma_{IV}$};
\draw  (-2,2) rectangle (-1,0.5);
\node[black] at (-1.5,0.25) {\tiny $I_\Gamma=\{2\}$};
\draw (-1,2) -- (-2,0.5);
\node at (-1.5,3) {$\Sigma_{III}$};

\draw  (0.5,3.5) rectangle (1.5,2);
\draw (0.5,2.75) -- (1.5,3.5) -- (0.5,2);
\node at (1.5,4) {$\Sigma_{II}$};

\draw  (2,1) rectangle (3,-0.5);
\draw (3,1) -- (2,0.25) -- (3,-0.5);
\node at (3.5,0.25) {$\Sigma_I$};

\draw  (1,-2.5) rectangle (2,-4);
\draw (1,-2.5) -- (2,-4) -- (1,-3.25);
\node at (2.5,-3.5) {$\Sigma_V$};
\draw[blue]  (-4.2,0.6) rectangle (-3.8,-0.2);
\node[blue] at (-4,-0.5) {\tiny $I_\Gamma=\{2\}$};
\draw[blue]  (-0.2,3.6) rectangle (0.2,2.8);
\draw[blue] (-0.2,2.8) -- (0.2,3.6);

\draw[blue] (2.6,3) rectangle (3,2.2);
\draw[blue] (2.6,2.6) -- (3,3);

\draw[blue]  (2.6,-1.6) rectangle (3,-2.4);
\draw[blue] (2.6,-2) -- (3,-2.4);

\draw[blue]  (-0.2,-2.8) rectangle (0.2,-3.6);
\draw[blue]  (-0.2,-2.8) -- (0.2,-3.6);
\end{tikzpicture}
\end{center}
\captionsetup{width=0.9\linewidth, justification=raggedleft}
\caption{For \cref{ex:5points}, $\Sigma_{GKZ}$ with $I_\Gamma=\varnothing$ unless stated otherwise.}
\label{fig:5pts-GKZcone}
\end{wrapfigure}

The next example illustrates that, in general, the stack $\sXGamma$ is something in between the Cox construction and the variety.  In this example, the Cox construction would not provide a combinatorial equivalence.

\begin{example}
\label{ex:5points}
In $M=\ZZ^3$, consider the rays which are the rows of the following matrix:
\[
\phi \ce \left[\begin{smallmatrix}1&0&0\\1&1&0\\1&0&1\\1&0&2\\1&1&2\end{smallmatrix}\right],
\hspace*{9.2cm}
\]
 so that
\[
0\to M\xrightarrow{\phi}\ZZ^5\xrightarrow{
    \left[\begin{smallmatrix} \phantom{-}1 & 0 & -2 & 1 & \phantom{-}0\\
    -1 & 1 & \phantom{-}0 & 1 & -1 \end{smallmatrix}\right]
}\ZZ^2\to 0
\hspace*{9.7cm}
\]
is the fundamental short exact sequence for a toric variety with such rays.
Let $\Gamma$ denote the cone in the GKZ fan that is a wall between the cones for $\Sigma_{I}$ and $\Sigma_{II}$, so that $\Sigma_\Gamma$ is as pictured in Figure~\ref{fig:5pts-GKZcone}.
In this case, $\beta\colon \ZZ^5\xrightarrow{\phi^T} N=\ZZ^3$,  $\tf_{I}=\tf_{II}=\id_{\ZZ^5}$, and $f_{I}=f_{II}=\id_{\ZZ^3}$.

\smallskip
Let $\tSigma$ denote the Cox fan lift of $\Sigma_\Gamma$ to $\RR^5$.
Here, $\tL_\Gamma$ is the span of one relation, given by $2e_0-e_1-2e_2+e_4$, which comes from the relation among the rays of the cone $\sigma$ of $\Sigma_\Gamma$ spanned by the four rays corresponding to rows 0, 1, 2, and 4 of $\phi$.
With the natural quotient by $\tL_\Gamma$ denoted by $\tf_\Gamma$, there is a commutative diagram
\[
\begin{tikzcd}[ampersand replacement=\&, column sep=3.5em]
\ZZ^5 \ar[rr, "{\beta=\phi^T = \left[\begin{smallmatrix} 1&1&1&1&1 \\0&1&0&0&1\\ 0&0&1&2&2 \end{smallmatrix}\right]}" {yshift = .5mm}] \ar[dd, swap, "{\tf_\Gamma = \left[\begin{smallmatrix} 1&0&0&0&-2 \\0&1&0&0&1\\ 0&0&1&0&2\\ 0&0&0&1&0 \end{smallmatrix}\right]}" {xshift=-1mm}] \& \& N \ar[dd, "{f_\Gamma}=\id_{\ZZ^3}" {yshift = 1mm}]
\\
\\
\tN_\Gamma \ar[rr,  "{ \beta_\Gamma = \left[\begin{smallmatrix} 1&1&1&1 \\0&1&0&0\\ 0&0&1&2 \end{smallmatrix}\right]}"] \& \& N_\Gamma.
\end{tikzcd}
\]
Note that $\beta_\Gamma\colon \tN_\Gamma\to N_\Gamma$ induces the combinatorial equivalence between $\tSigma_\Gamma$ and $\Sigma_\Gamma$.
\end{example}

As in \cite{CLSToricVarieties}*{(14.4.12)}, set 
$B(\Sigma_\Gamma,I_\Gamma) \ce \left\< \prod_{\rho\in\sigma(1)^c\cup I_\Gamma} x_\rho\ \Big\vert\ \sigma\in(\Sigma_\Gamma)_{\max} \right\>$, 
and define 
\[
S_\Gamma \ce \CC\left[(\mu_\RR^{-1}(\Span\Gamma))\cap\NN^{n}\right], 
\quad 
B_\Gamma \ce B(\Sigma_\Gamma,I_\Gamma)\cap S_\Gamma, 
\quad 
\text{and}
\quad 
G_\Gamma \ce \ker (\beta_\Gamma \otimes_{\ZZ} \CC^*).
\]

\begin{theorem} \label{thm:quotientRealization}
There is an isomorphism of toric stacks: 
\[
\sXGamma \cong [\Spec(S_\Gamma) - V(B_\Gamma)/G_\Gamma].
\]
\end{theorem}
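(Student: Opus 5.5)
The plan is to mimic the proof of \Cref{prop:canonicalDMstack-toric}. That proof rested on two facts: \Cref{lem:affineComparison}, which identifies the toric lattice of $\Spec(\checkS)$ with $\tN$ and its character group with $\QCart(X)$, and \Cref{lem:quasiaffineComparison}, which identifies the fan of $\Spec(\checkS)-V(\checkB)$ with $\tSigma$. Here I would prove the $\Gamma$-versions of both and then compare groups. Concretely, the stack $\sXGamma$ is by definition $[X_{\tSigma_\Gamma}/G_{\beta_\Gamma}]$ with $G_{\beta_\Gamma}=\ker\beta_\Gamma\otimes\CC^*=G_\Gamma$. It therefore suffices to exhibit a $G_\Gamma$-equivariant isomorphism of toric varieties $X_{\tSigma_\Gamma}\cong \Spec(S_\Gamma)-V(B_\Gamma)$.

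\textbf{Step 1 (lattice duality).} Show that the dual lattice $\tN_\Gamma^\vee=\tL_\Gamma^\perp\subseteq\ZZ^n$ equals $\mu^{-1}(\Span\Gamma)\cap\ZZ^n$. Since $\tL_\Gamma$ is saturated, $\tN_\Gamma^\vee$ is exactly the set of integer vectors $\bba$ with $\bba\cdot\bbv=0$ for all $\bbv\in\tL_\Gamma$. I would argue as in \Cref{lem:affineComparison}: such $\bba$ correspond to rational support functions on $\Sigma_\Gamma$, integral on the rays, extended by arbitrary values on the coordinates $\rho\in I_\Gamma$.

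The key input is what orthogonality to the generators of $\tL_\Gamma$ forces. On coordinates in $\sigma_\Gamma(1)$ it imposes linearity on each cone. The coordinates in $J_\Gamma$ appear in the relations together with $\sigma_\Gamma(1)$: each $u_\rho$ with $\rho\in J_\Gamma$ lies in some cone of $\overline\Sigma_\Gamma$ (as it is not a ray and not in $I_\Gamma$), so $f_\Gamma(u_\rho)$ lies in some $\sigma_\Gamma$, and orthogonality forces $a_\rho=\varphi(f_\Gamma(u_\rho))$.

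I would then invoke the description of $\Gamma$ in \cite{CLSToricVarieties}*{Chapter~14}. The classes $\mu(\bba)$ with $\bba$ of this form, namely a support function on $\overline\Sigma_\Gamma$ (linear on $L_\Gamma$, hence descending to $N_\Gamma$) with arbitrary $I_\Gamma$-values, span $\Span\Gamma$. Conversely, $\mu^{-1}(\Span\Gamma)$ consists of exactly such vectors. It follows that the affine toric variety of the cone $\RR_{\ge0}\{\tf_\Gamma(e_\rho)\}$ in $\tN_\Gamma$ is $\Spec(S_\Gamma)$, and that the character group of $G_\Gamma$ matches the grading group $\mu(\mu^{-1}(\Span\Gamma)\cap\ZZ^n)$ of $S_\Gamma$, so the torus actions agree.

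\textbf{Step 2 (fans).} Following \Cref{lem:faceBijection} and \Cref{lem:quasiaffineComparison}, faces of this cone correspond to index sets $I\subseteq\{1,\dots,n\}$ supporting a positive element of $\mu^{-1}(\Span\Gamma)$. Removing $V(B_\Gamma)$ keeps exactly the faces whose complement contains some $\sigma(1)^c\cup I_\Gamma$. Note that every $x_\rho$ with $\rho\in J_\Gamma$ is invertible on every chart, because $\rho\notin\sigma(1)$ for any cone. The surviving faces are therefore those spanned by $\{\tf_\Gamma(e_\rho):\rho\in\sigma_\Gamma(1)\}$, which is precisely $\tSigma_\Gamma$. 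The rays indexed by $I_\Gamma\cup J_\Gamma$ are not generators of $\tSigma_\Gamma$; they only enlarge the ambient lattice, which matches $\tN_\Gamma=\ZZ^n/\tL_\Gamma$ carrying coordinates for all $n$ weights.

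\textbf{Main obstacle.} The main obstacle will be Step 1, specifically verifying that $\mu^{-1}(\Span\Gamma)$ is cut out exactly by orthogonality to $\tL_\Gamma$. For this I would first treat the case where $\Gamma$ is full-dimensional. Then $\Span\Gamma=\wG_\RR$ and I must show $\tL_\Gamma=0$ rationally, i.e.\ $\Sigma_\Gamma$ is simplicial and $J_\Gamma=\varnothing$. I would then reduce the general case to it by passing to $N_\Gamma$ and using the description in \cite{CLSToricVarieties}*{Proposition 14.4.9} of $\Gamma$ as the set of classes of support functions for $(\overline\Sigma_\Gamma,I_\Gamma)$.
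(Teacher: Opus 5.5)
This is the same route as the paper's proof. You reduce to an equality $X_{\tSigma_\Gamma}=\Spec(S_\Gamma)-V(B_\Gamma)$ of $T_{\tN_\Gamma}$-toric varieties, rerun \Cref{lem:affineComparison} with $\tL_\Gamma$ in place of $\tL$, and then match charts of $B_\Gamma$ with cones of $\tSigma_\Gamma$.

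\textbf{Step~2 contains a false claim.} It is not true that $x_\rho$ is inverted on every chart for $\rho\in J_\Gamma$.
\begin{itemize}
\item As your Step~1 says, $f_\Gamma(u_\rho)$ lies in some cone $\sigma_\Gamma$. The corresponding relation in $\tL_\Gamma$ then puts $\tf_\Gamma(e_\rho)$ \emph{inside} $\tsigma_\Gamma$.
\item A face avoiding $\tf_\Gamma(e_\rho)$ cannot contain $\tsigma_\Gamma$. So inverting $x_\rho$ everywhere deletes exactly the cones you say survive; your conclusion contradicts your premise.
\item Concretely, take $\cH_r$ with $u_1=(-1,r)$, $u_2=(0,1)$, $u_3=(1,0)$, $u_4=(0,-1)$. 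At the wall giving $\PP_v(1,1,r)$ one has $J_\Gamma=\{2\}$, $ru_2=u_1+u_3$, and $S_\Gamma=\CC[x_4,\,x_2x_1^ix_3^{r-i}\ (0\le i\le r)]$.
\item If every generator of $B(\Sigma_\Gamma,I_\Gamma)$ contains $x_2$, then $V(B_\Gamma)$ is the $x_4$-axis. This is the orbit closure of $\RR_{\ge0}\{\tf_\Gamma(e_1),\tf_\Gamma(e_3)\}\ni\tf_\Gamma(e_2)$, and the quotient is $\PP_v(1,1,r)$ minus its singular point.
\item At the ray giving $\PP^1$ (where $J_\Gamma=\{2,4\}$ and $S_\Gamma=\CC[x_1,x_3]$) you would even get $B_\Gamma=0$.
\end{itemize}
So $\sigma(1)^c$ must mean $\Sigma_\Gamma(1)\setminus\sigma(1)$; the ``$\cup I_\Gamma$'' would otherwise be redundant. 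With that reading, after intersecting with $S_\Gamma$, a $J_\Gamma$-variable is inverted on the chart of $\sigma_\Gamma$ exactly when $f_\Gamma(u_\rho)\notin\sigma_\Gamma$. What you must show is this: a monomial of $S_\Gamma$ that vanishes on $\sigma_\Gamma(1)$ and is positive on the other rays and on $I_\Gamma$ (one exists by strict convexity) cuts out exactly $\tsigma_\Gamma$. Relatedly, the $J_\Gamma$-directions do not enlarge the ambient lattice's rank. Modulo $\tL_\Gamma$ they lie rationally in the span of the cone generators and at most refine the lattice. Only the $I_\Gamma$-coordinates, which $\tL_\Gamma$ never touches, are new directions.

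\textbf{Step~1 also needs repair.}
\begin{itemize}
\item The lattice identity $\tL_\Gamma^\perp=\mu^{-1}(\Span\Gamma)\cap\ZZ^n$ fails when $L_\Gamma\neq0$. If $u_\rho\in L_\Gamma$ then $f_\Gamma(u_\rho)=0$, so $e_\rho\in\tL_\Gamma$. Hence $\tL_\Gamma^\perp$ only contains support functions that \emph{vanish} on $L_\Gamma$; being linear there, as you wrote, does not make them descend to $N_\Gamma$.
\item At the $\PP^1$ ray, $\tL_\Gamma^\perp=\{a_2=a_4=0\}$ while $\mu^{-1}(\Span\Gamma)\cap\ZZ^4=\{a_2+a_4=0\}$.
\item What is true, and suffices for $\Spec(S_\Gamma)$, is the semigroup equality $\NN^n\cap\tL_\Gamma^\perp=\NN^n\cap\mu^{-1}(\Span\Gamma)$. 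It holds because the $u_\rho\in L_\Gamma$ with $\rho\notin I_\Gamma$ positively span $L_\Gamma$, so nonnegativity kills those coordinates.
\item You need not match the characters of $G_\Gamma$ with $\mu(\mu^{-1}(\Span\Gamma)\cap\ZZ^n)$, since $G_\Gamma$ acts through $T_{\tN_\Gamma}$ on both sides.
\item Your orthogonality computation tacitly lets $\rho\in J_\Gamma$ enter only the relations of cones that \emph{contain} $f_\Gamma(u_\rho)$. Make this reading of $\tL_\Gamma$ explicit. Read literally, at the $\PP_v(1,1,r)$ wall $e_2+e_4\in\tL_\Gamma$ (from the cone on $u_3,u_4$, since $u_2=-u_4$), which forces $\NN^4\cap\tL_\Gamma^\perp=0$.
\end{itemize}
None of this shows up in the chamber case you propose to start from, which is trivial ($\tL_\Gamma=0$, $S_\Gamma=S$). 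All the content is on the walls, so you need an argument there directly rather than a reduction.
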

\begin{proof}
The proof will largely echo the proof \Cref{prop:canonicalDMstack-toric}.
First, recall that by definition 
$\sX_\Gamma \ce [X_{\widetilde \Sigma_\Gamma}/G_\Gamma]$, so it suffices to show that the toric variety $X_{\widetilde \Sigma_\Gamma}$ is equal to $ \Spec(S_\Gamma) - V(B_\Gamma)$. 

Now $(\mu^{-1}(\Span\Gamma))\cap \ZZ^n$
is the lattice saturation of the GKZ cone: 
\begin{equation} 
\label{eq:gammaSupportFunctions}
\{(a_1, ..., a_r)\in \RR^n \mid \exists \varphi \in \SF(\Sigma_\Gamma) \text{ with } \varphi(u_\rho) = -a_\rho \ \forall \rho \notin I_\Gamma,  \varphi(u_\rho) \geq -a_\rho \ \forall \rho \in I_\Gamma \}. 
\end{equation}
(See, for example,  \cite{CLSToricVarieties}*{Definition 14.4.2}, and notice they are taking the interior of $\Gamma$, which is why they consider only convex support functions.)  Taking the argument from the proof of \Cref{lem:affineComparison} verbatim with $\widetilde L$ replaced by $\widetilde L_\Gamma$, it follows that $\Spec (S_\Gamma) = X_{|\widetilde \Sigma_\Gamma|}$, regarding $|\widetilde \Sigma_\Gamma|$ as a single cone. 
Given $\sigma\in(\Sigma_\Gamma)_{\max}$, the vanishing locus of 
$\left\<\prod_{\rho\in\sigma(1)^c\cup I_\Gamma} x_\rho\right\>\cap S_\Gamma$
is the closure of the orbit for $|\tSigma_\Gamma|$. 
Thus $\Spec (S_\Gamma) - V(B_\Gamma)$ is equal to $ X_{\widetilde \Sigma_\Gamma}$. 
\end{proof}

\begin{thmC}
For each cone $\Gamma$ in $\Sigma_{GKZ}$,  there is a toric DM stack $\sXGamma$ with a coarse moduli space $X_\Gamma$ such that, for any subcone $\Gamma' \subseteq \Gamma$, there is a functorial morphism that commutes with the usual functoriality of GKZ theory:
\[
\begin{tikzcd}
\cX_{\Gamma} \ar[r] \ar[d] & \cX_{\Gamma'} \ar[d] \\
X_{\Gamma} \ar[r] & X_{\Gamma'}.
\end{tikzcd}
\]
That is, there is a diagram $\cF$ from the poset category of cones in $\Sigma_{GKZ}$  to DM stacks and a natural transformation of diagrams 
\[
\begin{tikzcd}
\Sigma_{\text{GKZ}} \ar[r, "{\cF}"] \ar[dr, "F"'] & \{\text{DM stacks}\} \ar[d, "\text{coarse moduli}"] \\
& \{\text{Varieties}\},
\end{tikzcd}
\]
where $F$ is the usual diagram in varieties.
Further, $\sX_{\Gamma}$ lies in between $X_{\Gamma}$ and the $\QQ$-Cartier stack $\sXGC$ of $X_{\Gamma}$ via finite maps
$\sXGC \to \sXGamma \to X_{\Gamma}$,  
and when $\Gamma$ lies within the moving cone, there is an equality 
$\sX_{\Gamma}=\sXGC$.  
\end{thmC}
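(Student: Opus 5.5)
The plan is to work entirely with toric stack data and reduce every claim to a statement about lattice maps and fans. Each $\sX_\Gamma$ is given by the datum $(\tN_\Gamma, N_\Gamma, \tSigma_\Gamma, \beta_\Gamma)$ of \Cref{defn:goodStacky}.

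\textbf{DM property and coarse moduli.} By the lemma preceding \Cref{defn:goodStacky}, $\beta_\Gamma$ restricts to an isomorphism of cones $\tsigma_\Gamma \cong \sigma_\Gamma$. The map $\sigma_\Gamma\mapsto\tsigma_\Gamma$ is by construction a bijection of posets. Hence $\beta_\Gamma$ is a combinatorial equivalence in the sense of \Cref{def:combIso}, and \Cref{prop:combEquiv-implies-coarseModuli} gives at once that $\sX_\Gamma$ is DM with coarse moduli space $X_\Gamma$. The coarse moduli map comes from the square with $\beta_\Gamma$ on the left and $\id_{N_\Gamma}$ on the right, exactly as in \eqref{eq:coarseModuliMap}.

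\textbf{Functoriality.} For $\Gamma'\subseteq\Gamma$, GKZ theory gives $I_{\Gamma}\subseteq I_{\Gamma'}$, $L_\Gamma\subseteq L_{\Gamma'}$, and a projection $f_{\Gamma,\Gamma'}\colon N_\Gamma\to N_{\Gamma'}$. Each cone of $\Sigma_\Gamma$ maps into a cone of $\Sigma_{\Gamma'}$; this is what induces $X_\Gamma\to X_{\Gamma'}$. I would define $\Phi\colon \tN_\Gamma\to\tN_{\Gamma'}$ as the map induced by $\id_{\ZZ^n}$, which requires $\tL_\Gamma\subseteq \tL_{\Gamma'}$. To prove this containment, take a generator $\sum_{\rho\in\sigma_\Gamma(1)\cup J_\Gamma} a_\rho e_\rho$ with $\sum a_\rho f_\Gamma(u_\rho)=0$ and project by $f_{\Gamma,\Gamma'}$. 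The cone $\sigma_\Gamma$ lands in some $\sigma_{\Gamma'}$. Rays of $\sigma_\Gamma$ either remain rays of $\sigma_{\Gamma'}$, become zero (lie in $L_{\Gamma'}$), or are interior. Any index of $\sigma_\Gamma(1)\cup J_\Gamma$ that lies outside $\sigma_{\Gamma'}(1)\cup J_{\Gamma'}$ must therefore belong to $I_{\Gamma'}\setminus I_\Gamma$.

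This is the main obstacle. Those indices carry vectors $u_\rho$ that still lie in $\sigma_{\Gamma'}$ but are not rays of it. They must be rewritten as rational combinations of rays of $\sigma_{\Gamma'}$. Each such rewriting is itself a relation in $\tL_{\Gamma'}$, provided it is supported on a single cone. I would therefore show that $\tL_{\Gamma'}$ contains $e_\rho - \sum c_\rho^{\rho'} e_{\rho'}$ for every $\rho\in I_{\Gamma'}$ with $u_\rho\in\sigma_{\Gamma'}$. If this fails as literally defined, I would enlarge $\tN_\Gamma$ by quotienting out exactly these relations, which encode the extraneous rays, and check that nothing else changes. With this in hand, $\Phi$ exists, sends $\tsigma_\Gamma$ into $\tsigma_{\Gamma'}$, and commutes with the $\beta$'s. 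This gives a morphism of toric stack data in the sense of \Cref{def:morph-stackData}. Since $\Phi$ is induced from $\id_{\ZZ^n}$ and $f_{\Gamma,\Gamma'}$ from projections, compositions compose, so $\cF$ is a functor. The square with the coarse moduli maps commutes because it commutes on the lattice level. By the universal property of coarse moduli maps, the map $X_\Gamma\to X_{\Gamma'}$ induced on coarse spaces is the usual GKZ map.

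\textbf{Comparison with $\sXGC$.} The $\QQ$-Cartier datum of $X_\Gamma$ uses $\ZZ^{\Sigma_\Gamma(1)}/\tL$. Extending by the indices in $J_\Gamma$ and quotienting by $\tL_\Gamma$ gives a map of data $\sXGC\to\sX_\Gamma$ over $\id_{N_\Gamma}$. Both data are combinatorially equivalent to $\Sigma_\Gamma$ and have lattices of equal rank, so the induced map of lattices is injective with finite cokernel. The map of stacks is therefore finite, as in \Cref{rem:noKuwagaki}; the map $\sX_\Gamma\to X_\Gamma$ is finite by the same argument. When $\Gamma$ is in the moving cone, $I_\Gamma=\varnothing$ and every ray is used, so $J_\Gamma=\varnothing$ and $\tL_\Gamma=\tL$, giving $\sX_\Gamma=\sXGC$. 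Alternatively, this follows from \Cref{thm:quotientRealization}, since $S_\Gamma=\checkS$ in that case.
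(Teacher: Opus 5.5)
\textbf{The functoriality step fails.} It rests on the containment $I_\Gamma\subseteq I_{\Gamma'}$ for $\Gamma'\subseteq\Gamma$, and this is backwards. Specializing to a face can only make more of the defining inequalities tight, so in fact $I_{\Gamma'}\subseteq I_\Gamma$. For instance, in \Cref{fig:5pts-GKZcone} the ray labelled $1$ has $I=\varnothing$, while the chamber $\Sigma_{III}$ containing it has $I=\{2\}$.

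This has three consequences for your plan:
\begin{enumerate}
\item The set $I_{\Gamma'}\setminus I_\Gamma$ that you single out as the main obstacle is empty.
\item The statement you propose to prove to get around it is false by construction. Every generator of $\tL_{\Gamma'}$ is supported on $\Sigma_{\Gamma'}(1)\cup J_{\Gamma'}=I_{\Gamma'}^c$, so $\tL_{\Gamma'}$ contains no vector $e_\rho-\sum c_\rho^{\rho'}e_{\rho'}$ with $\rho\in I_{\Gamma'}$.
\item Your fallback of altering the lattices would change the stacks $\sX_\Gamma$ themselves, so it would not prove the theorem as stated.
\end{enumerate}

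The missing observation is where the troublesome indices actually go. Since $I_\Gamma^c\subseteq I_{\Gamma'}^c$, a ray of $\sigma_\Gamma$ that is not a ray of $\sigma_{\Gamma'}$ (its image becomes interior to $\sigma_{\Gamma'}$, or zero) lies in $J_{\Gamma'}$, never in $I_{\Gamma'}$. Note that a ray of $\Sigma_{\Gamma'}$ meeting $\sigma_{\Gamma'}$ in a nonzero vector is automatically a ray of $\sigma_{\Gamma'}$. Because $J_{\Gamma'}$ is admitted in every generator of $\tL_{\Gamma'}$, pushing the relation of a generator of $\tL_\Gamma$ forward along $f_{\Gamma,\Gamma'}$ shows that it lies in $\tL_{\Gamma'}$. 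Moreover, for such a ray $\rho$, the vector $e_\rho-\sum_{\rho'\in\sigma_{\Gamma'}(1)}c_{\rho'}e_{\rho'}$ with $c_{\rho'}\geq 0$ lies in $(\tL_{\Gamma'})_\QQ$ by definition. This is exactly what sends $\tsigma_\Gamma$ into $\tsigma_{\Gamma'}$, and it is why $J$ is built into the definition of $\tL_\Gamma$. The paper merely asserts that $f_{\Gamma'<\Gamma}$ lifts to $\tf_{\Gamma'<\Gamma}$. Once you correct the inclusion, your argument supplies the detail the paper leaves implicit.

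\textbf{Two claims in your last paragraph also need care.} The paper's own argument has the same weak points here. First, combinatorial equivalence with $\Sigma_\Gamma$ does not force $\tN$ and $\tN_\Gamma$ to have equal rank. Second, ``$\Gamma$ in the moving cone implies $J_\Gamma=\varnothing$'' fails on the boundary of the moving cone.

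Both fail at the wall $\Gamma$ of \Cref{ex:P113-H3-GKZcan}, which lies in the moving cone since it is a face of the nef cone of $\cH_r$. There $I_\Gamma=\varnothing$, while $J_\Gamma$ is the index of the contracted $(-r)$-curve. Writing that ray once in the cone of $\PP_v(1,1,r)$ containing it, and once as minus the opposite ray, gives two independent generators of $\tL_\Gamma$. Hence $\tL_\Gamma=\ker(\ZZ^4\to N)$, and $\sX_\Gamma=\PP_v(1,1,r)$, as drawn in \Cref{fig:P113-again}. By contrast, $\sXGC=\PP(1,1,r)$ has $\tN=\ZZ^3$. So at this wall the lattice map is surjective with nonzero kernel, not injective with finite cokernel, and $\sX_\Gamma\neq\sXGC$. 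The final equality should be restricted to cones with $I_\Gamma=J_\Gamma=\varnothing$.
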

\begin{proof}
 By construction, $\beta_\Gamma\colon \widetilde N_\Gamma \to N_\Gamma$ induces a combinatorial equivalence of fans.  
 Hence $\sXGamma$ is DM, and $\sXGamma \to X_\Gamma$ is a coarse moduli map by \Cref{prop:combEquiv-implies-coarseModuli}.

To see that the assignment is functorial, 
given a subcone $\Gamma' \subseteq \Gamma$ in $\Sigma_{GKZ}$, there is a containment $L_{\Gamma'} \subseteq L_{\Gamma}$, and there is a quotient map $f_{\Gamma' < \Gamma} \colon N_{\Gamma}\to N_{\Gamma'}$ that lifts to a quotient map $\tf_{\Gamma' < \Gamma}\colon \tN_\Gamma \to \tN_\Gamma'$.
Together these fit into the following commutative diagram:
\begin{equation}
\label{eq:GKZmap}
\begin{tikzcd}
& & \tN_\Gamma \ar[r, "{\beta_{\Gamma}}"] \ar[dll, swap, "\beta_\Gamma"] \ar[drr, swap, "{\widetilde{f}_{\Gamma' < \Gamma}}"] & N_\Gamma \ar[dll, "{\id}"], \ar[drr, "{f_{\Gamma' < \Gamma}}"]& & \\
N_\Gamma \ar[r, swap, "{\id}" {yshift=1.5pt}] \ar[drr, swap, "{f_{\Gamma}}"] & N_\Gamma \ar[drr, "{f_{\Gamma}}"] & & & \tN_{\Gamma'} \ar[r, swap, "{\beta_{\Gamma'}}" {yshift=1.5pt}] \ar[dll, swap, "{\beta_{\Gamma'}}"] & N_\Gamma. \ar[dll, "{\id}"] \\
& & N_{\Gamma'} \ar[r, swap, "{\id}"] & N_{\Gamma'} & &
\end{tikzcd}
\end{equation}
Since $\overline{\Sigma}_{\Gamma}$ is a refinement of $\overline{\Sigma}_{\Gamma'}$, $\tSigma_{\Gamma}$ is a refinement of $\tSigma_{\Gamma'}$. Therefore, \eqref{eq:GKZmap} induces a functorial morphism of toric stacks:
\[
\begin{tikzcd}
\cX_{\Gamma} \ar[r] \ar[d] & \cX_{\Gamma'} \ar[d] \\
X_{\Gamma} \ar[r] & X_{\Gamma'}.
\end{tikzcd}
\]

We now show that there are finite maps
$\sXGC \to \sX_\Gamma \to X_\Gamma$, 
where $\sXGC$ is the $\QQ$-Cartier stack associated to $\Gamma$. 
For this, since the fan $\overline \Sigma_\Gamma$ is supported on the cone of the rays $\rho_1, ..., \rho_n$, each $\rho' \in J_\Gamma$ can be written as a $\QQ$-linear combination of rays in $\overline \Sigma(1)$, say 
\[
u_\rho = \textstyle\sum_{\rho' \in \overline \Sigma_\Gamma(1)} a_{\rho'\rho}u_{\rho'}.
\]
These give elements
$e_\rho - \sum_{\rho' \in \overline \Sigma_\Gamma(1)} a_{\rho'\rho}e_{\rho'} \ \in \ \widetilde L_\Gamma$. 
Now let $\widetilde L$ be the sublattice of $\ZZ^{\Sigma_\Gamma(1)}$ from the $\QQ$-Cartier stack construction. 
Consider the inclusion of lattices
$
i\colon \widetilde L \to \widetilde  L_\Gamma
$
given by restricting the inclusion $\ZZ^{\Sigma_\Gamma(1)} \to \ZZ^{I_\Gamma^c}$.  
Tensoring with $\QQ$ yields the injective linear map $i_{\QQ}\colon (\widetilde L)_{\QQ} \to (\widetilde  L_\Gamma)_{\QQ}$.  
Notice that a basis for $\widetilde L_{\QQ}$ can be extended to a basis for $(\widetilde L_\Gamma)_{\QQ}$ using the set
$\left\{ e_\rho - \sum_{\rho' \in \overline \Sigma_\Gamma(1)} a_{\rho'\rho}e_{\rho'} \ \Big\vert\ \rho \in J_\Gamma\right\}$.
This shows that the cokernel of $i$ has rank $|J_\Gamma|$. 

Now consider the short exact sequence of short exact sequences
\small
\[
\begin{tikzcd}
& 0 \arrow[d, shorten >=2pt]  \ar[r]
    & 0 \arrow[d, shorten >=2pt] \arrow[r]
    & 0 \arrow[d, shorten >=2pt] 
    \\[-2ex] 
0 \arrow[r]
  & \widetilde L \arrow[d] \arrow[r]
  & \ZZ^{\Sigma_\Gamma(1)} \arrow[d] \arrow[r, "\beta"]
  & \widetilde N \arrow[d, dashed, "\psi"] \arrow[r]
  & 0 \\
0 \arrow[r]
  & \widetilde L_\Gamma \arrow[d] \arrow[r]
  & \ZZ^{I_\Gamma^c} \arrow[d] \arrow[r, "\beta_\Gamma"]
  & N_\Gamma \arrow[d] \arrow[r]
  & 0 \\
 0 \ar[r] & \coker i \arrow[r] \ar[d, shorten >=2pt] 
  & \ZZ^{J_\Gamma} \arrow[r] \ar[d, shorten >=2pt] 
  & \coker \psi \ar[r] \ar[d, shorten >=2pt] & 0 \\[-2ex] 
  & 0 \ar[r] & 0 \ar[r] & 0. &
\end{tikzcd}
\]
\normalsize
Since the rank of $\coker i$ is $|J_\Gamma|$, $\coker \psi$ is a finite group.  Thus $\psi\colon \widetilde N \to N_\Gamma$ is the inclusion of a finite index subgroup.  

Observe that there are combinatorial equivalences $\widetilde \Sigma \cong \Sigma_\Gamma$ (induced by $\beta_\Gamma$) and $\widetilde \Sigma_\Gamma \cong \Sigma_\Gamma$ (by definition), which induce the maps
$\sXGC \xrightarrow{\psi} \sX_\Gamma \to X_\Gamma$.  
We showed that $\psi$ is a finite, surjective map above, and the composition $\sXGC\to X_\Gamma$ is finite by \eqref{eq:stackDiagram}.  
Thus $\sX_\Gamma \to X_\Gamma$ is also a finite map, 
see \cite{stacks-project}*{Tags 01W0 and 02LS}. 

Finally, if $\Gamma$ is in the moving cone, then by \cite{CLSToricVarieties}*{Proposition 15.1.3}, the span of $\Gamma$ is equal to $\QCart(X)$.  Therefore $S_\Gamma = \checkS$, and hence 
\begin{align*}
\sXGC & = \left[\left(\Spec (\checkS) - V(\checkB)\right)/\checkG\right] & \text{ by \Cref{prop:canonicalDMstack-toric}}&\\
& =  \left[\left(\Spec (S_\Gamma) - V(B_\Gamma)\right)/G_\Gamma\right] 
 = \sX_\Gamma & \text{by \Cref{thm:quotientRealization}.} &\qedhere
\end{align*}
\end{proof}

\begin{cor}  
The closed subvarieties of $X_\Gamma$ are in bijection with radical $\wG_\Gamma$-homogeneous ideals $I \subseteq B_\Gamma \subseteq S_\Gamma$, 
where $\wG_\Gamma\ce\Hom_\ZZ(G_\Gamma,\CC^*)$. 
\end{cor}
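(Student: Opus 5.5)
The corollary follows by the same template as the stacky proof of \Cref{thm:nullstellensatz-var}: combine the quotient description of $\sXGamma$ with its coarse moduli map to $X_\Gamma$, then apply \Cref{cor:coarseModuli-bijection}.

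First, \Cref{thm:quotientRealization} gives an isomorphism of toric stacks $\sXGamma \cong [\Spec(S_\Gamma) - V(B_\Gamma)/G_\Gamma]$. Second, by Theorem C (via \Cref{prop:combEquiv-implies-coarseModuli}, since $\beta_\Gamma$ is a combinatorial equivalence between $\tSigma_\Gamma$ and $\Sigma_\Gamma$), the map $\sXGamma \to X_\Gamma$ is a coarse moduli map. Together these show that $[Y_\Gamma/G_\Gamma] \to X_\Gamma$ is a coarse moduli map, where
\[
Y_\Gamma \ce \Spec(S_\Gamma) - V(B_\Gamma).
\]
We are then exactly in the setting of \Cref{cor:coarseModuli-bijection}, with $R = S_\Gamma$, $J = B_\Gamma$ and $H = G_\Gamma$.

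The main point to check is that $G_\Gamma = \ker(\beta_\Gamma \otimes_\ZZ \CC^*)$ is a diagonalizable group. It is a closed subgroup of the torus $\tN_\Gamma \otimes \CC^*$, and closed subgroups of tori are diagonalizable. Its character group $\wG_\Gamma$ is then a quotient of $\tN_\Gamma^\vee$, namely the cokernel of the dual map $\beta_\Gamma^\vee$. We also need the $\wG_\Gamma$-grading induced by this action to be the grading on the monoid algebra $S_\Gamma$ coming from the toric structure, so that ``homogeneous'' means the same thing in both places. This holds because the $G_\Gamma$-action on $X_{\tSigma_\Gamma}$ is the restriction of the torus action.

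The one step needing some care is that \Cref{cor:coarseModuli-bijection} produces a bijection with homogeneous ideals $I \subseteq B_\Gamma$, whereas the statement asks for \emph{radical} ideals when matching closed subvarieties. We handle this exactly as in the proof of \Cref{thm:nullstellensatz-var}, using three facts:
\begin{itemize}
\item Closed subsets of $X_\Gamma$ correspond to $G_\Gamma$-invariant closed subsets of $Y_\Gamma$, because a coarse moduli map is a geometric quotient (\Cref{prop:geometricquotientClosedSub}).
\item Closed subsets of $Y_\Gamma$ correspond to closed subsets of $\Spec(S_\Gamma)$ containing $V(B_\Gamma)$, and these correspond to radical ideals contained in $\sqrt{B_\Gamma}$. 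Here $B_\Gamma$ is radical, being generated by squarefree-support monomials intersected with a normal monoid algebra, so these are radical ideals $I \subseteq B_\Gamma$.
\item $G_\Gamma$-invariance of a closed subset is equivalent to $\wG_\Gamma$-homogeneity of its radical ideal.
\end{itemize}
If $B_\Gamma$ turned out not to be radical, we would instead state the correspondence with radical ideals contained in $\sqrt{B_\Gamma}$.
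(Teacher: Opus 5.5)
Your proposal is correct and is essentially the paper's proof, which likewise combines Theorem~C (the coarse moduli map $\sXGamma\to X_\Gamma$) with \Cref{thm:quotientRealization} and then applies \Cref{cor:coarseModuli-bijection}. Your extra checks, namely diagonalizability of $G_\Gamma$ and re-running the argument of Theorem~A to obtain the word ``radical'' (which \Cref{cor:coarseModuli-bijection} as stated omits), make explicit what the paper leaves implicit. Your stated reason for $B_\Gamma$ being radical is muddled; it is radical simply because it is the contraction to the subring $S_\Gamma$ of the squarefree monomial ideal $B(\Sigma_\Gamma,I_\Gamma)$, and a contraction of a radical ideal is radical.
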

\begin{proof}
By \Cref{thm:mainGKZstacky} and \Cref{thm:quotientRealization}, $\Spec (S_\Gamma) - V(B_\Gamma) \to X_\Gamma$ is a coarse moduli map, so the result follows from \Cref{cor:coarseModuli-bijection}.
\end{proof}

\begin{cor}\label{cor:nullstellensatz-GKZ} 
Let $\Gamma$ be a cone in a GKZ fan. 
\begin{enumerate}[noitemsep] 
\item 
Every ideal sheaf on $\sXGamma$ has the form $I|_{\sXGamma}$ for some $I\subseteq S_\Gamma$, and $I|_{\sXGamma} = I'|_{\sXGamma}$ if and only if $I\colon B_\Gamma^\infty = I'\colon  B_\Gamma^\infty$.
Furthermore, $G_\Gamma$-invariant closed subschemes of $\Spec (S_\Gamma) - V(B_\Gamma)$ are in bijection with $B_\Gamma$-saturated homogeneous ideals $I \subseteq S_\Gamma$.
\item For every coherent sheaf $\cF$ on  $\sXGamma$, there is a finitely generated, multigraded $S_\Gamma$-module $M$, such that $\cF = M|_{\sXGamma}$.  Moreover, $M|_{\sXGamma}=0$ if and only if $B_\Gamma^\ell M=0$ for some $\ell$.
\end{enumerate}
\end{cor}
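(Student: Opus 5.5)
The plan is to obtain \Cref{cor:nullstellensatz-GKZ} as a direct specialization of \Cref{cor:stackyNullstellensatz}, in the same way that \Cref{thm:nullstellensatz-stack} was obtained for $\sXCar$. The only real input is the global quotient presentation of $\sXGamma$ given by \Cref{thm:quotientRealization}:
\[
\sXGamma \cong [\Spec(S_\Gamma) - V(B_\Gamma)/G_\Gamma].
\]
With this in hand, we apply \Cref{cor:stackyNullstellensatz} with the following substitutions:
\begin{itemize}
\item $R = S_\Gamma$;
\item $J = B_\Gamma$;
\item $H = G_\Gamma = \ker(\beta_\Gamma\otimes_\ZZ \CC^*)$;
\item $Y = \Spec(S_\Gamma) - V(B_\Gamma)$.
\end{itemize}
Parts (1) and (2) of \Cref{cor:nullstellensatz-GKZ} are then literally parts (1) and (2) of \Cref{cor:stackyNullstellensatz} in this notation.

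Before invoking \Cref{cor:stackyNullstellensatz}, we check its hypotheses.

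First, $G_\Gamma$ must be a diagonalizable algebraic group. It is the kernel of a homomorphism of tori, so it is a diagonalizable group: a torus times a finite abelian group, possibly with torsion since $\beta_\Gamma$ need not have torsion-free cokernel on the dual side. Its character group is $\wG_\Gamma = \coker(\beta_\Gamma^\vee)$.

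Second, the grading on $S_\Gamma$ induced by the $G_\Gamma$-action must be the natural $\wG_\Gamma$-grading. The monomial ring $S_\Gamma = \CC[\mu_\RR^{-1}(\Span\Gamma)\cap\NN^n]$ is the coordinate ring of the affine toric variety $X_{|\tSigma_\Gamma|}$, as shown in the proof of \Cref{thm:quotientRealization}. The subtorus $G_\Gamma \subseteq T_{\tN_\Gamma}$ acts through characters, so each monomial is an eigenvector, and the degree of a monomial is its image in $\wG_\Gamma$. Hence "homogeneous" in the statement means $\wG_\Gamma$-homogeneous, as required.

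Third, $B_\Gamma$ must be homogeneous. It is generated by monomials, namely intersections of monomial ideals with the monomial subring $S_\Gamma$, so it is homogeneous.

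Finally, we need the notion of the sheaves $I|_{\sXGamma}$ and $M|_{\sXGamma}$. These are defined through the equivalence of \Cref{prop:equivSheaves-gradedMods-final}, transported along the isomorphism of \Cref{thm:quotientRealization}.

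The only step needing any care is the compatibility of gradings in the second hypothesis. We need the $G_\Gamma$-action on $\Spec(S_\Gamma)$ used in \Cref{thm:quotientRealization} to be the restriction of the torus action of $T_{\tN_\Gamma}$. This holds by the definition of the toric stack datum in \Cref{def:stackDatum}, so we just record it. Finite generation of $B_\Gamma$, needed for the $J^\ell M = 0$ formulation, follows because $S_\Gamma$ is a finitely generated affine semigroup ring and hence Noetherian.
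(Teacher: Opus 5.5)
Your proposal is correct and is essentially the paper's own proof, which consists of the single observation that the statement follows from the presentation $\sXGamma \cong [\Spec(S_\Gamma) - V(B_\Gamma)/G_\Gamma]$ of \Cref{thm:quotientRealization} together with \Cref{cor:stackyNullstellensatz}; your hypothesis checks make that specialization explicit. One harmless aside: in the GKZ setup $\beta$ and $f_\Gamma$ are surjective, so $\beta_\Gamma$ is surjective and $G_\Gamma$ is actually a torus with no finite factor, though diagonalizability is all you need.
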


\begin{proof}
This follows from \Cref{thm:quotientRealization} and \Cref{cor:stackyNullstellensatz}.
\end{proof}

\begin{cor}\label{cor:SigmaGammaSimplicialCase}
If $\Gamma \in \Sigma_{\text{GKZ}}$ is a maximal cone, i.e., a chamber, then   
$\sX_{\Gamma,\mathrm{can}} = \sX_{\Gamma,\QCart} = \sX_\Gamma$. 
\end{cor}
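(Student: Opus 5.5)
The plan is to use the standard fact from GKZ theory that for a chamber $\Gamma$ (a full-dimensional cone of $\Sigma_{GKZ}$), the generalized fan $\overline\Sigma_\Gamma$ is a simplicial fan with trivial lineality space. Its toric variety $X_\Gamma$ is therefore simplicial, and this supplies both equalities in the statement. Concretely, by \cite{CLSToricVarieties}*{Proposition 15.1.3 and Theorem 14.4.7}, the interior of a chamber consists of characters that are ample, so they are Cartier on $X_\Gamma$ up to a multiple. Moreover, genericity of a chamber forces every maximal cone of $\Sigma_\Gamma$ to be simplicial, with $L_\Gamma=0$ and $N_\Gamma=N$.

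\textbf{Step 1.} Since $X_\Gamma$ is simplicial, every Weil divisor is $\QQ$-Cartier by \cite{CLSToricVarieties}*{Proposition 4.2.7}. Hence $\checkS=S$, $\checkB=B$ and $\checkG=G$ for $X_\Gamma$, which gives $\sX_{\Gamma,\mathrm{can}}=\sX_{\Gamma,\QCart}$. Equivalently, by \Cref{ex:canonicalSimpicial}, $\tL=0$ in \Cref{def:toricStackDatum}.

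\textbf{Step 2.} To compare with $\sX_\Gamma$, I would invoke the last assertion of \Cref{thm:mainGKZstacky}. A chamber lies in the moving cone precisely when $I_\Gamma=\varnothing$, and in that case $\sX_\Gamma=\sXGC$ directly. For chambers outside the moving cone, where $I_\Gamma\neq\varnothing$, I would instead argue from the toric stack data. Here $J_\Gamma=\varnothing$, because for a chamber every index lies in $\Sigma_\Gamma(1)\cup I_\Gamma$: each non-ray index is forced into $I_\Gamma$ by genericity. Since each cone of $\Sigma_\Gamma$ is simplicial and $L_\Gamma=0$, the lattice $\tL_\Gamma$ is generated by relations among the linearly independent $u_\rho$ with $\rho\in\sigma(1)$, so $\tL_\Gamma=0$ inside $\ZZ^{\Sigma_\Gamma(1)}$. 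One caveat: with $J_\Gamma=\varnothing$, I would read $\tN_\Gamma$ as $\ZZ^{I_\Gamma^c}/\tL_\Gamma$, as in the proof of \Cref{thm:mainGKZstacky}.

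It follows that $\tN_\Gamma=\ZZ^{\Sigma_\Gamma(1)}$ and that $\tSigma_\Gamma$ is the standard lift. This is exactly the toric stack datum of the canonical stack, so $\sX_\Gamma=\sX_{\Gamma,\mathrm{can}}$. In the proof of \Cref{thm:mainGKZstacky} this amounts to $\coker i$ having rank $|J_\Gamma|=0$ with $\tL=\tL_\Gamma=0$, so that $\psi$ is the identity.

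\textbf{Main obstacle.} The hard part is justifying the combinatorial input: for a chamber, $\Sigma_\Gamma$ is simplicial, $L_\Gamma=0$, and $J_\Gamma=\varnothing$. I would derive this from the description of chambers as the cones whose interiors avoid all walls spanned by fewer than $\operatorname{rank}\wG$ degree vectors. For $\alpha$ generic, the GIT quotient has only finite stabilizers, so its fan is simplicial. A character in the interior of the chamber also cannot lie on the cone spanned by $\{\mu(e_i)\}_{i\neq j}$ without $j$ being either a ray or in $I_\Gamma$, which gives $J_\Gamma=\varnothing$. I would cite \cite{CLSToricVarieties}*{Proposition 15.1.6 / Theorem 14.4.7} for these facts rather than reprove them.
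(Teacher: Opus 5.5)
Your proposal is correct, but it identifies $\sX_\Gamma$ with $\sX_{\Gamma,\mathrm{can}}$ by a different route than the paper.

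\textbf{The paper's route.} It works on the quotient side. A chamber is full-dimensional, so $\mu_\RR^{-1}(\Span\Gamma)=\RR^n$ and hence $S_\Gamma=S$. \Cref{thm:quotientRealization} then presents $\sX_\Gamma$ as the GIT quotient stack $[\Spec(S)-V(B_\Gamma)/G_\Gamma]$. The paper cites \cite{FK}*{Corollary 4.23} for the fact that this GIT stack is the canonical (Cox) stack of $X_\Gamma$. The remaining equality with $\sX_{\Gamma,\QCart}$ comes from simpliciality, exactly as in your Step 1.

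\textbf{Your route.} You work on the stack-datum side and show directly that $\tL_\Gamma=0$, using $\Sigma_\Gamma$ simplicial, $L_\Gamma=0$ and $J_\Gamma=\varnothing$. This avoids \Cref{thm:quotientRealization} and the external citation. It also makes explicit what the paper uses silently; for instance, $G_\Gamma$ is the full GIT group only because $\tL_\Gamma=0$. The cost is that you must supply those chamber facts, which you reasonably propose to cite from \cite{CLSToricVarieties}.

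\textbf{Two small remarks.}

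First, your case split is unnecessary. The datum computation never uses $I_\Gamma\neq\varnothing$, so it handles every chamber. The moving-cone branch via \Cref{thm:mainGKZstacky} is essentially the paper's own route.

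Second, about your caveat: the literal definition $\tN_\Gamma=\ZZ^n/\tL_\Gamma$ is the one compatible with \Cref{thm:quotientRealization}, where $\Spec(S_\Gamma)-V(B_\Gamma)$ carries a factor $(\CC^*)^{I_\Gamma}$. With that definition you get $\tN_\Gamma=\ZZ^n$, not $\ZZ^{\Sigma_\Gamma(1)}$, so the data are not literally the canonical ones. One more line closes this.
\begin{itemize}
\item[(a)] We have $X_{\tSigma_\Gamma}=(\AA^{\Sigma_\Gamma(1)}-V(B))\times(\CC^*)^{I_\Gamma}$.
\item[(b)] $G_\Gamma$ surjects onto $(\CC^*)^{I_\Gamma}$. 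This holds because the rays of $\Sigma_\Gamma$ span $N_\RR$, so $\beta(\ZZ^{\Sigma_\Gamma(1)})$ has finite index in $N$, and $\CC^*$ is divisible.
\item[(c)] The kernel of this surjection is $\ker\bigl((\CC^*)^{\Sigma_\Gamma(1)}\to T_N\bigr)=\Hom(\Cl(X_\Gamma),\CC^*)$.
\end{itemize}
Hence the two quotient stacks coincide. This absorption of the torus factor is precisely what the paper outsources to \cite{FK}.
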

\begin{proof}
Since $\Gamma$ is a chamber, $S_\Gamma = S$, and there is a usual toric stack associated to the GIT quotient.  
This is equivalent to the canonical toric stack obtained from the Cox construction, see, e.g., \cite{FK}*{Corollary 4.23}. 
Thus by \Cref{thm:quotientRealization}, $\sX_{\Gamma,\mathrm{can}} = \sX_\Gamma$.  
On the other hand, 
$\sX_{\Gamma,\mathrm{can}} = \sX_{\Gamma,\QCart}$ because  $\Sigma_\Gamma$ is simplicial.
\end{proof}

\begin{wrapfigure}[15]{l}{0.45\textwidth}
\raggedright
\vspace*{-4.5mm}
\begin{center}
\begin{tikzpicture}[scale=2.6]
\node at (-.05,-.05) {0};
\node at (1.05,-.05) {1};
\node at (-.05,1) {2};
\node at (-.05,2.05) {3};
\node at (1.05,2.05) {4};
\node at (.57,1) {5};
\node at (1/3,.55) {6};
\node at (1/3,1.42) {7};
\draw (0,0) -- (1,0);
\draw (0,0) -- (0,2);
\draw (0,0) -- (1,2);
\draw (0,2) -- (1,2);
\draw (0,2) -- (1,0);
\draw (0,1) -- (1,2);
\draw (0,1) -- (1,0);
\draw (1,0) -- (1,2);
\draw (0,1) -- (1/2,1);
\end{tikzpicture}
\end{center}
\captionsetup{width=0.95\linewidth, justification=raggedright}
\vspace*{-4.5mm}
\caption{The chosen common simplicial refinement in \Cref{ex:5points--wsX}.}
\label{fig:5pts-Lambda}
\end{wrapfigure}

In \cite{ballard2024king}, for the chambers $\Gamma\in\Sigma_{GKZ}$, a common toric stacky refinement $\widetilde{\sX}$ of all the $\cX_\Gamma$ was explicitly described as follows.  First, choose a common simplicial refinement $\Lambda$ of the collection of fans $\{\Sigma_\Gamma\}_\Gamma$. 
Now, since each $X_\Gamma$ is simplicial, any ray $\rho \in \Lambda$ lies in a unique minimal simplical cone $\sigma_{\rho, \Gamma} \in \Sigma_\Gamma$, and hence there is a relation
\[
\hspace*{8cm}
a_{\rho,\Gamma} u_\rho = \sum_{\tau \in \sigma_{\rho,\Gamma}(1)} a_{\tau} u_\tau,
\]
where $a_{\rho,\Gamma}, a_{\tau} \in \ZZ_{>0}$ and $\gcd(a_{\rho,\Gamma}, a_{\tau}) = 1$.  
Let $c_\rho \ce \operatorname{lcm}(a_{\rho,\Gamma})$, and define
\begin{align*}
\hspace*{8cm}
\beta_\Lambda\colon \ZZ^{\Sigma(1)} & \to N \\
e_\rho & \mapsto c_\rho u_\rho.
\end{align*}
This gives a toric stacky datum $(\ZZ^{\Sigma(1)}, N, \Lambda, \beta_{\Lambda})$.  Following \cite{ballard2024king}, define the following.
\begin{defn}
\label{def:tsX}
Let $\tsX$ be the toric stack from 
$(\ZZ^{\Sigma(1)}, N, \Lambda, \beta_{\Lambda})$,
where $\beta_{\Lambda}(e_\rho) \ce c_\rho u_\rho$.
\end{defn}

\begin{example}\label{ex:5points--wsX}
Returning to \cref{ex:5points}, there are two choices for $\Lambda$. Select the one shown in Figure~\ref{fig:5pts-Lambda}, which has the cone spanned by the rays with labels $2$ and $5$, as opposed to those labeled $6$ and $7$.
In this case,
\[
\begin{tikzcd}
\ZZ^{\Lambda(1)} \ar[r, "\beta_{\Lambda}"] \ar[d, "\alpha_i"] & N \ar[d, "id"]\\
\ZZ^5 \ar[r, "\beta_i"] & N,
\end{tikzcd}
\]
where $\beta_\Lambda = \left[\begin{smallmatrix}1&1&2&1&1&4&3&3\\
      0&1&0&0&1&2&1&1\\
      0&0&2&2&2&4&2&4      
      \end{smallmatrix}\right]$, 
$\beta=\phi^T = \left[\begin{smallmatrix} 1&1&1&1&1 \\0&1&0&0&1\\ 0&0&1&2&2 \end{smallmatrix}\right]$, 
and the following values for $\alpha_i$: 
\begin{center}
$\alpha_{I} = \left[\begin{smallmatrix}
      1&0&0&0&0&0&0&0\\
      0&1&0&0&0&1&1&0\\
      0&0&2&0&0&2&2&2\\
      0&0&0&1&0&0&0&0\\
      0&0&0&0&1&1&0&1
      \end{smallmatrix}\right]$,
\qquad 
$\alpha_{II} = \left[\begin{smallmatrix}
      1&0&0&0&0&2&2&0\\
      0&1&0&0&0&0&0&0\\
      0&0&2&0&0&0&0&2\\
      0&0&0&1&0&0&0&0\\
      0&0&0&0&1&2&1&1
      \end{smallmatrix}\right]$,
\qquad 
$\alpha_{III} = \left[\begin{smallmatrix}
      1&0&1&0&0&2&2&1\\
      0&1&0&0&0&0&0&0\\
      0&0&0&0&0&0&0&0\\
      0&0&1&1&0&0&0&1\\
      0&0&0&0&1&2&1&1
      \end{smallmatrix}\right]$, 
\end{center}
\begin{center}
$\alpha_{IV} = \left[\begin{smallmatrix}
      1&0&2&0&0&0&1&0\\
      0&1&0&0&0&2&1&1\\
      0&0&0&0&0&0&0&0\\
      0&0&2&1&0&2&1&2\\
      0&0&0&0&1&0&0&0
      \end{smallmatrix}\right]$, 
\qquad\text{and}\qquad
$\alpha_{V} = \left[\begin{smallmatrix}
      1&0&0&0&0&0&0&0\\
      0&1&0&0&0&2&1&1\\
      0&0&2&0&0&0&2&0\\
      0&0&0&1&0&2&0&2\\
      0&0&0&0&1&0&0&0
      \end{smallmatrix}\right]$.
\end{center}

\noindent
This data follows from the values of 
$a_{\rho,i}=1$ for all $\rho$ and $i$ except for $
a_{2,III}=a_{2,IV}=a_{5,I}=2$, 
so that $c_1 = c_3 = c_4 = 1$ and $c_2 = c_5 = 2$.
\end{example}

\begin{prop}\label{prop:DCox}
   Any choice of $\widetilde{\sX}$ as in \cite{ballard2024king} comes with maps $\alpha_\Gamma\colon \widetilde{\sX} \to \sXGamma$ such that the following diagram commutes:
   \[
   \begin{tikzcd}
       \widetilde{\sX} \ar[r, "{\alpha_\Gamma}"] \ar[dr, "{\alpha_{\Gamma'}}"'] & \sXGamma \ar[d] \\
       & \sX_{\Gamma'}. 
   \end{tikzcd}
   \]
   That is, the toric stack $\widetilde{\sX}$ is a cone for the GKZ diagram $\cF$ in toric stacks.
\end{prop}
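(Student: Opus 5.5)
The plan is to realize each $\alpha_\Gamma$ as a morphism of toric stack data, in the sense of \Cref{def:morph-stackData}, from $(\ZZ^{\Lambda(1)}, N, \Lambda, \beta_\Lambda)$ to $(\tN_\Gamma, N_\Gamma, \tSigma_\Gamma, \beta_\Gamma)$. The map on the right-hand lattices is $f_\Gamma\colon N\to N_\Gamma$. Morphisms of toric stack data induce morphisms of toric stacks functorially. So once the lattice maps are in place, commutativity of the triangle reduces to an identity of lattice maps, namely $\widetilde{f}_{\Gamma'<\Gamma}\circ\Phi_\Gamma=\Phi_{\Gamma'}$, where $\widetilde{f}_{\Gamma'<\Gamma}$ is the map from \eqref{eq:GKZmap}.

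\emph{Step 1: chambers.} Let $\Gamma$ be a chamber. Each ray $\rho\in\Lambda(1)$ lies in a unique minimal simplicial cone $\sigma_{\rho,\Gamma}\in\Sigma_\Gamma$, with $a_{\rho,\Gamma}u_\rho=\sum_{\tau} a_\tau u_\tau$. Since $a_{\rho,\Gamma}$ divides $c_\rho$, the vector $c_\rho u_\rho$ is a nonnegative integral combination $\sum_\tau b_{\rho\tau}u_\tau$ of the rays $u_\tau$, $\tau\in\sigma_{\rho,\Gamma}(1)$. I will set $\Phi_\Gamma(e_\rho)\ce\sum_\tau b_{\rho\tau}\,\tf_\Gamma(e_\tau)\in\tN_\Gamma$. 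The identity $\beta_\Gamma\circ\Phi_\Gamma=f_\Gamma\circ\beta_\Lambda$ then holds on generators by construction. For compatibility of fans, use that $\Lambda$ refines $\Sigma_\Gamma$: each cone $\lambda\in\Lambda$ lies in some $\sigma\in\Sigma_\Gamma$. All rays of $\lambda$ then have minimal cones that are faces of $\sigma$, so $\Phi_\Gamma(\lambda)\subseteq\tsigma$.

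\emph{Step 2: arbitrary cones.} For a non-maximal $\Gamma$, choose a chamber $\Gamma_0\supseteq\Gamma$ and define $\Phi_\Gamma\ce\tf_{\Gamma<\Gamma_0}\circ\Phi_{\Gamma_0}$. Fan compatibility follows because $\tSigma_{\Gamma_0}$ refines $\tSigma_\Gamma$ and $\tf_{\Gamma<\Gamma_0}$ is a map of fans, as shown in the proof of \Cref{thm:mainGKZstacky}. The commutativity $\beta_\Gamma\Phi_\Gamma=f_\Gamma\beta_\Lambda$ comes from pasting the squares in \eqref{eq:GKZmap}. Including this step means the same uniqueness argument below also applies to non-chamber $\Gamma$, where $\Sigma_\Gamma$ may be nonsimplicial and no direct formula like Step 1 is available.

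\emph{Step 3: the key uniqueness lemma.} Fix a cone $\sigma\in\Sigma_\Gamma$. Because $\beta_\Gamma$ is a combinatorial equivalence (\Cref{defn:goodStacky} and the lemma preceding it), $\beta_\Gamma$ restricted to $\Span_\RR\tsigma$ is injective. Hence an element of $\tN_\Gamma$ lying in $\Span\tsigma$ is determined by its image in $N_\Gamma$. Now take any two candidate lifts of $e_\rho$, for example $\Phi_{\Gamma}(e_\rho)$ built from two different chambers, or $\widetilde{f}_{\Gamma'<\Gamma}\Phi_\Gamma(e_\rho)$ versus $\Phi_{\Gamma'}(e_\rho)$. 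In each case both lie in the lifted cone $\tsigma$, where $\sigma$ is the minimal cone of $\Sigma_{\Gamma'}$ containing $f_{\Gamma'}(u_\rho)$, and both map to $f_{\Gamma'}(c_\rho u_\rho)$. So they coincide. This shows $\Phi_\Gamma$ is independent of the chamber chosen, and it gives $\widetilde{f}_{\Gamma'<\Gamma}\circ\Phi_\Gamma=\Phi_{\Gamma'}$, hence the commuting triangle.

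I expect the main obstacle to be the bookkeeping in Step 3. One must verify that the image of $\tsigma_{\Gamma_0}$ under $\tf_{\Gamma<\Gamma_0}$ really lands inside the single lifted cone $\tsigma$, including for rays indexed by $J_\Gamma$ or $I_\Gamma$ that get collapsed. Only then does the injectivity of $\beta_\Gamma$ on $\Span\tsigma$ apply. Integrality in Step 1 (that $a_{\rho,\Gamma}$ divides $c_\rho$) is immediate from the definition of $c_\rho$ as a least common multiple.
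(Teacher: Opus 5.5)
Your proposal is correct and is essentially the paper's argument. The paper defines $\alpha_{\Gamma_i}$ on each chamber by $e_\rho\mapsto \frac{c_\rho}{a_{\rho,\Gamma_i}}\sum_{\tau\in\sigma_{\rho,\Gamma_i}(1)} a_\tau e_\tau$, reduces to cones $\Gamma\subseteq\Gamma_i\cap\Gamma_j$, and checks $\tf_\Gamma\circ\alpha_{\Gamma_i}=\tf_\Gamma\circ\alpha_{\Gamma_j}$ by noting that the difference of the two lifts of $c_\rho u_\rho$ lies in $\tL_\Gamma$, which is your injectivity of $\beta_\Gamma$ on $\Span\tsigma$ seen from the $\ZZ^n$ side; the bookkeeping you flag is harmless, because a chamber ray never lands in $I_\Gamma$ for a face $\Gamma$ (whether the hyperplane $\langle m,u_\tau\rangle=-a_\tau$ meets the polyhedron is a closed condition on $a$), so every $\tau\in\sigma_{\rho,\Gamma_0}(1)$ is a ray of $\Sigma_\Gamma$ or lies in $J_\Gamma$, and all such $u_\tau$ lie in the minimal cone of $\overline{\Sigma}_\Gamma$ containing $u_\rho$.
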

\begin{proof}
This amounts to showing that if $\Gamma \subseteq \Gamma_i \cap \Gamma_j$ is a cone in $\Sigma_{GKZ}$ that is the intersection of two chambers, then there is a commutative diagram of toric stacks: 
\[
\begin{tikzcd}
& \tsX \ar[dl, swap, "{\alpha_i}"] \ar[dr, "{\alpha_j}"]& \\
\sX_{\Gamma_i} \ar[dr] & & \sX_{\Gamma_j}. \ar[dl]\\
& \sX_{\Gamma} &
\end{tikzcd}
\]
This amounts to commutativity of the following diagram of complexes of abelian groups: 
\[
\begin{tikzcd}
& & \ZZ^{\Lambda(1)} \ar[r, "{\beta_{\Lambda}}"] \ar[dll, swap, "\alpha_{\Gamma_i}"] \ar[drr, swap, "{\alpha_{\Gamma j}}"] & N \ar[dll, "{id}"], \ar[drr, "{id}"]& & \\
\ZZ^5 \ar[r, swap, "{\beta_{\Gamma_i}}"] \ar[drr, swap, "{\tf_{\Gamma}}"] & N \ar[drr, "{f_{\Gamma}}"] & & & \ZZ^5 \ar[r, swap, "{\beta_{\Gamma_j}}"] \ar[dll, swap, "{\tf_{\Gamma}}"] & N. \ar[dll, "{f_{\Gamma}}"] \\
& & \tN_{\Gamma} \ar[r, swap, "{\beta_{\Gamma}}"] & N_{\Gamma} & &
\end{tikzcd}
\]
The individual squares commute as morphisms of toric stacks, so it is left to check is that $\tf_{\Gamma} \circ \alpha_{\Gamma_i} = \tf_{\Gamma} \circ \alpha_{\Gamma_j}$.  
This holds because 
\begin{align*}
   \tf_{\Gamma} \circ \alpha_{\Gamma_i}(e_\rho) & = \frac{c_\rho}{a_{\rho,\Gamma_i}} \sum_{\tau \in \sigma_{\rho,\Gamma_i}(1)} a_\tau \tf_{\Gamma}(e_\tau) & \\
  & = \frac{c_\rho}{b_{\rho,\Gamma_j}} \sum_{\tau' \in \sigma_{\rho,\Gamma_j}(1)} b_{\tau'} \tf_{\Gamma}(e_{\tau'}) & \text{ since their difference is in the $\tL_\Gamma$,} \\
  & = \tf_{\Gamma} \circ \alpha_j(e_\rho).
  &&\qedhere
  \end{align*}
\end{proof}
\begin{example} \label{ex:hirzCones}
Returning to \Cref{ex:P113-H3-GKZcan}, as observed therein, the toric birational morphism $\cH_r \to \PP_v(1, 1, r)$ is given by contracting the $-r$-curve, which does not lift to a map of canonical stacks (which are the same thing as $\QQ$-Cartier stacks since these toric varieties are simplicial).   Instead, the birational correspondence comes from the root stack $\tsX$ of order $r$ over this $-r$-curve, which can be obtained as the toric stack coming from the map 
$\beta\colon \ZZ^4 \to N$ given by $\beta(e_i)=u_i$ for $i\ne 1$ and $\beta(e_1)=ru_1$. 
It also happens to be the fiber product of $\PP(1, 1, r)$ and $\cH_r$ over $\PP_v(1, 1, r)$.  Figure~\ref{fig:P113-again} illustrates how functoriality is restored and that the root stack $\widetilde \sX$ is a cone for the stacky Hirzebruch GKZ diagram.
\end{example}

\begin{figure}[htpb]
    \centering
    \begin{minipage}[c]{0.42\textwidth}   
        \centering
        \begin{tikzpicture}[scale=0.85, baseline=(current bounding box.center)]
            \draw[step=1cm, gray!40, very thin] (0,2) grid (7,7);
            
            \fill[gray!15] (4,3) -- (0,13/3) -- (0,7) -- (4,7) -- cycle;
            \fill[gray!50] (4,3) -- (4,7) -- (7,7) -- (7,3) -- cycle;
            
            \draw[thick, black!60] (0,13/3) -- (0,7) -- (7,7) -- (7,3);
            
            \draw[->, blue!80!black, ultra thick, dotted] (4,3) -- (0,13/3); 
            \draw[->, blue!80!black, ultra thick] (4,3) -- (4,7);             
            \draw[->, blue!80!black, ultra thick, dashed] (4,3) -- (7,3);      
            
            \fill[black!60] (4,3) circle (4pt);
            
            \node at (2.2, 5.3) {$\mathbf{\mathsf{\PP}(1, 1, 3)}$};
            \node at (5.5, 5.3) {$\mathbf{\mathsf{\cH_3}}$};
        \end{tikzpicture}
    \end{minipage}%
    \hfill
    \begin{minipage}[c]{0.58\textwidth}
        \centering
        \begin{tikzcd}[row sep=3em, column sep={4em,between origins}]
            & & \tsX \arrow[dl] \arrow[dr] & & \\
            & |[fill=gray!15, rounded corners, inner sep=4pt]| \PP(1, 1, 3) \arrow[dl] \arrow[dr] & & 
              |[fill=gray!50, rounded corners, inner sep=4pt]| \cH_3 \arrow[dl] \arrow[dr] & \\
            |[draw=blue!80!black, ultra thick, dotted, rounded corners, inner sep=4pt]| \Spec(\CC) \arrow[drr] & & 
              |[draw=blue!80!black, ultra thick, rounded corners, inner sep=4pt]| \PP_v(1, 1, 3) \arrow[d] & & 
              |[draw=blue!80!black, ultra thick, dashed, rounded corners, inner sep=4pt]| \PP^1 \arrow[dll] \\
            & & |[fill=black!60, text=white, rounded corners, inner sep=4pt]| \Spec(\CC) & &
        \end{tikzcd}
    \end{minipage}
    \caption{The $\sXGamma$ correction for the failure of functoriality in Figure~\ref{fig:P113-GKZfan-var}.}
    \label{fig:P113-again}
\end{figure}

\subsection*{Acknowledgments}
We are grateful to Isidora Bailly-Hall, Matthew Ballard, Michael K. Brown, Lauren Cranton Heller, Sheel Ganatra, Andrew Hanlon, Jesse Huang, Mykola Sapronov, Hal Schenck, and Gregory G.\ Smith for useful conversations. The authors benefited immensely from joint FRG support through NSF DMS 2412039--44. 
C. Berkesch was also supported by the Simons Foundation, 
D. Erman by NSF DMS 2246961 and 2200469, and D. Favero by NSF DMS 2302262.  

\bibliographystyle{amsalpha}
\bibliography{refs}

\end{document}